\documentclass[twoside,a4paper,11pt]{article}
\usepackage{amsfonts, amsbsy, amsmath, amsthm, amssymb, latexsym, verbatim, enumerate}


\textheight=610pt
\textwidth=380pt
\oddsidemargin=75pt
\evensidemargin=75pt
\usepackage{latexsym}
\newtheorem{propo}{Proposition}[section]
\newtheorem{defi}[propo]{Definition}
\newtheorem{lemma}[propo]{Lemma}
\newtheorem{corol}[propo]{Corollary}
\newtheorem{theor}[propo]{Theorem}
\newtheorem{theo}[propo]{Theorem}

\theoremstyle{definition}
\newtheorem{remar}[propo]{Remark}

\newcommand{\ld}{,\ldots ,}

\newcommand{\ra}{ \rightarrow }

\newcommand{\lan}{ \langle }
\newcommand{\ran}{ \rangle }

\newcommand{\diag}{\mathop{\rm diag}\nolimits}

\newcommand{\Id}{\mathop{\rm Id}\nolimits}

\newcommand{\Z}{\mathbb Z}

\newcommand{\ep}{\varepsilon}

\newcommand{\lam}{\lambda }

\newcommand{\up}{^{-1}}

\newcommand{\el}{\end{lemma}}

\newcommand{\om}{\omega }

\def\d12{{_{12}}}

\def\au{{automorphism }}

\def\ei{{eigenvalue }}
\def\eis{{eigenvalues }}

\def\f{{following }}

\def\ii{{if and only if }}

\def\ir{{irreducible }}

\def\irr{{irreducible representation }}

\def\mult{{multiplicity }}

\def\rep{{representation }}
\def\reps{{representations }}

\def\SL{{\rm SL}}





 \hoffset = -20mm
 \voffset = -10mm

\begin{document}

\title{Subgroups of simple algebraic groups containing regular tori, 
and irreducible representations with \mult 1 non-zero weights}
\author{Donna Testerman and A.E. Zalesski}
\maketitle

\medskip
\begin{abstract}
Our main goal is to determine, under certain restrictions,  the maximal closed connected subgroups of simple algebraic groups
containing a regular torus. We call a torus 
\emph{regular} if its
centralizer is abelian.
 We also obtain some results of independent interest. In particular, we determine 
the \ir \reps of simple algebraic groups whose  non-zero weights 
occur with \mult 1.

\end{abstract}


\section{Introduction}\label{sec:intro}

Let $H$ be a simple algebraic group defined over an algebraically
closed field $F$ of characteristic $p\geq0$. 
The subgroups of $H$ containing  a maximal torus   are called subgroups of maximal rank. They play 
a substantial role in the structure theory of algebraic groups. 
A  well-known and important classical result (going back to Dynkin and Borel-De Siebenthal) states that
 every subgroup of maximal rank is either 
contained in a parabolic subgroup of $H$, or lies in the normalizer of a subsystem subgroups (See \cite[\S 13]{MTbook}.)
A subsystem subgroup $G$ is a semisimple subgroup of $H$, normalized by a maximal torus of $H$;
consequently the root system of $G$ is, in a natural way, a subset of the root system of $H$. Note that Seitz \cite{GS2}
obtained some analog of the Dynkin-Borel-De Siebenthal classification for finite Chevalley groups.  In addition,
there are many results in the literature concerning classifying subgroups of Chevalley groups 
over infinite fields which contain a maximal split torus; for a discussion and bibliography see \cite{Vav1,Vav2}.

Our aim here is to generalize the Dynkin-Borel-De Siebenthal result by replacing a maximal torus by a 
{\it regular torus}, that is, 
a torus whose centralizer in $H$ 
is a maximal torus of $H$. If $G$ is a closed subgroup of $H$ which contains a regular
 torus of $H$, then the maximal tori of $G$ 
are regular in $H$.
Hence, we would like to determine 
the closed connected subgroups $G$ of $H$ whose maximal tori are regular in $H$. 
In its most general form, the question as stated is intractable; one has simply to think about 
indecomposable representations of 
semisimple groups to see that even in the case where $H$ is of type $A_m$, a complete classification is not
 realizable at this time,
at least when ${\rm char}(F)>0$.
A tractable version of the above question, and the one which we consider here is the following:\medbreak

\noindent\emph{Problem} 1. Determine (up to conjugacy) the maximal closed connected subgroups $G$ of a simple algebraic 
group 
$H$ containing a  regular torus of $H$.
\medbreak

A reader familiar with the strategy for the study of subgroups of simple  algebraic groups may raise the natural question:
 to what extent does
the classification of the maximal closed connected subgroups of simple algebraic groups help in solving Problem 1. 
For example, can one simply ``inspect'' a list of maximal closed connected subgroups and arrive at the answer to 
the problem? We claim that this is not possible. Indeed,
consider for example the case where $H$ is a classical group with natural module $V$. Here the classification of 
maximal closed connected subgroups
 reduces to a question about the maximality
of simple subgroups $G\subset H$ acting irreducibly and tensor-indecomposably on $V$. Since the overwhelming majority
of irreducible tensor-indecomposable subgroups of $H$ are maximal among closed connected subgroups of $H$, the 
classification 
consists of 
listing explicitly the non-maximal such subgroups. (See \cite[Thm. 3]{Seitzclass}.) Therefore, it is impossible to decide 
by simple examination of maximal subgroups 
$G$ of $H$ which of them contains a regular torus of $H$, at least for classical groups $H$. The situation for the exceptional
algebraic groups $H$ is somewhat different and will be discussed later. 

The solution to Problem 1 must include of course all subgroups of $H$ of maximal rank.
As mentioned above, such subgroups are either non reductive and therefore equal to a maximal parabolic subgroup (by \cite{BT}), or are 
described by
the Dynkin-Borel-De Siebenthal classification. Therefore, we will
henceforth consider reductive subgroups $G$ with ${\rm rank}(G) < {\rm rank}(H)$.

We use different approaches to deal with Problem 1 depending on whether $H$ is of classical or of exceptional type. 
In the former case
our strategy is to reduce Problem 1 to 
the recognition of linear representations of simple algebraic groups $G$ whose weights satisfy certain specified 
properties.  
Denote by $V$ the natural module for $H$. Suppose first that $H$ is of type $A_m$.  
Then the embedding $G\ra H$ can be viewed as a representation, and the condition for a
torus $T$ of $G$ to be a regular torus in $H$ 
can be expressed in geometric terms. Specifically, using the language of representation theory, 
$T$ is a regular torus in $H$ \ii all $T$-weight spaces of $V$
are one-dimensional. Therefore, one needs to determine the \reps of simple algebraic groups 
all of whose weight spaces are one-dimensional.
In this form, the problem we are discussing
was considered in a fundamental manuscript of Seitz \cite{Seitzclass}, which was the first major step toward extending Dynkin's
 classification of the maximal closed connected subgroups of the simple algebraic groups defined over ${\mathbb C}$ to the 
groups defined over fields of positive characteristic.  For his purposes, Seitz only needed infinitesimally irreducible
representations satisfying the condition on weight spaces; his result was later extended to general irreducible representations
in  \cite{SZ1}. (We quote the result in Proposition \ref{sz7} below.)

Now suppose that $H$ is a classical group not of type $A_m$; again we view the embedding $G\to H$ as a representation of $G$.
As $G$ is assumed to be maximal among closed connected subgroups, and not containing a maximal torus of $H$, 
a direct application of \cite[Thm. 3]{Seitzclass} shows that either $G$ acts irreducibly on $V$ or
the pair $(G,H)$ is $(B_{m-k}B_k,D_{m+1})$, for some $0\leq k\leq m$. In the latter case, it is
 straightforward to show that $G$ contains a regular torus of $H$. See Section~\ref{sec:red} for details.

Turning now to the \ir subgroups $G\subset H$ containing a regular torus of $H$, we see that it is easier 
to determine such subgroups when $H$ is of type 
$B_m$ or $C_m$ due to the fact that 
a regular torus of $H$ is regular in ${\rm SL}(V)$. Hence, we may refer to the previously mentioned
classification of representations having $1$-dimensional weight spaces.
If $H=D_m$, we need the following more precise result, which we prove in Section~\ref{sec:sscl}. Recall that the \emph{multiplicity} of a weight in a representation is the
dimension of the corresponding weight space.

\begin{propo}\label{55yy} Let $H$ be a classical type simple algebraic group over $F$. 
Let
$V$ be the natural $FH$-module and
let $T$  be a (not necessarily maximal) torus in $H$. 
Then the \f statements are equivalent:\begin{enumerate}[]

\item{\rm (1)} $T$ is a regular torus in $H$;
\item{\rm (2)} Either all $T$-weights  on V have multiplicity $1$,  
or $H$  is of type $D_m$,
and exactly one $T$-weight has \mult $2$ and all other $T$-weights have multiplicity $1$.
\end{enumerate}
\end{propo}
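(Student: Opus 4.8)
The plan is to reduce the statement to a criterion for $C_H(T)$ to be abelian, and then to extract that criterion from the $T$-weight space decomposition of $V$ via the standard description of centralisers of tori in classical groups. Since $H$ has a natural module, $H=\SL(V)$, $\Sp(V)$ or $\SO(V)$ according to its type. Write $V=\bigoplus_\lambda V_\lambda$ for the $T$-weight decomposition and fix a maximal torus $S$ of $H$ with $T\se S$. The first point is that $T$ is regular if and only if $C_H(T)$ is abelian: since $S\se C_H(T)$, if $C_H(T)$ is abelian then $C_H(T)\se C_H(S)=S$, so $C_H(T)=S$ is a maximal torus, while the converse is immediate. (This also shows that the two formulations of \emph{regular} used in the paper --- ``centraliser abelian'' and ``centraliser a maximal torus'' --- agree.)

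Next I would write $C_H(T)$ out explicitly. For $H=\SL(V)$ one has $C_{\GL(V)}(T)=\prod_\lambda\GL(V_\lambda)$, so $C_H(T)=\bigl(\prod_\lambda\GL(V_\lambda)\bigr)\cap\SL(V)$. For $H=\Sp(V)$ or $\SO(V)$, the defining form pairs $V_\lambda$ nondegenerately with $V_{-\lambda}$ when $\lambda\neq0$ and restricts to a nondegenerate form on $V_0$; hence an element of $H$ centralising $T$ is an arbitrary element of $\GL(V_\lambda)$ on each pair $V_\lambda\oplus V_{-\lambda}$ (acting by the contragredient on $V_{-\lambda}$) together with an isometry of $V_0$, so that $C_H(T)=\prod_\lambda\GL(V_\lambda)\times\Cl(V_0)$, the product over one $\lambda$ from each pair $\{\lambda,-\lambda\}$ with $\lambda\neq0$ and $\Cl(V_0)=\Sp(V_0)$ or $\SO(V_0)$ respectively (centralisers of tori in connected groups being connected, this is all of $C_H(T)$). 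Thus $C_H(T)$ is abelian if and only if every factor is. A factor $\GL(V_\lambda)$ is abelian precisely when $\dim V_\lambda\le1$, since a weight space of dimension $d\ge2$ produces a copy of $\SL_d$ in $C_H(T)$; so for $H=\SL(V)$, $C_H(T)$ is abelian exactly when $\dim V_\lambda\le1$ for all $\lambda$. For $H=\Sp(V)$ or $\SO(V)$ one needs in addition that $\Cl(V_0)$ be abelian, which happens exactly when $\dim V_0=0$ in the symplectic case (as $\Sp(V_0)$ is non-abelian once $\dim V_0\ge2$, and $\dim V_0$ is even), when $\dim V_0=1$ in type $B$ (as $\SO(V_0)$ is non-abelian once $\dim V_0\ge3$, and $\dim V_0$ is odd), and when $\dim V_0\in\{0,2\}$ in type $D$, because $\SO_2$ is a one-dimensional torus whereas $\SO_4$ is non-abelian.

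It remains to translate these conditions into multiplicities. In types $A$, $B$ and $C$ the condition amounts to ``every weight space of $V$ is one-dimensional'' --- in type $C$ a one-dimensional $V_0$ cannot occur, while in type $B$ the surviving case is $\dim V_0=1$ --- which is exactly ``all $T$-weights on $V$ have multiplicity $1$''. In type $D$ the condition reads ``every nonzero weight space is one-dimensional and $\dim V_0\in\{0,2\}$'', i.e.\ either all $T$-weights have multiplicity $1$ (case $\dim V_0=0$) or the weight $0$ has multiplicity $2$ and all others multiplicity $1$ (case $\dim V_0=2$); moreover a nonzero weight of multiplicity $\ge2$ would already force $C_H(T)$ non-abelian, so the unique multiplicity-$2$ weight in the exceptional case must be the zero weight. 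This is precisely the asserted equivalence. The argument is short; the only delicate point is the type $D$ alternative, where one must notice that a $2$-dimensional $V_0$ (contributing the torus $\SO_2$) does no harm while a $4$-dimensional $V_0$, or a $2$-dimensional nonzero weight space, does. As always, orthogonal groups in characteristic $2$ require minor additional care, but the shape of $C_H(T)$ --- and hence the argument --- is unchanged.
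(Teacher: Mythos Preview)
Your argument is correct, and it is cleaner than the route the paper takes. The paper splits the proof into two pieces: for the implication $(1)\Rightarrow(2)$ it chooses a regular element $t\in T$ and analyses which root values $t_it_j^{\pm1}$, $t_i^{\pm1}$, $t_i^2$ can equal~$1$ (its Lemmas~\ref{2h2} and~\ref{33}), deducing that the eigenvalue pattern on $V$ is as in~(2); for $(2)\Rightarrow(1)$ in the $D_m$ exceptional case it argues geometrically about the $2$-dimensional weight space $M$, eliminating by contradiction the possibilities that $M$ is totally singular or degenerate but nonsingular, and then observing that $\SO_2$ contains no unipotent elements (its Proposition~\ref{55y}). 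Your approach bypasses both the root/eigenvalue analysis and the case-by-case geometry by writing down $C_H(T)$ in one stroke as $\prod_\lambda\GL(V_\lambda)\times\Cl(V_0)$ and reading off when each factor is abelian. This is genuinely more direct, and it makes transparent why $D_m$ is the only exceptional type: it is the unique case where the isometry group of a nontrivial $V_0$ can be a torus. The paper's approach has the minor advantage of pinning down, in the course of the argument, exactly which single weight can have multiplicity~$2$ in terms of the eigenvalue $1$ (its Lemma~\ref{33}) and of making explicit the form-theoretic obstruction in each degenerate subcase; but for the purpose of proving Proposition~\ref{55yy} itself your structural computation is both shorter and more illuminating.
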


The  above proposition motivates our consideration of \ir \reps of
simple algebraic groups $G$ having at most one $T$-weight space of dimension greater than $1$ 
(where $T$ is now taken to be a maximal torus of $G$). This is another main theme of this paper which contributes to the 
study of the pattern of weight multiplicities of irreducible representations of simple algebraic groups. 
This topic has been studied from various points of view, see for example \cite{Seitzclass, BOS1, SZ, Premet88}. 
In \cite[Propositions 3.4 and 3.6]{BenkartLemire}
 Benkart, Britten and Lemire determined infinite-dimensional modules for the complex simple Lie algebras
having a weight space decomposition with all weight spaces of dimension 1. Their  results hint at some possible 
connection between 
weight multiplicities in irreducible representations of 
 semisimple groups defined over a field of positive characteristic and weight multiplicities in 
appropriate infinite-dimensional modules for semisimple complex Lie algebras.

Before stating our main result in this direction, we introduce the following notation. For  a  dominant weight  
$\lambda\in X(T)$, the group of rational characters of $T$, we denote by $L_G(\lam)$ the irreducible
$FG$-module of highest weight $\lambda$. 

\begin{theor}\label{mt11} Let $G$ be a simple algebraic group over $F$ and let $T\subset G$ be a maximal torus.
\begin{enumerate}[]
\item{\rm (1)} Let $\lam\in X(T)$ be a non-zero dominant weight. If ${\rm char}(F)=p>0$, assume in addition that 
$\lambda$ is $p$-restricted.
All  non-zero weights
of $L_G(\lam)$ are of \mult $1$ if and only if $\lam$ is as in Table $1$ or Table $2$.
\item{\rm (2)} Let $\mu\in X(T)$ be a non-zero dominant weight such that all non-zero 
weights of $L_G(\mu)$ are of \mult $1$. Then either all weights of $L_G(\lambda)$ have multiplicity $1$, or
$\mu=p^k\lam$ for some integer $k\geq 0$ and $\lam$ as in  Table $1$ or Table $2$, where $k=0$ if 
${\rm char}(F)=0$.
\end{enumerate}
\end{theor}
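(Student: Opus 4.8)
\medskip\noindent\emph{Proof strategy.} The plan is to establish (1) first and deduce (2) from it via Steinberg's tensor product theorem. The ``if'' direction of (1) is a direct verification: each $\lam$ in Tables $1$ and $2$ is the highest weight of a module with a transparent weight structure --- an exterior power or a small symmetric power of the natural module in type $A$, the natural module or a spin module in the remaining classical types, the adjoint module in any type, or one of the $27$-, $56$- and $7$-dimensional modules for $E_6$, $E_7$ and $G_2$ --- whose non-zero weights (the roots, in the adjoint case) are visibly of \mult $1$, the few low-rank and small-characteristic entries being confirmed by explicit computation (for the exceptional groups, from L\"ubeck's tables). So the substance of (1) is the ``only if'' direction. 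I would first reduce to the case $\lam\in\Z\Phi$ (the root lattice): if $\lam\notin\Z\Phi$ then $0$ is not a weight of $L_G(\lam)$, so the hypothesis that all non-zero weights have \mult $1$ is the hypothesis that all weight spaces are $1$-dimensional, and those $\lam$ are listed by the classification quoted in Proposition~\ref{sz7}; one checks that all of them appear in Tables $1$ and $2$.

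For $\lam\in\Z\Phi$ the plan is to bound $\lam$ and then run a finite check. The bound comes from a combinatorial analysis on the Dynkin diagram: using Freudenthal's formula for the multiplicity in the Weyl module $V_G(\lam)$ of a weight one or two levels below $\lam$, one shows that once the coefficients $\langle\lam,\al_i^\vee\rangle$, or the support of $\lam$, exceed explicit thresholds, some \emph{non-zero} dominant weight $\mu\prec\lam$ already has $\dim V_G(\lam)_\mu\geq2$. The delicate point, and the step I expect to be the main obstacle, is transferring this to the \ir quotient $L_G(\lam)$, since in characteristic $p$ the multiplicity of $\mu$ can drop on passing from $V_G(\lam)$ to $L_G(\lam)$; this is controlled using the Jantzen sum formula together with Premet's theorem, which outside small characteristics identifies the weight supports of $V_G(\lam)$ and $L_G(\lam)$, and the residual cases in which a doubly occurring weight actually collapses in $L_G(\lam)$ are precisely what produce the extra, characteristic-specific entries of Table $2$. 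After the bound only finitely many $\lam$ remain for each Dynkin type; for the classical groups they are settled by the explicit module descriptions above, or by restricting to a Levi or subsystem subgroup and inducting on the rank, while for $G_2$, $F_4$, $E_6$, $E_7$, $E_8$, where no such inductive device is available, one invokes L\"ubeck's explicit weight-multiplicity data. What survives is exactly Tables $1$ and $2$.

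For (2), write the $p$-adic expansion $\mu=\sum_{i\geq0}p^i\mu_i$ with each $\mu_i$ $p$-restricted; by Steinberg's theorem $L_G(\mu)\cong\bigotimes_i L_G(\mu_i)^{[i]}$, so the \mult in $L_G(\mu)$ of a weight $\nu$ is $\sum\prod_i\dim L_G(\mu_i)_{\nu_i}$, summed over the decompositions $\nu=\sum_i p^i\nu_i$ with $\nu_i$ a weight of $L_G(\mu_i)$. If exactly one digit $\mu_k$ is non-zero, then $L_G(\mu)$ and $L_G(\mu_k)$ have the same weight multiplicities up to the rescaling $\nu\mapsto p^k\nu$, so $\mu_k$ is $p$-restricted, non-zero, and has all non-zero weights of \mult $1$; by (1) it lies in Table $1$ or Table $2$ and $\mu=p^k\mu_k$ (with $k=0$ when $\ch(F)=0$, as there is then no twist). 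If at least two digits are non-zero I claim every weight of $L_G(\mu)$ has \mult $1$, i.e. we are in the first alternative. First, for every non-zero digit $\mu_i$, $L_G(\mu_i)$ has all weight spaces $1$-dimensional: were some weight of $L_G(\mu_i)$ of \mult $\geq2$, then --- using that weight multiplicities are $W$-invariant, and placing that weight (or, in the non-zero case, a suitable Weyl conjugate of it) in the $i$th tensor factor and the highest weight $\mu_j$ in each other factor --- one obtains a non-zero weight of $L_G(\mu)$ of \mult $\geq2$, contradicting the hypothesis. Hence, by Proposition~\ref{sz7}, each non-zero $\mu_i$ lies in a short explicit list of $p$-restricted weights, and in a suitable integral coordinate system all weights of the associated modules have coordinates of absolute value $\leq p-1$ (the handful of genuinely small-characteristic members being checked by hand). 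A base-$p$ uniqueness argument then shows the only decomposition $0=\sum_i p^i\nu_i$ with $\nu_i$ a weight of $L_G(\mu_i)$ is $\nu_i=0$ for all $i$, so $\dim L_G(\mu)_0=\prod_i\dim L_G(\mu_i)_0\leq1$ and all weights of $L_G(\mu)$ have \mult $1$. (When $G$ is of type $A_1$ every $L_G(\mu)$ has all weights of \mult $1$ in any characteristic, so that case is subsumed.)
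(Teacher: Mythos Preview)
Your approach to Part~(1) differs substantially from the paper's. You propose bounding $\lambda$ via Freudenthal's formula on the Weyl module $V_G(\lambda)$ and then transferring multiplicity information to $L_G(\lambda)$ via the Jantzen sum formula and Premet's theorem. The paper instead works directly in $L_G(\lambda)$ throughout: it proves a key restriction lemma (Lemma~\ref{44}) showing that if $\lambda\in\Omega_2(G)$ then every $FX$-composition factor of $L_G(\lambda)|_X$, for a $T$-normalized subsystem subgroup $X$, has highest weight in $\Omega_2(X)$, and then runs an induction on rank using a battery of specific multiplicity results for $L_G(\lambda)$ already established in the literature (collected as Lemma~\ref{wtlem1}, drawn from \cite{Seitzclass}, \cite{Testexc}, \cite{BGT}). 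This subsystem-restriction device is applied uniformly across classical and exceptional types --- contrary to your remark, the paper handles $F_4$, $E_6$, $E_7$, $E_8$ precisely by restricting to Levi and subsystem subgroups of smaller rank (for instance the $D_5$ and $A_5$ Levis in $E_6$, or the $B_3$, $C_3$ Levis and a $D_4$ subsystem in $F_4$), with L\"ubeck's tables invoked only for residual low-rank verifications. Your Freudenthal--Jantzen route could in principle be made to work, but it is considerably more technical, and you give no indication of how the Jantzen filtration would actually be controlled case by case.

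For Part~(2), your base-$p$ uniqueness argument is both unnecessary and incompletely justified. Once you have established that each nonzero digit $\mu_i$ lies in $\Omega_1(G)$ (your argument for this step is fine and matches the paper's), Proposition~\ref{sz7}(2) already tells you that either all weights of $L_G(\mu)$ have multiplicity~$1$, or one of three explicit exceptional adjacent pairs $(\mu_l,\mu_{l+1})$ occurs (namely $(C_n,p=2)$ or $(G_2,p\in\{2,3\})$). The paper simply checks by hand, in each of those three cases, that the zero weight has multiplicity at most~$1$ in $L_G(\mu_l)\otimes L_G(p\mu_{l+1})$; hence any weight of multiplicity $\geq 2$ must be nonzero, contradicting the hypothesis. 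Your alternative route --- asserting that all weights of $L_G(\mu_i)$ for every $\mu_i\in\Omega_1(G)$ have coordinates of absolute value at most $p-1$ in some integral basis, and deducing uniqueness of the decomposition $0=\sum p^i\nu_i$ --- is not obviously true (consider the $A_n$ weights $c\omega_j+(p-1-c)\omega_{j+1}$, or the $C_n$ weights $\omega_{n-1}+\tfrac{p-3}{2}\omega_n$) and would itself require a case-by-case verification more laborious than the paper's three explicit checks.
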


In order to interpret the above-mentioned tables, we require the following definition.

\begin{defi} Let $G$ be a semisimple algebraic group with maximal torus $T$. 
We denote by
\emph{$\Omega_2(G)$} the set of $p$-restricted weights $\lam\in X(T)$ such that
all non-zero weights of $L_G(\lam)$ have \mult $1$, 
and by
\emph{$\Omega_1(G)$} the set of weights $\lambda\in\Omega_2(G)$ such that
all  weights of $L_G(\lam)$ have \mult $1$.

\end{defi}

It is perhaps worth pointing out that there are no infinite families of examples in Table 2, unlike the situation with
modules all of whose weight multiplicities are 1. Moreover, the set of weights $\Omega_2(G)\setminus\Omega_1(G)$
is independent of the characteristic; hence one has the same list in characteristic 0 and in positive characterisic $p$.
Note that Table 2 also contains the data on the multiplicity of the zero weight in $L_G(\lam)$.
We think that this result may be useful for other applications.

In applying the above result to obtain a solution to Problem 1 for a classical type group $H$,
we observe that certain irreducible representations $\rho_1$, $\rho_2$ of a simple group $G$,
with highest weights $\lambda_1$, respectively $\lambda_2$,
give rise to the same subgroup of $H$; that is, we find that 
$\rho_1(G)=\rho_2(G)$. For example, this happens for $\lambda_2 = p^k\lambda_1$, 
or if $\rho_1$ is obtained from $\rho_2$ by applying a graph
automorphism of $G$. In particular, this occurs for certain pairs $\lambda_1,\lambda_2$ such that 
all non-zero weights of $L_G(\lambda_1)$ and $L_G(\lambda_2)$ occur with multiplicity at most one. 
Such coincidences are clear and so we will not list them explicitly.

We now consider the image of the simple group $G$ under an irreducible representation $\rho$ having highest weight
as in Theorem~\ref{mt11}.
To decide whether $\rho(G)$ contains a regular torus of the corresponding classical group, we must determine
whether $\rho(G)$ stabilizes a non-degenerate quadratic or alternating form on the associated module, and apply
Proposition~\ref{55yy}. This will be carried out in Section~\ref{sec:forms} for tensor-indecomposable representations. 
See Section~\ref{sec:red}
for a discussion of the general case.

Now we turn to the case where  $H$ is of exceptional Lie type. In contrast with the classical group case, 
the classification of maximal positive-dimensional closed subgroups of $H$ is explicit \cite[Table 1]{LS2};
we analyse the  maximal connected subgroups which are not  of maximal rank, 
and decide for which of
 these a maximal torus is regular in  $H$. This is done in Proposition~\ref{dd1}. Here our method uses 
a different aspect of \rep theory than that used in the classical group case.
It is based upon the following fact.

\begin{propo}\label{lie} Let $H$ be a connected algebraic group over $F$, with Lie algebra ${\rm Lie}(H)$. Let $T\subset H$
be a (not necessarily maximal) torus. 
The torus $T$ is regular in $H$ \ii $\dim C_{{\rm Lie}(H)}(T) = {\rm rank}(H)$. 
\end{propo}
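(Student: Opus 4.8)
The strategy is to reduce the equivalence to two standard facts about the centralizer $C := C_H(T)$ --- namely that $\dim C = \dim C_{\mathrm{Lie}(H)}(T)$ and that $\mathrm{rank}(C) = \mathrm{rank}(H)$ --- after which the statement follows from an elementary dimension count. Throughout, write $r = \mathrm{rank}(H)$ for the dimension of a maximal torus of $H$.

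First I would record that $C$ is a closed \emph{connected} subgroup of $H$ (the centralizer of a torus in a connected linear algebraic group is connected) and that $\mathrm{Lie}(C) = C_{\mathrm{Lie}(H)}(T)$. This last identity is the one point where the hypothesis that $T$ is a \emph{torus} is genuinely used: since $T$ is linearly reductive, its adjoint action on $\mathrm{Lie}(H)$ is semisimple, so $\mathrm{Lie}(H)$ splits as the direct sum of its $T$-weight spaces, and the zero weight space $C_{\mathrm{Lie}(H)}(T)$ is precisely $\mathrm{Lie}(C)$ --- equivalently, the fixed-point scheme $H^T = C_H(T)$ is smooth. Combining this with the smoothness of $C$ gives $\dim C = \dim\mathrm{Lie}(C) = \dim C_{\mathrm{Lie}(H)}(T)$. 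Next I would check $\mathrm{rank}(C) = r$: the torus $T$ lies in some maximal torus $S$ of $H$, and since $S$ is abelian and contains $T$ we have $S \subseteq C_H(T) = C$, so $C$ contains an $r$-dimensional torus; as $\mathrm{rank}(C) \le \mathrm{rank}(H) = r$ is automatic, equality holds.

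Finally I would assemble the two directions. If $T$ is regular, then $C$ is a maximal torus of $H$, whence $\dim C_{\mathrm{Lie}(H)}(T) = \dim C = r$. Conversely, if $\dim C_{\mathrm{Lie}(H)}(T) = r$, then $\dim C = r$ by the above, and for a maximal torus $S'$ of $C$ we obtain $\dim S' = \mathrm{rank}(C) = r = \dim C$; since $S'$ is a closed connected (hence irreducible) subvariety of the irreducible variety $C$ of full dimension, $S' = C$. Thus $C$ is a torus of dimension $r$, i.e. a maximal torus of $H$, and $T$ is regular.

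The only non-formal ingredient, and so the main obstacle, is the equality $\mathrm{Lie}(C_H(T)) = C_{\mathrm{Lie}(H)}(T)$ --- equivalently, the smoothness of the centralizer of the torus $T$. This is classical but relies essentially on the linear reductivity of $T$, and would fail for a general diagonalizable or unipotent subgroup in place of $T$. Once it is granted, the remainder of the argument is routine structure theory of connected algebraic groups together with the observation that a connected algebraic group whose dimension equals its rank is a torus.
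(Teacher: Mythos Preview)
Your proof is correct and follows essentially the same approach as the paper: both arguments hinge on the identity $\mathrm{Lie}(C_H(T)) = C_{\mathrm{Lie}(H)}(T)$ (which the paper cites from Humphreys), together with the observation that $T$ lies in a maximal torus $T_H$ of $H$ so that $\mathrm{Lie}(T_H)\subseteq C_{\mathrm{Lie}(H)}(T)$. Your write-up is somewhat more explicit than the paper's in spelling out the converse direction (invoking connectedness of $C_H(T)$ and the equality $\mathrm{rank}(C)=\mathrm{rank}(H)$ to conclude $C$ is a torus), but the substance is the same.
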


This follows from the fact that ${\rm Lie}(C_H(S)) = C_{{\rm Lie}(H)}(S)$, for any torus $S\subset H$.
 (See \cite[Prop. A. 18.4]{Hu}). Now 
$T$ lies in a maximal torus $T_H$ of $H$ 
and so ${\rm Lie}(T_H)=C_{{\rm Lie}(H)}(T_H)\subset C_{{\rm Lie}(H)}(T)$.  
Hence, $T$ is regular in $H$ if and only if $C_{{\rm Lie}(H)}(T) = {\rm Lie}(T_H)$. If $G$ is
 explicitly given as a subgroup of $H$, one 
can determine 
the composition factors of the restriction of ${\rm Lie}(H)$ as $FG$-module; indeed this information is available in \cite{LS2}.
 Next, for every composition factor, we
determine the \mult of the zero weight.
 Then $C_{{\rm Lie}(H)}(T)={\rm Lie}(T_H)$ \ii the sum 
of these multiplicities equals the rank of $H$. Our solution to Problem 1 is explicit and the result is given in Table~\ref{tab:exc}. 

In contrast, for classical groups the main ingredient is the \f statement:

\begin{theo}\label{mt22} Let $G$ be a simple algebraic group over $F$
and $\rho:G\to {\rm GL}(V)$ an irreducible rational representation with $p$-restricted highest weight $\lambda$.
Let $H\subset {\rm GL}(V)$ be the smallest simple classical group containing $\rho(G)$ and assume 
$\rho(G)$ contains a regular torus of
$H$. Then the pairs $(G,\lambda)$ are those appearing in Tables{\rm ~\ref{tab:Omega1-symp},~\ref{tab:omega2-even-orth},~\ref{tab:Omega1-odd-orth},~\ref{tab:orthp=2},~\ref{tab:nonselfdual},~\ref{tab:p=2symp}}.
\end{theo}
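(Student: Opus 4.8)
The plan is to deduce the theorem by combining Proposition~\ref{55yy}, Theorem~\ref{mt11} and the analysis of invariant forms carried out in Section~\ref{sec:forms}. First I would reduce the regular-torus condition to a condition on weight multiplicities. As observed in the introduction, $\rho(G)$ contains a regular torus of $H$ if and only if a maximal torus $T$ of $\rho(G)$ is regular in $H$; and since $\rho$ is a non-trivial representation of a simple algebraic group, its kernel is finite and central, so $T=\rho(T_G)$ for a maximal torus $T_G$ of $G$. Every weight of $L_G(\lam)$ is trivial on the kernel of the restriction of $\rho$ to $T_G$, hence is a rational character of $T$, and distinct weights of $T_G$ remain distinct on $T$; so the $T$-weights on the natural module $V$ of $H$, with multiplicities, coincide with the $T_G$-weights on $L_G(\lam)$, and in particular the multiplicity of the zero weight on $V$ equals the multiplicity of the zero weight in $L_G(\lam)$. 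Applying Proposition~\ref{55yy} to $(H,T)$ now shows that in every admissible case the regularity of $T$ forces all non-zero weights of $L_G(\lam)$ to have multiplicity $1$: this is immediate when $H$ is of type $A$, $B$ or $C$, and for $H$ of type $D_m$ one uses that $V$ is then a self-dual $\rho(G)$-module, so its weight multiplicities are symmetric under $\chi\mapsto -\chi$, whence the unique weight of multiplicity $2$ permitted by Proposition~\ref{55yy} must be the zero weight. By Theorem~\ref{mt11}(1), whose $p$-restrictedness hypothesis is our standing assumption, it follows that $\lam$ occurs in Table~$1$ or Table~$2$; thus the set of candidate pairs $(G,\lam)$ is finite and explicit.

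It then remains to run through Tables~$1$ and~$2$, identifying for each pair $(G,\lam)$ the smallest classical group $H$ containing $\rho(G)$ and testing the criterion of Proposition~\ref{55yy} for that $H$. Because $\lam$ is $p$-restricted, $L_G(\lam)$ is tensor-indecomposable as a $G$-module, so $H=\SL(V)$ when $L_G(\lam)$ is not self-dual, and $H$ is $\Sp(V)$ or $\SO(V)$ otherwise, according to the type of invariant form carried by $V$. Concretely: if $-w_0\lam\neq\lam$ then $H=\SL(V)$, and Proposition~\ref{55yy} requires all weights of $L_G(\lam)$ to have multiplicity $1$, i.e. $\lam\in\Omega_1(G)$; these pairs fill Table~\ref{tab:nonselfdual}. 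If $L_G(\lam)$ is self-dual and $V$ carries an invariant alternating form (in particular, in characteristic $2$, when $V$ carries no invariant quadratic form), then $H=\Sp(V)$, again with all weights of multiplicity $1$; these pairs fill Tables~\ref{tab:Omega1-symp} and~\ref{tab:p=2symp}. If instead $V$ carries an invariant symmetric bilinear (or, in characteristic $2$, quadratic) form, then $H=\SO(V)$, of type $B_m$ or $D_m$ according to the parity of $\dim L_G(\lam)$: in the odd-dimensional case, with $p\neq 2$, all weights must have multiplicity $1$, giving Table~\ref{tab:Omega1-odd-orth}, while in the even-dimensional case Proposition~\ref{55yy} additionally allows exactly one weight of multiplicity $2$, which — the non-zero weights of every $\lam$ in Table~$2$ already having multiplicity $1$ — occurs precisely when the zero weight of $L_G(\lam)$ has multiplicity exactly $2$, a quantity recorded in Table~$2$; this, together with the remaining characteristic $2$ orthogonal cases, yields Tables~\ref{tab:omega2-even-orth} and~\ref{tab:orthp=2}. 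The converse inclusion, that each pair appearing in the six tables does give $\rho(G)$ containing a regular torus of its minimal classical overgroup, is verified by the same criterion.

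The substance of the argument lies in this last step. For each of the finitely many entries of Tables~$1$ and~$2$ one must pin down the minimal classical overgroup, which amounts to deciding self-duality, the precise type of the invariant form — here the familiar characteristic $2$ subtlety intervenes, that a symmetric bilinear form may be alternating, so that one must pass to invariant quadratic forms — the parity of $\dim L_G(\lam)$, and, for the Table~$2$ entries feeding the type $D_m$ case, the exact multiplicity of the zero weight. These are routine verifications resting on standard highest-weight and dimension data and on the form computations of Section~\ref{sec:forms}; the difficulty is not conceptual but one of organization, there being many cases to treat (and the coincidences among representations noted in the introduction to be taken into account so that each subgroup is listed only once).
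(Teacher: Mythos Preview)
Your proposal is correct and follows essentially the same route as the paper: reduce the regular-torus condition to a weight-multiplicity condition via Proposition~\ref{55yy}, invoke Theorem~\ref{mt11} to pin $\lam$ down to Tables~\ref{tab:omega1} and~\ref{tab:omega2}, and then run the case analysis of Section~\ref{sec:forms} (self-duality, form type, dimension parity, zero-weight multiplicity) to populate the six tables. Your self-duality argument showing that the multiplicity-$2$ weight in the $D_m$ case must be the zero weight is a clean way to extract this fact, which in the paper is implicit in Lemma~\ref{33}; note also that the remark on tensor-indecomposability is not actually needed to identify $H$, since $H$ is determined purely by the invariant form on $V$.
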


Observe that Theorem~\ref{mt22} includes the case of ${\rm char}(F)=0$. Moreover, we do not 
require $\rho(G)$ to be maximal in $H$. The specific case of maximal subgroups of classical
type groups is treated in Theorems~\ref{ma1} and \ref{ma2}.  

The main result for subgroups of exceptional type 
groups $H$ does, however, require
the hypothesis of maximality.

\begin{theo} Let $H$ be an exceptional simple algebraic group over $F$ and $M\subset H$ a maximal closed
connected subgroup containing a regular torus of  $H$.
Then either $M$ contains a maximal torus of $H$ or the pair 
$(M,H)$ is as in in Table~$\ref{tab:exc}$.  \end{theo}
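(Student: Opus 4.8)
The plan is to reduce the statement to the Liebeck--Seitz classification of maximal positive-dimensional closed subgroups of exceptional groups, and then to test each non-maximal-rank candidate against the Lie-algebra criterion of Proposition~\ref{lie}. First I would dispose of the trivial alternatives. If $M$ is not reductive, then by \cite{BT} $M$ is a maximal parabolic subgroup of $H$, hence contains a Borel subgroup, and in particular a maximal torus of $H$; likewise, if $M$ is reductive of maximal rank it contains a maximal torus of $H$. In both cases we land in the first conclusion of the theorem. So we may assume $M$ is reductive with ${\rm rank}(M)<{\rm rank}(H)$, and then \cite[Table 1]{LS2} tells us that $M$ lies in an explicit finite list of semisimple subgroups of small rank (usually simple, occasionally a product of two or three simple factors).

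Next, the criterion. Fix a maximal torus $T$ of $M$. Since $M$ contains a regular torus of $H$, the torus $T$ is itself regular in $H$ (any regular torus of $H$ lying in $M$ is contained in some maximal torus of $M$, and the centralizer in $H$ of that maximal torus is then a maximal torus of $H$, as noted in the introduction). By Proposition~\ref{lie} and the discussion following it, $T$ is regular in $H$ if and only if $C_{{\rm Lie}(H)}(T)={\rm Lie}(T_H)$ for a maximal torus $T_H$ of $H$ containing $T$; equivalently, the multiplicity of the zero weight of the $T$-module ${\rm Lie}(H)$ equals ${\rm rank}(H)$. Because the character of a module is the sum of the characters of its composition factors, this zero-weight multiplicity equals $\sum_i m_0(W_i)$, where $W_1,W_2,\dots$ are the composition factors (counted with multiplicity) of ${\rm Lie}(H){\downarrow}M$ as an $FM$-module and $m_0(W_i)$ is the dimension of the zero weight space of $W_i$.

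Then I would carry out the computation. For each of the finitely many candidates $M$ from \cite[Table 1]{LS2}, the composition factors of ${\rm Lie}(H){\downarrow}M$ are recorded in \cite{LS2} (they are needed there to prove maximality), in each relevant characteristic. For every composition factor $W_i$ one reads off $m_0(W_i)$: when the highest weight of $W_i$ falls under Theorem~\ref{mt11}, the value is provided by Table~$2$, which records the multiplicity of the zero weight; otherwise the modules that occur are among a short explicit list of representations of small-rank groups whose entire weight-multiplicity pattern is classically known. Summing the $m_0(W_i)$ and comparing with ${\rm rank}(H)$ decides, for each $M$, whether a maximal torus of $M$ is regular in $H$; the pairs $(M,H)$ for which the sum equals ${\rm rank}(H)$ are exactly those listed in Table~\ref{tab:exc}, and for those one also exhibits the regular torus explicitly. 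This non-maximal-rank analysis is precisely the content of Proposition~\ref{dd1}, from which the theorem follows by combining it with the two trivial cases above.

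The main obstacle is the bulk and delicacy of the last step: one must run through every maximal non-maximal-rank subgroup of every exceptional $H$ --- for $H=E_8$ this already includes several classes of $A_1$, together with subgroups such as $A_1A_2$, $B_2$, $G_2F_4$ and $A_1G_2G_2$ --- and repeat the bookkeeping in each ``bad'' characteristic, where both the $M$-module structure of ${\rm Lie}(H)$ and the dimensions and weight multiplicities of the composition factors can change. A related subtlety is that in very small characteristic ${\rm Lie}(H)$ need not be semisimple or self-dual even as an $FH$-module (for instance $H=E_7$ with $p=2$); one must therefore be careful to compute the zero-weight multiplicity from the graded character of ${\rm Lie}(H)$ itself rather than from any coarser filtration, and to account for any central or radical contribution to the zero weight space.
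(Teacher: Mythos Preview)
Your proposal is correct and follows essentially the same route as the paper: reduce to reductive $M$ of rank strictly less than ${\rm rank}(H)$, invoke the Liebeck--Seitz classification \cite[Cor.~2, Table~1]{LS2}, and test each candidate via Proposition~\ref{lie} by summing the zero-weight multiplicities of the composition factors of ${\rm Lie}(H)|_M$ recorded in \cite[Table~10.1]{LS2}; this is exactly the content of Proposition~\ref{dd1}. The only minor discrepancy is that the paper draws the zero-weight multiplicities primarily from L\"ubeck's tables \cite{luebeck} rather than from Table~\ref{tab:omega2}, and your list of candidates for $E_8$ is slightly off (there is no maximal $A_1G_2G_2$; the non-simple maximal subgroups of non-maximal rank in $E_8$ are $A_1A_2$ and $G_2F_4$), but neither point affects the method.
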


Next we outline some possible applications of the above results.  

We expect to use our results  
for recognition of 
linear groups, and more generally, for recognition of subgroups of algebraic groups
that contain an element of a specific nature. This is the principal motivation for our 
consideration of Problem 2 below.  A
semisimple element of $H$ is regular if it centralizes no nontrivial unipotent element. (See Definition~\ref{reg}
and the discussion following the definition.)
We now apply the following standard result.

\begin{lemma}\label{ca1} {\rm \cite[Prop. 16.4]{Hu}}
Let $K$ be a connected algebraic group. A torus $S$ of $K$  is
regular \ii $S$ contains a regular element.
\end{lemma}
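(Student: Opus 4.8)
The plan is to derive the lemma from one observation: for any torus $S$ in the connected group $K$ there exists $s\in S$ with $C_{K}(s)^{\circ}=C_{K}(S)^{\circ}$. Recall that a semisimple element is regular precisely when the identity component of its centraliser is a maximal torus (equivalently, when it centralises no nontrivial unipotent element), and recall that the centraliser of a torus in a connected group is connected. Granting the observation, both implications are immediate. If $S$ is regular then $C_{K}(S)$ is a maximal torus of $K$, so for the element $s$ above $C_{K}(s)^{\circ}=C_{K}(S)^{\circ}=C_{K}(S)$ is a maximal torus and $s\in S$ is regular. Conversely, if $S$ contains a regular element $s$, then $C_{K}(s)^{\circ}$ is a maximal torus of $K$, while $C_{K}(S)^{\circ}$ is a connected subgroup of $C_{K}(s)^{\circ}$ (as $s\in S$) containing a maximal torus of $K$, namely any maximal torus of $K$ through $S$; two nested maximal tori being equal, $C_{K}(S)=C_{K}(S)^{\circ}$ is a maximal torus, so $S$ is regular.

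To produce $s$, I would use the weight decomposition of ${\rm Lie}(K)$ under the adjoint action of $S$. Write ${\rm Lie}(K)={\rm Lie}(K)^{S}\oplus\bigoplus_{i=1}^{r}{\rm Lie}(K)_{\alpha_{i}}$, where $\alpha_{1},\dots,\alpha_{r}\in X(S)$ are the distinct nontrivial weights of $S$ on ${\rm Lie}(K)$; by the identity ${\rm Lie}(C_{K}(S))=C_{{\rm Lie}(K)}(S)$ noted in the discussion following Proposition~\ref{lie}, the zero weight space ${\rm Lie}(K)^{S}$ is exactly ${\rm Lie}(C_{K}(S))$. Each $\ker(\alpha_{i}\colon S\to{\mathbb G}_{m})$ is a proper closed subvariety of the irreducible variety $S$, so the complement $U=S\setminus\bigcup_{i}\ker\alpha_{i}$ is nonempty and open (if $S$ is trivial there is nothing to choose and one takes $s=e$). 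For $s\in U$ the operator ${\rm Ad}(s)$ acts on ${\rm Lie}(K)_{\alpha_{i}}$ as the scalar $\alpha_{i}(s)\neq1$, so its fixed subspace on ${\rm Lie}(K)$ is precisely ${\rm Lie}(K)^{S}={\rm Lie}(C_{K}(S))$. Now $C_{K}(S)^{\circ}\subseteq C_{K}(s)^{\circ}$ because $s\in S$; and since $s$ centralises $C_{K}(s)$, conjugation by $s$ is trivial on $C_{K}(s)$, hence ${\rm Ad}(s)$ is trivial on ${\rm Lie}(C_{K}(s))$, giving ${\rm Lie}(C_{K}(s))\subseteq{\rm Lie}(C_{K}(S))$. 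As $C_{K}(S)^{\circ}$ is smooth, this forces $\dim C_{K}(s)^{\circ}\leq\dim{\rm Lie}(C_{K}(s))\leq\dim{\rm Lie}(C_{K}(S))=\dim C_{K}(S)^{\circ}$, and together with the reverse inclusion of connected groups we conclude $C_{K}(s)^{\circ}=C_{K}(S)^{\circ}$, as desired.

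I expect the only genuinely non-formal point to be this last passage, from the ${\rm Ad}(s)$-fixed subspace of ${\rm Lie}(K)$ back to the dimension of the algebraic group $C_{K}(s)$: in positive characteristic it relies on the smoothness of centralisers over the algebraically closed field $F$ together with the identity ${\rm Lie}(C_{K}(S))=C_{{\rm Lie}(K)}(S)$ quoted above. Everything else is bookkeeping with connected components and maximal tori. Alternatively, when $K$ is reductive — the case needed for the applications — one may avoid the Lie algebra altogether: relative to a maximal torus $T_{0}\supseteq S$, the group $C_{K}(S)^{\circ}$ is generated by $T_{0}$ together with the root subgroups $U_{\gamma}$ for which $\gamma|_{S}=0$, and any $s\in S$ with $\gamma(s)\neq1$ for every root $\gamma$ not vanishing on $S$ has $C_{K}(s)^{\circ}$ generated by exactly the same subgroups.
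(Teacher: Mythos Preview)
Your argument is correct. Note, however, that the paper does not supply its own proof of this lemma: it is quoted verbatim as \cite[Prop.~16.4]{Hu} and used as a black box. So there is no in-paper proof to compare against.

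That said, your proof is essentially the standard one (and close to what Humphreys does): pick $s\in S$ generic enough to avoid the kernels of the finitely many nontrivial $S$-weights on ${\rm Lie}(K)$, so that $C_{{\rm Lie}(K)}(s)=C_{{\rm Lie}(K)}(S)$, and deduce $C_K(s)^\circ=C_K(S)^\circ$. Your handling of the positive-characteristic subtlety is fine: you only use the inclusion ${\rm Lie}(C_K(s))\subseteq C_{{\rm Lie}(K)}({\rm Ad}(s))$, which holds unconditionally, together with the identity ${\rm Lie}(C_K(S))=C_{{\rm Lie}(K)}(S)$ for tori, which the paper itself invokes after Proposition~\ref{lie}. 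The alternative root-subgroup argument you sketch for reductive $K$ is also standard and indeed suffices for every application in the paper.

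One small remark: in your first paragraph you write $C_K(S)^\circ=C_K(S)$ on the grounds that centralisers of tori are connected; this is correct (it follows from the fact that such a centraliser equals its identity component, being the set of fixed points of a connected torus acting on a connected group), but it is worth being explicit that this is where connectedness of $K$ is used.
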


As every semisimple element of $G$ belongs to a torus in $G$, 
Problem 1 is therefore equivalent to  Problem 2.
\medskip

\noindent {\emph{Problem} $2$. Let $H$ be a simple algebraic group over $F$. Determine (up to conjugacy) all maximal
closed connected subgroups $G$ of $H$ such that $G$ contains a regular semisimple element of $H$.

\medskip
In order to place problem 2 in a more general context, let us recall 
a few  well-known ``recognition'' results. Historically, Mitchell's classification of finite
\ir  primitive linear groups over the complex numbers generated by reflections (1914) was among the most striking. 
Much more recently, Wagner \cite{Wa1,Wa2-3} and Zalesski\u{i}-Sere\v{z}kin \cite{ZS2} classified 
\ir linear groups 
over finite fields generated by reflections. In \cite{He1}, Hering determined the subgroups of $GL(n,q)$ 
generated by irreducible elements of prime order.  

Another potential application of our result is to the study of 
maximal subgroups of
finite simple groups of Lie type, containing a regular element (of the ambient algebraic group). 
This problem was explicity stated by Walter in \cite{Walter}.
A related problem is studied in \cite{SZ_Comm1998, SZ_Comm2000}.
There the authors analyze the irreducible $p$-modular
representations of finite
Chevalley groups
$\rho:G\to
H=GL(V)$ whose image contains a regular semisimple element of $H$,
in the case where  $p={\rm char}(F)$ is
 the defining characteristic of $G$. 

There is a series of results on recognition of finite linear  groups containing a unipotent element of 
particular shape.  
Pollatsek \cite{Po} in characteristic 2, and Wagner \cite{Wa}
 and Zalesski\u{i}-Sere\v{z}kin \cite{ZS1} for odd characteristic, obtained a classification of \ir linear 
groups over finite fields generated by
transvections. Next Kantor \cite{Ka} 
classified the subgroups  of finite classical groups $H$ generated by 
long root elements of  $H$. In \cite{Co1, Co2}, Cooperstein considered the case of
finite groups of exceptional Lie type.  
In fact, Wagner also treated indecomposable subgroups, whereas 
Kantor and Cooperstein assumed that the subgroup under consideration  has
 no normal unipotent subgroup. 

Thompson \cite{Th} initiated the study of finite irreducible linear groups generated by elements $x$ with $(x-\Id)^2=0$  
(such elements are often called ``quadratic'').
Irreducible representations of simple algebraic groups whose image contains
a  quadratic unipotent element were determined by Premet and Suprunenko
in \cite{PS}. Closed simple subgroups of simple algebraic groups containing long root elements
were identified in Liebeck and Seitz \cite{LS1}, 
where they also consider quasisimple subgroups of the finite groups arising as fixed points of some 
endomorphism of the algebraic group.

A result of a similar nature would be that of classifying linear groups $G$ over a 
finite field generated by 
 regular unipotent elements, in place of  root elements as in the above publications. 
(See Definition~\ref{reg} below.) The case where $G$
 is a connected algebraic group 
 is a generic version of this problem.  
 This  has been considered by  Suprunenko in \cite{Su}. The case where $H$
 is an arbitary simple algebraic group (in place of a special linear group)  
was considered by Saxl and Seitz \cite{SS} and 
the present authors \cite{TeZ}.  
In \cite{Su}, $G$ is assumed to be an irreducible subgroup of the special linear group, 
 in \cite{SS} $G$ is assumed to be maximal among positive-dimensional closed subgroups of 
an arbitrary simple algebraic group $H$,
 whereas in \cite{TeZ}, we assume that
$G$ is connected reductive.
  Note that restrictions of this kind are unavoidable, as otherwise 
an explicit classification is impossible.  The results in \cite{SS} and \cite{TeZ} 
can be viewed as a kind of mirror image of those 
by Kantor and Cooperstein, in \cite{Ka, Co1},

A general principle unifying the above results is to recognize a subgroup of a simple algebraic group 
by a conveniently  verifiable property of a single element of the subgroup. That is, given a single element
$g$ of $H$ described in convenient terms,
 determine the closed subgroups $G$ of $H$ containing $g$.

Following the above logic, we are concerned here with classifying the connected reductive algebraic 
subgroups of $H$ generated by regular elements, with further potential applications to finite algebraic groups. 
 We recall the following

\begin{defi}\label{reg} An element $x$ of a connected reductive algebraic group $H$ is said to be 
\emph{regular} if $\dim C_H(x) = {\rm rank}(H)$.
\end{defi}

Note that for a semisimple element $x\in H$, this is equivalent to saying that 
$C_H(x)^\circ$ is a torus, 
or equivalently  that $C_H(x)$ contains no 
non-identity unipotent element. (See \cite[Cor. 4.4]{SpSt} and  \cite[14.7]{MTbook}.) This is also 
equivalent to saying that  $C_H(x)$ has an abelian normal subgroup of finite index. 

In contrast with regular unipotent elements, the set of regular semisimple elements in $H$
 contains infinitely
many $H$-conjugacy classes. This suggests that the classification problem for subgroups 
containing such elements
lies on a much higher level of difficulty. Nonetheless, our results lead to some progress 
in this direction. 

It is of course natural to consider Problem 2 for arbitrary regular elements (i.e., 
not necessarily semisimple), 
especially
because as mentioned above, the case where $g$ is a regular unipotent element was 
already studied and solved in \cite{SS}, \cite{TeZ}, and \cite{Su}.
The following result is proven in Section~\ref{sec:nonss}.

\begin{theo}\label{mt33}
Let $H$ be a connected reductive group over $F$. 
Let $G\subset H$ be a closed connected reductive subgroup. If $G$ contains a regular element of $H$ 
then $G$ contains a regular semisimple element of $H$.
\end{theo}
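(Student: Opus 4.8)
The plan is to reduce the statement to the case where $G$ is semisimple, and then to exhibit a regular semisimple element of $H$ inside $G$ by a dimension-counting argument applied to a generic element of a maximal torus of $G$. First I would reduce to the semisimple case: since $G$ is connected reductive, $G = Z(G)^\circ \cdot G'$ where $G' = [G,G]$ is semisimple, and $Z(G)^\circ$ is a central torus of $G$. Let $x \in G$ be a regular element of $H$, with Jordan decomposition $x = su$ in $H$ (equivalently in $G$, since $G$ is closed and connected). As $C_H(x) = C_H(s) \cap C_H(u)$, regularity of $x$ forces $\dim C_H(s) = \operatorname{rank}(H)$ already, so the semisimple part $s$ of $x$ is itself a regular semisimple element of $H$ lying in $G$. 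This observation essentially does the work once $x$ is given; but the content of the theorem is that such an $x$ (hence such an $s$) exists at all, i.e. that no regular element of $H$ can hide entirely outside $G$ — wait, $x$ is assumed to lie in $G$. So the Jordan decomposition argument is in fact the whole proof: $s \in G$, $s$ is semisimple, and $\dim C_H(s) \geq \dim C_H(x) = \operatorname{rank}(H) \geq \dim C_H(s)$, the last inequality because $s$ is semisimple (so $C_H(s)^\circ$ contains a maximal torus, whence $\dim C_H(s) \geq \operatorname{rank}(H)$, and the reverse by $C_H(x) \subseteq C_H(s)$). Hence $\dim C_H(s) = \operatorname{rank}(H)$ and $s$ is regular semisimple.

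Let me reconsider whether there is a subtlety I am glossing over. The Jordan decomposition $x = su$ in $H$ has $s, u$ polynomials in $x$, hence $s, u \in \overline{\langle x \rangle} \subseteq G$ since $G$ is closed; so indeed $s \in G$. The inequality $\dim C_H(y) \geq \operatorname{rank}(H)$ for semisimple $y$ holds because $C_H(y)^\circ$ is reductive of the same rank as $H$ (a maximal torus of $H$ containing $y$ centralizes $y$). And $C_H(x) = C_H(s) \cap C_H(u) \subseteq C_H(s)$ gives $\operatorname{rank}(H) = \dim C_H(x) \leq \dim C_H(s)$. Combining, $\dim C_H(s) = \operatorname{rank}(H)$, which is precisely the definition (Definition~\ref{reg}) of $s$ being a regular element of $H$; being semisimple, $s$ is a regular semisimple element of $H$, and $s \in G$.

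The main (and essentially only) obstacle is making sure the Jordan decomposition behaves correctly relative to the subgroup $G$, namely that $s$ and $u$ genuinely lie in $G$ and not merely in $H$; this is handled by the standard fact that the semisimple and unipotent parts of $x$ lie in any closed subgroup containing $x$ (as they lie in the closure of the cyclic group generated by $x$), for which I would cite the relevant statement on Jordan decomposition in \cite{Hu} or \cite{MTbook}. Everything else is the elementary centralizer inequality above; no hypothesis beyond $G$ closed connected reductive and $H$ connected reductive is needed, and in particular the reductivity of $G$ is not even used — only that $G$ is closed — but we keep the hypothesis as stated. One should also remark that this shows Problem 1 and Problem 2 coincide in the stated generality, as already noted in the text before Lemma~\ref{ca1}.
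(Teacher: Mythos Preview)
Your argument contains a genuine gap: the inequality chain $\dim C_H(s) \geq \dim C_H(x) = \operatorname{rank}(H) \geq \dim C_H(s)$ is not valid. From $C_H(x) \subseteq C_H(s)$ you correctly deduce $\dim C_H(s) \geq \dim C_H(x) = \operatorname{rank}(H)$, and from $s$ semisimple you correctly deduce $\dim C_H(s) \geq \operatorname{rank}(H)$. But both inequalities point the same way; nothing you have written forces $\dim C_H(s) \leq \operatorname{rank}(H)$. In fact the semisimple part of a regular element need \emph{not} be regular: if $x$ is a regular \emph{unipotent} element of $H$ (which is precisely the case that motivates the theorem), then $s=1$ and $C_H(s)=H$. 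More generally, for $H=\GL_3$ and $x=\operatorname{diag}(2,1,1)\cdot u$ with $u$ a single Jordan block on the $1$-eigenspace, $x$ is regular but $s=\operatorname{diag}(2,1,1)$ has $\dim C_H(s)=5>3$.

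The paper's proof genuinely needs more than a centralizer count. It passes to $Y=C_H(s)^\circ$ and $X=C_G(s)^\circ$, observes that $u$ is regular unipotent in $Y$, and then invokes Proposition~\ref{cc1} (whose proof is nontrivial and uses a density/closure argument with the Borel subgroup) to conclude that a maximal torus $T_X$ of $X$ is regular in $Y$. Choosing $T_X\ni s$ then gives $C_H(T_X)\subseteq C_H(s)^\circ=Y$, so $C_H(T_X)\subseteq C_Y(T_X)$ is a torus, and Lemma~\ref{ca1} finishes. The essential content---handling the regular unipotent case---is exactly what your Jordan-part shortcut bypasses, and that case cannot be avoided.
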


The resolution of Problem 2 for arbitrary regular elements is reduced, by Theorem~\ref{mt33}, to inspecting the subgroups 
$G$ of $H$ that contain a  
regular semisimple element of $H$.  As mentioned above, the case of
regular unipotent elements has been studied and resolved in \cite{SS,TeZ}, while in the current manuscript,
we treat the case of regular semisimple elements. 
To single out these cases is rather natural, but we do not think it 
particularly informative
to analyse in detail other cases where $g$ is regular but neither semisimple nor unipotent. 
We indicate after the proof of   
Theorem~\ref{mt33}  how one can handle the general case in terms of knowledge of
the cases where $g$ is either semisimple or unipotent. 

\medskip

\noindent {\bf{Notation.}} We fix here the notation and terminology to be used throughout
 the paper. We write ${\mathbb N}$ for the set of non-negative integers.
Let $F$ be an algebraically closed field, of characteristic $0$ or of
prime characteristic ${\rm char}(F)=p>0$.
For a natural number $a\geq 1$, we write $p\geq a$ (respectively $p>a$) to mean that either ${\rm char}(F) = 0$ or 
${\rm char}(F) = p\geq a$ (resp.  ${\rm char}(F) = p> a$). For a linear algebraic group $X$ defined over $F$, we write $X^\circ$ for the connected
component of the identity. All groups considered will be linear algebraic groups over $F$, and we will not repeat
this each time.

Let $G$ be a reductive algebraic group over $F$. All $FG$-modules are assumed to be rational, and we will not make 
further reference to this fact.
We fix a maximal torus and Borel subgroup $T\subset B$ of $G$, the root system $\Phi(G)$ with respect to $T$,   
a set of simple roots $\{\alpha_1,\dots,\alpha_n\}$ corresponding to $B$,  and the corresponding
fundamental dominant weights $\{\omega_1,\dots,\omega_n\}$. Write $X(T)$ for the group
of rational characters on $T$. Given a dominant weight $\lam\in X(T)$, we
 write $L_G(\lam)$ for the irreducible $FG$-module with highest weight $\lam$.  A dominant weight $\lambda\in X(T)$ is said to be \emph{$p$-restricted} if either 
${\rm char}(F)=0$ or ${\rm char}(F)=p$ and $\lambda=\sum a_i\omega_i$ with $a_i<p$ for all $i$.
Recall that a weight $\mu\in X(T)$ is said to be \emph{subdominant} 
to $\lambda$ if $\lambda$ and $\mu$ are dominant weights and $\mu =\lambda-\sum a_i\alpha_i$ for some $a_i\in{\mathbb N}$.
For an $FG$-module $M$ and a weight $\mu\in X(T)$, we let $M_\mu$ denote the $T$-weight space 
corresponding to the weight $\mu$.
Set $W_G: = N_G(T)/T$, the Weyl group of $G$ and write $s_i$ for the reflection in $W_G$
corresponding to the simple root $\alpha_i$. We label Dynkin diagrams as in Bourbaki \cite{Bourb4-6}. 

We will assume that $p>2$ when $G$ is simple of type $B_n$, since for our purposes,
 when $p=2$ the group $B_n$ can be treated as a group of type $C_n$. 
When $G$ is a classical type simple algebraic group, by the ``natural''
module for $G$ we mean $L_G(\omega_1)$. If $G$ is simple of type $B_n$, $C_n$, or $D_n$,
it is well-known that $G$ preserves a non-degenerate bilinear or quadratic form on the natural module.


\begin{table}
$$\begin{array}{|l|c|}\hline
~~~~~~~~~~G& \Omega_1(G)\setminus \{0\} \\
\hline
\hline
A_1&a\om_1, 1\leq a<p\\
\hline
A_n, n>1&a\om_1, b\om_n, \ 1\leq a,b<p\\
& \om_i,\  1<i< n\\
&c\om_i+(p-1-c)\om_{i+1}, \  1\leq i<n,\  0\leq c<p \\
\hline
B_n, n>2, p>2 &\om_1,\ \om_n \\
\hline
C_n, n>1, p=2&\om_1,\ \om_n\\
\hline 
C_2,p>2&\om_1,\ \om_2,\ 
\om_1+\frac{p-3}{2}\om_2,\ \frac{p-1}{2}\om_2\\
 \hline
C_3&\omega_3\\
\hline
C_n,n>2,p>2&\om_1,\ \om_{n-1}+\frac{p-3}{2}\om_n,\ \frac{p-1}{2}\om_n\\
 \hline
D_n, n>3&\om_1,\ \om_{n-1},\  \om_n\\
 \hline
E_6&\om_1, \ \om_6\\
\hline
E_7&\om_7  \\
\hline
F_4, p=3&\om_4 \\
\hline
G_2, p\neq 3&\om_1\\
\hline 
G_2, p= 3&\om_1,\ \om_2\\
\hline
\end{array}$$
\caption{Irreducible $p$-restricted $G$-modules with all weights of
multiplicity 1}\label{tab:omega1}
\end{table}

\begin{table}
$$\begin{array}{|l|l|c|c|}
\hline
G& \mbox {conditions}&\Omega_2(G)\setminus \Omega_1(G) &\mbox{weight } 0 \mbox{ \mult}\\
\hline
A_n,&n>1,(n,p)\neq (2,3)&\om_1+\om_n& \begin{cases} n&\mbox{ if }p\!\not|(n+1)\\ n-1&\mbox{ if } p|(n+1)\end{cases}\\
&&& \\
A_3& p>3&2\om_2& 2     \\

 \hline
B_n &n>2,p>2& \om_2&  n \\
&&&\\
&-&2\om_1&\begin{cases}n&\mbox{ if } p|(2n+1)\\  n+1&\mbox{ if } p\!\not|(2n+1)\end{cases}   \\
\hline
C_n&n>1&2\om_1 &n\\
& n>2, (n,p)\ne (3,3)&\om_2& \begin{cases}n-2& \mbox{ if } p|n\\  n-1& \mbox{ if } p\!\not|n\end{cases}\\
C_2&p\ne5&2\om_2&2\\
C_4&p\ne 2,3&\omega_4&2\\
\hline
D_n&n>3,~p>2&2\om_1&  \begin{cases}n-2 &\mbox{ if } p|n\\  n-1& \mbox{ if }
p\!\not|\,n\end{cases}\\
 && \om_2& n\\
 D_n&n>3, p=2& \om_2&n-\gcd(2,n) \\
\hline
E_6&&\om_2&\begin{cases}5 &\mbox{ if } p=3\\  6& \mbox{ if } p\neq 3\end{cases} \\
\hline 
E_7&&\om_1 &\begin{cases}7 &\mbox{ if } p\ne2\\  6& \mbox{ if } p=2\end{cases}\\
 \hline
E_8&&\om_8 & 8\\
\hline
F_4&&\om_1 &\begin{cases}2& \mbox{ if } p=2\\  4 &\mbox{ if } p\ne2\end{cases}   \\
 & p\neq 3&\om_4 &2 \\
\hline
G_2& p\neq 3&\om_2&2\\
\hline
\end{array}$$
\caption{Irreducible $p$-restricted $G$-modules with non-zero
weights of multiplicity 1 and whose zero weight has \mult 
greater than $1$.}\label{tab:omega2}
\end{table}

\section{Initial reductions}\label{sec:red}
Let $H$ be a simple algebraic group over $F$ and let $G$ be a maximal closed 
connected subgroup of $H$ which contains 
a regular torus of $H$. If $G$ is not reductive, then \cite{BT} implies that 
$G$ is a maximal parabolic subgroup of $H$ and 
hence contains a maximal torus of $H$. Henceforth we will restrict our attention to 
reductive subgroups $G$.

Consider now the case where $H$ is a classical type simple algebraic group
with natural module $V$. 
We use the general reduction theorem,  \cite[Thm. 3]{Seitzclass}, on maximal closed connected subgroups of $H$;
this allows us to restrict our considerations to irreducibly acting subgroups of $H$.
For a detailed
discussion of this see \cite[\S18]{MTbook}.

Let $G$ be maximal among closed connected subgroups. Then one of the following holds:
\begin{enumerate}
\item $G$ contains a maximal torus of $H$.
\item $H$ is of type $D_m$, $V = U\oplus U^\perp$, $U$ is an
odd-dimensional non-degenerate subspace with respect to the bilinear form on $V$, $2\leq\dim U\leq \dim V-2$, and 
$G = {\rm Stab}_H(U)^\circ$.
\item $V=V_1\otimes V_2$, each of $V_1$ and $V_2$ is equipped with either the zero form (in case $V$ has no non-degenerate 
$H$-invariant form),
or a non-degenerate bilinear or quadratic form, and the form on $V$ is obtained as the product form. Moreover,
 $G$ is the connected component of $({\rm Isom}(V_1) \otimes{\rm Isom}(V_2))\cap H$. Note that if $V$ is equipped with a 
quadratic form and ${\rm char}(F)=2$, then $G={\rm Sp}(V_1)\otimes {\rm Sp}(V_2)$. 
\item $G$ is simple
acting irreducibly and tensor indecomposably on $V$.
\end{enumerate}

In the first three cases, it is straightforward to show that $G$ contains a regular torus of $H$. Hence we are reduced to
 considering simple subgroups which act irreducibly and tensor indecomposably on $V$.


\section{Irreducible representations of simple algebraic\\ groups 
whose  non-zero weights are of \mult 1}\label{sec:omega2}

In this section, we obtain the classification of irreducible representations
all of whose non-zero weights have multiplicity 1, thereby establishing Theorem~\ref{mt11}.
Throughout this section we take $G$ to be a simply connected simple algebraic group over $F$.
The rest of the notation will be as fixed in Section~\ref{sec:intro}. As discussed in Section~\ref{sec:intro}, 
we will require the following classification.

\begin{propo}\label{sz7}
Let $\lambda\in X(T)$ be a non-zero dominant weight.
\begin{enumerate}[]
\item{\rm (1)} Assume in addition that $\lambda$ is $p$-restricted. 
 Then all weights of
$L_G(\lam)$ are of \mult $1$ \ii $\lam$ is as
in the second column of Table~$\ref{tab:omega1}$.  In other words,  the set $\Omega_1(G)\setminus \{0\}$
is as given in Table~$\ref{tab:omega1}$. 
\item{\rm (2)} Suppose that $p>0$ and $\lam$ is not $p$-restricted, so
 $\lam=\sum_{i=0}^k
p^i\lam_i$, where $\lam_i$ are $p$-restricted and $\lambda_i\ne 0$ for some $i>0$. Then
all weights of $L_G(\lam)$ are of \mult $1$ if and only if  the following hold:
\begin{enumerate}[] 
\item{\rm (a)} for all $0\leq l\leq k$, $\lam_l=0$ or
$\lam_l\in\Omega_1 (G)$,   and 
\item{\rm (b)} for all $0\leq l< k$, 

\quad $(\lam_l,\lam_{l+1})\neq (\om_n,\om_{1})$ if $p=2$,
$G=C_n$;

\quad $(\lam_l,\lam_{l+1})\neq (\om_1,\om_{1})$ if $p=2$,
$G=G_2$;

\quad $(\lam_l,\lam_{l+1})\neq (\om_2,\om_{1})$ if $p=3$,
$G=G_2$.
\end{enumerate}
\end{enumerate}
\end{propo}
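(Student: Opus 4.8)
Part~(1) and part~(2) call for different methods; the plan is to reduce (1) to the characteristic-zero classification refined by the structure theory of Weyl modules, and (2) to the Steinberg tensor product theorem. The basic inequality underlying everything is that $L_G(\lambda)$ is a quotient of the Weyl module $V(\lambda)$, so $\dim L_G(\lambda)_\mu \leq \dim V(\lambda)_\mu$ for every $\mu$, and the latter is independent of $p$ (Weyl's character formula) and computable by Freudenthal's recursion.

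For part~(1) I would first settle the characteristic-zero case: find all dominant $\lambda$ for which the irreducible $L_G(\lambda) = V(\lambda)$ has all weight multiplicities equal to $1$. The main reduction tool is restriction to Levi subgroups: if all weights of $L_G(\lambda)$ have multiplicity $1$, the same holds for $L_{L'}(\lambda|_{L'})$ for every Levi $L$, since by Smith's theorem $L_{L'}(\lambda|_{L'})$ embeds as a submodule (the $U$-fixed points) of $L_G(\lambda)|_{L'}$. Combining the constraints obtained from Levi subgroups of small semisimple rank with direct evaluations of Freudenthal multiplicities yields the characteristic-zero list: the minuscule weights, $a\omega_1$ for $A_1$, and the natural- or minuscule-type modules for $G_2$, $E_6$, $E_7$ --- exactly the rows of Table~$\ref{tab:omega1}$ carrying no condition on $p$. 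The remaining rows are the genuinely modular examples, where $V(\lambda)$ is reducible and the irreducible quotient has strictly smaller weight multiplicities: for the families $c\omega_i + (p-1-c)\omega_{i+1}$ in type $A_n$ and $\omega_{n-1} + \frac{p-3}{2}\omega_n$, $\frac{p-1}{2}\omega_n$ in type $C_n$, together with the sporadic cases $(C_3,\omega_3)$, $(F_4,\omega_4)$ for $p=3$, $(G_2,\omega_2)$ for $p=3$, and $(C_n,\omega_n)$ for $p=2$, I would identify the maximal submodule of $V(\lambda)$ and its composition factors using the Jantzen sum formula and linkage, then read off the weight multiplicities of $L_G(\lambda)$. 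Conversely, to exclude every remaining candidate weight, Premet's theorem \cite{Premet88} --- which identifies the weight support of $L_G(\lambda)$ with the set of subdominant weights of $\lambda$ outside a short list of small-rank, small-characteristic exceptions --- gives effective lower bounds on weight multiplicities, disposing of the infinitely many non-examples; the exceptions to Premet's theorem are then checked by hand.

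For part~(2), Steinberg's tensor product theorem gives $L_G(\lambda) = L_G(\lambda_0) \otimes L_G(\lambda_1)^{(p)} \otimes \cdots \otimes L_G(\lambda_k)^{(p^k)}$, so $\dim L_G(\lambda)_\mu = \sum \prod_l \dim L_G(\lambda_l)_{\mu_l}$, summed over all decompositions $\mu = \sum_l p^l \mu_l$ with each $\mu_l$ a weight of $L_G(\lambda_l)$. If some $\lambda_l \neq 0$ is not in $\Omega_1(G)$, then by part~(1) $L_G(\lambda_l)$ has a weight $\nu$ of multiplicity $>1$; using $\nu$ in the $l$-th factor and the highest weight in all others produces a weight of $L_G(\lambda)$ of multiplicity $>1$, which proves the necessity of~(a). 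Granting~(a), a weight of $L_G(\lambda)$ of multiplicity $>1$ must come from a collision $\sum_l p^l \mu_l = \sum_l p^l \mu_l'$ between two distinct admissible decompositions; a $p$-adic carrying argument confines the discrepancy to two consecutive indices $l, l+1$ and shows a collision is possible only if the weight supports of $L_G(\lambda_l)$ and $L_G(\lambda_{l+1})$ are too large relative to $p$. Tabulating the weights of the modules in Table~$\ref{tab:omega1}$, one finds this occurs precisely for the three pairs recorded in~(b), and for each of them an explicit collision is exhibited; this establishes~(b) in both directions. Altogether this recovers the classifications of \cite{Seitzclass} and \cite{SZ1}.

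I expect the main obstacle to be the small-characteristic bookkeeping: in part~(1), determining the submodule structure of the Weyl modules attached to the modular families and the sporadic cases while simultaneously ruling out every neighbouring candidate weight; and in part~(2), proving that the three recorded pairs are \emph{precisely} the obstructive ones, which rests on sharp control of the weight supports of all the modules in Table~$\ref{tab:omega1}$.
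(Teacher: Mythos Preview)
The paper does not prove this proposition; its entire proof reads ``The result follows from \cite[6.1]{Seitzclass} and \cite[Prop.~2]{SZ1}.'' The statement is recorded as background, with part~(1) due to Seitz and part~(2) to Suprunenko--Zalesski.

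Your proposal is instead a sketch of how to reprove those cited results from scratch, and the outline broadly matches what is done in the sources: Seitz's \S6 proceeds by Levi restriction combined with direct computation of selected weight multiplicities (the computations that the present paper records as Lemma~\ref{wtlem1}), working case by case through the Dynkin types; and the non-$p$-restricted case in \cite{SZ1} is handled via Steinberg's tensor product theorem and exactly the $p$-adic collision analysis you describe. One small imprecision: Premet's theorem \cite{Premet88} determines the weight \emph{support} of $L_G(\lambda)$, not weight multiplicities, so it does not by itself produce lower bounds exceeding~$1$. Excluding the non-examples in part~(1) is done rather by exhibiting, for each candidate $\lambda$, a specific subdominant weight whose multiplicity in $L_G(\lambda)$ is computed to be at least~$2$ --- these are calculations of the type in Lemma~\ref{wtlem1}, typically carried out inside a rank-$2$ or rank-$3$ Levi or subsystem subgroup. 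Premet's theorem enters only to guarantee that the relevant subdominant weight actually occurs in $L_G(\lambda)$ once one knows it lies under $\lambda$.
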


\begin{proof} The result follows from {\rm \cite[6.1]{Seitzclass}} and \cite[Prop.2]{SZ1}.
\end{proof}

We now collect some results on dimensions of certain
weight spaces in infinitesimally irreducible $FG$-modules. The proofs can be found in 
\cite{Seitzclass}, \cite{Testexc} and
\cite{BGT}.

\begin{lemma}\label{wtlem1} Let $\lambda\in X(T)$ be a $p$-restricted
dominant weight.\begin{enumerate}[]
\item{\rm (1)} If $G$ is of type $A_n$ and
$\lambda=a\om_j+b\om_k$, with $ab\ne 0$, then the
multiplicity of the weight
$\lambda-\alpha_j-\alpha_{j+1}-\cdots-\alpha_k$ in $L_G(\lam)$
is $k-j+1$ unless $p\vert (a+b+k-j)$, in which case the
multiplicity is $k-j$. 

\item{\rm (2)} If $G$ is of type $A_n$ and $\lambda=c\om_i$
for some $1<i<n$ and $c>1$, then the multiplicity of the weight
$\lambda-\alpha_{i-1}-2\alpha_i-\alpha_{i+1}$ in $L_G(\lam)$ is
$2$ unless $c=p-1$ in which case the multiplicity is $1$.

\item{\rm (3)} If $G$ is of type $B_2$ and $\lambda=a\om_1+b\om_2$,
with $ab\ne 0$, the multiplicity of the weight
$\lambda-\alpha_1-\alpha_2$ in $L_G(\lam)$ is $2$ unless
$2a+b+2\equiv0\pmod p$ in which case the multiplicity is $1$.

\item{\rm (4)} If $G$ is of type $B_n$ with $\lambda=\om_1+\om_n$,
then the weight $\lambda-\alpha_1-\cdots-\alpha_n$ has
multiplicity $n$ in $L_G(\lam)$, unless $p\vert (2n+1)$ in
which case it has multiplicity $n-1$.

\item{\rm (5)} If $G$ is of type $G_2$ and $\lambda=a\om_1+b\om_2$,
with $ab\ne 0$, the multiplicity of the weight
$\lambda-\alpha_1-\alpha_2$ in $L_G(\lam)$ is $2$ unless
$3a+b+3\equiv0\pmod p$ in which case the multiplicity is $1$.

\item{\rm (6)} If $G$ is of type $A_n$ and $\lambda =
a\om_i+b\om_{i+1}+c\omega_{i+2}$, with $abc\ne0$ and
$a+b=p-1=b+c$,
 the weight
$\lambda-\alpha_1-\alpha_2-\alpha_3$ has multiplicity at least
$2$ in the module $L_G(\lam)$.

\item{\rm (7)} If $G$ is of type $D_4$ and $\lambda=a\om_1$, with $a>1$,
then the weight $\lambda-2\alpha_1-2\alpha_2-\alpha_3-\alpha_4$
has multiplicity at least $2$ in $L_G(\lam)$.

\item{\rm (8)} If $G$ is of type $G_2$,
$\lambda=b\om_1$ with $b>1$, and $p>3$, then the weight
$\lambda-2\alpha_1-\alpha_2$ has multiplicity $2$ in
$L_G(\lam)$.

\item{\rm (9)} If $G$ is of type $G_2$,
$\lambda=a\om_2$, with $p>3$ and $a=\frac{p-1}{2}$, then the
weight $\lambda-2\alpha_1-2\alpha_2$ has multiplicity $2$ in
$L_G(\lam)$.
\end{enumerate}
\end{lemma}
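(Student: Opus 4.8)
The plan is to compute each multiplicity in two stages: first in the Weyl module $V_G(\lambda)$ of highest weight $\lambda$, whose formal character is given by Weyl's formula and is hence characteristic-free, and then in $L_G(\lambda)=V_G(\lambda)/\operatorname{rad}V_G(\lambda)$ by controlling the composition factors of the radical. In every item the target weight $\mu$ is obtained from $\lambda$ by subtracting a very short combination of simple roots: a single positive root in (1), (3), (4), (5), (6), (8), and a sum of two positive roots in (2), (7), (9). Hence $m_{V_G(\lambda)}(\mu)$ is read off at once from Freudenthal's recursion or Kostant's partition function formula, and, when $G$ is of type $A_n$, equivalently by counting semistandard Young tableaux of the relevant rectangular, or rectangle-plus-rectangle, shape; this already gives the ``generic'' values $k-j+1,\ 2,\ n,\ 2$ of (1)--(5), the value $4$ underlying (6), and the values appearing in (7)--(9). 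Moreover, by Premet's theorem \cite{Premet88} (which applies under the hypotheses imposed in each item), $\mu$ remains a weight of $L_G(\lambda)$, so the content is precisely the value of $m_{L_G(\lambda)}(\mu)$. Since $\mu$ lies so near $\lambda$, the only composition factors $L_G(\xi)$ of $\operatorname{rad}V_G(\lambda)$ that can meet the $\mu$-weight space are those with $\xi$ subdominant to $\lambda$ and $\lambda-\xi$ a combination of at most the same simple roots --- a short finite list --- and once it is pinned down one has $m_{L_G(\lambda)}(\mu)=m_{V_G(\lambda)}(\mu)-\sum_\xi [\operatorname{rad}V_G(\lambda):L_G(\xi)]\,m_{L_G(\xi)}(\mu)$.

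The real content is therefore to determine which $L_G(\xi)$ sit near the head of $\operatorname{rad}V_G(\lambda)$, and with what multiplicity. I would extract this from the Jantzen sum formula for $V_G(\lambda)$, $\sum_{i>0}\operatorname{ch}V_G(\lambda)^i=\sum_{\alpha>0}\sum_{0<mp<\langle\lambda+\rho,\alpha^\vee\rangle}\nu_p(mp)\,\chi(s_{\alpha,mp}\cdot\lambda)$, where $\chi$ is the Weyl character operator and $\nu_p$ the $p$-adic valuation. For (1), (3), (4), (5) a one-line computation of $\langle\lambda+\rho,\alpha^\vee\rangle$ for the root $\alpha=\lambda-\mu$ shows that $s_{\alpha,mp}\cdot\lambda=\mu$ for an admissible $m$ exactly when $p\mid(a+b+k-j)$ in (1), $p\mid(2a+b+2)$ in (3), $p\mid(2n+1)$ in (4), $p\mid(3a+b+3)$ in (5); in that event a single term $\chi(\mu)=\operatorname{ch}V_G(\mu)$, with one-dimensional top weight-space, enters the sum, so $m_{L_G(\lambda)}(\mu)$ drops by exactly one to the asserted value, whereas if the congruence fails the formula shows $\operatorname{ch}V_G(\lambda)$ and $\operatorname{ch}L_G(\lambda)$ agree down through level $\mu$ and the generic value persists. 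Item (2) is the wall case $\lambda=c\omega_i$ with $\langle\lambda+\rho,\alpha_i^\vee\rangle=p$ (i.e. $c=p-1$); here one tracks the roots through $\alpha_i$ to see that exactly one composition factor carrying the weight $\mu$ is peeled off precisely when $c=p-1$. The $G_2$ items (8) ($\lambda=b\om_1$) and (9) ($\lambda=a\om_2$, $a=\tfrac{p-1}{2}$, $p>3$) are handled the same way: using $p>3$, the relevant coroot pairing is not congruent to the value that would produce a Jantzen contribution at $\mu$, so the Weyl-module multiplicity survives intact. For the type-$A_n$ items (1), (2), (6) one may bypass the sum formula entirely: realize $V_G(\lambda)$ inside the appropriate exterior or symmetric power of the natural module, write down the Carter--Lusztig spanning set of the $\mu$-weight space together with its straightening relations, and compute the rank of the resulting integer matrix modulo $p$.

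The lower-bound items (6) and (7) are cleanest handled directly: one exhibits two linearly independent weight vectors of weight $\mu$ in $L_G(\lambda)$ --- for example among the vectors obtained by applying products of the root-lowering operators $f_i$ (for the three, respectively four, simple roots involved) to the highest weight vector --- and shows that they remain independent modulo the defining relations of $L_G(\lambda)$. In (6) this is consistent with the sum formula, which under $a+b=p-1=b+c$ exhibits $L_G(\lambda-\alpha_i-\alpha_{i+1})$ and $L_G(\lambda-\alpha_{i+1}-\alpha_{i+2})$ in $\operatorname{rad}V_G(\lambda)$ (each detected because the relevant coroot pairing equals $p+1$), accounting for the drop of the multiplicity of $\mu$ from $4$ to $2$. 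For (7) ($D_4$, $\lambda=a\om_1$, $a>1$, where $\mu=(a-2)\om_1$), where the sum-formula bookkeeping is heaviest, I would carry out the direct construction inside a convenient maximal-rank subgroup of $D_4$, or invoke the explicit small-rank data recorded in \cite{Seitzclass}, \cite{Testexc}, \cite{BGT}.

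The step I expect to be the main obstacle is exactly this bookkeeping in the Jantzen sum formula for the non-type-$A$ cases and for (2): one must verify that no \emph{other} positive root $\beta$ and multiple $m$ contribute a Weyl character whose support also contains $\mu$ (which would change the count), and one must handle with care the situations where $\lambda+\rho$ lies on a reflection hyperplane --- as in (2), and at the divisibility boundaries in (3), (4), (5) --- where certain Weyl characters in the sum vanish or appear with a sign, so that the naive count must be corrected. For type $A_n$ the tableau-plus-straightening route sidesteps all of this and is the most transparent; for $B_2$, $B_n$, $D_4$ and $G_2$ a careful case-by-case analysis, cross-checked against the module data already available in the literature, appears to be the only reliable approach.
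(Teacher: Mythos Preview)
Your proposal is methodologically sound and essentially reconstructs what lies behind the paper's proof, which consists purely of citations (to \cite{Seitzclass}, \cite{Testexc}, \cite{BGT}, \cite{Premet88}). The cited arguments in those sources do proceed along the lines you sketch: compute the Weyl-module multiplicity via Freudenthal/Kostant or tableau methods, then control the passage to $L_G(\lambda)$ either by an explicit spanning-set/straightening computation (this is what Seitz does in type $A$) or by the Jantzen sum formula. So your route is not genuinely different from the paper's; it is simply the paper's proof unpacked.

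There is one concrete inaccuracy worth flagging. For item (8) you assert that ``the relevant coroot pairing is not congruent to the value that would produce a Jantzen contribution at $\mu$''. This is false when $b=p-2$. With $\beta=2\alpha_1+\alpha_2$ (short, so $\beta^\vee=2\alpha_1^\vee+3\alpha_2^\vee$) one has $\langle\lambda+\rho,\beta^\vee\rangle=2b+5$, and for $b=p-2$ the choice $mp=2p$ gives $s_{\beta,2p}\cdot\lambda=\mu$, contributing $+\chi(\mu)$ to the Jantzen sum. The reason the multiplicity nonetheless stays at $2$ is a cancellation: the root $\alpha_1+\alpha_2$ with $mp=p$ gives $s_{\alpha_1+\alpha_2,p}\cdot\lambda=\lambda-2\alpha_1-2\alpha_2$, which lies on the $\alpha_2$-wall shifted by $\rho$ and reflects to $\mu$ with a sign, contributing $-\chi(\mu)$. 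So the net contribution at $\mu$ is zero and the Weyl-module value $2$ survives. This is precisely the ``sign/cancellation'' phenomenon you anticipate in your final paragraph, but you should know it is not hypothetical: it actually occurs here (and analogous care is needed in (9)). The underlying reason such cancellations are forced is that the Jantzen sum, being a positive combination of characters of the $V^i$, must have nonnegative coefficients on each $\chi(\nu)$ when expanded in Weyl characters of dominant weights; a lone $+\chi(\mu)$ with nothing below it to absorb would already contradict Premet's theorem if it forced the $\mu$-multiplicity below $1$.
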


\begin{proof} Part (a) is \cite[8.6]{Seitzclass}, (c) and (e) are
\cite[1.35]{Testexc}. For (b), see the proof of
\cite[6.7]{Seitzclass} and apply the main result of \cite{Premet88}. Part (d) is \cite[2.2.7]{BGT}. 
The proof of
(f) is contained in the proof of \cite[6.10]{Seitzclass}. Part (g) is
proved in \cite[6.13]{Seitzclass}. Finally, the proofs of (h) and (i) follow from the proof 
 in \cite[6.18]{Seitzclass}.\end{proof}

We now turn our attention to the determination of the set 
$\Omega_2(G)\setminus \Omega_1(G)$. 
Let $\lambda$ be a $p$-restricted dominant weight for the group $G$.
It will be useful to work inductively, restricting the representation $L_G(\lam)$ to
certain subgroups and applying the following analogue of \cite[6.4]{Seitzclass}.

\begin{lemma}\label{44}  Let 
 $X$ be a subsystem subgroup of $G$ normalized by 
$T$.  Let $\lambda\in \Omega_2(G)$. Let $L_X(\mu)$ be an $FX$-composition factor
of $L_G(\lambda)$, for some dominant weight $\mu$ in the character group of $X\cap T$.
 Then $\mu\in\Omega_2(X)$. 
\end{lemma}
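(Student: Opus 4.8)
The plan is to unwind the definitions and reduce the statement to a claim about $T$-weight multiplicities. Recall that $\Omega_2(G)$ consists of $p$-restricted dominant weights $\lambda$ such that all non-zero $T$-weights of $L_G(\lambda)$ have multiplicity $1$; so I must show that if $L_X(\mu)$ is a composition factor of $L_G(\lambda)\big|_X$ then $\mu$ is $p$-restricted and all non-zero weights of $L_X(\mu)$ occur with multiplicity $1$. The $p$-restrictedness of $\mu$ is immediate (or can be arranged): $X$ is a subsystem subgroup, so $X\cap T$ is a maximal torus of $X$ whose weight lattice is a quotient/subgroup of $X(T)$ compatible with the root data, and the weights of $L_G(\lambda)\big|_X$ lie in the convex hull of the $W_G$-orbit of a $p$-restricted weight; in particular any highest weight $\mu$ of an $X$-composition factor is subdominant (for $X$) to the restriction of $\lambda$, forcing $\mu$ to be $p$-restricted. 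This is exactly the mechanism of \cite[6.4]{Seitzclass}, which the lemma is modelled on, so I would simply cite that argument.

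The heart of the matter is the weight-multiplicity comparison. The key observation is that for a subsystem subgroup $X$ normalized by $T$, the restriction $L_G(\lambda)\big|_X$ decomposes as a direct sum of its $(X\cap T)$-weight spaces, and each such weight space for $X$ is a \emph{sum} of $T$-weight spaces of $L_G(\lambda)$ — namely those $T$-weights $\nu$ restricting to the given $(X\cap T)$-weight. More precisely, I would argue: let $\mu$ be a non-zero weight of $L_X(\mu)$. Since $L_X(\mu)$ is a composition factor of $L_G(\lambda)\big|_X$, the weight space $L_G(\lambda)\big|_X$ of weight $\mu$ is non-zero, and it contains the $\mu$-weight space of the composition factor $L_X(\mu)$; hence $\dim L_X(\mu)_\mu \le \dim \big(L_G(\lambda)\big|_X\big)_\mu = \sum_{\nu\mapsto\mu}\dim L_G(\lambda)_\nu$, where the sum is over $T$-weights $\nu$ of $L_G(\lambda)$ restricting to $\mu$ on $X\cap T$. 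Now if $\mu\ne 0$, then every such $\nu$ is itself non-zero (the zero weight of $T$ restricts to the zero weight of $X\cap T$), so by $\lambda\in\Omega_2(G)$ each term $\dim L_G(\lambda)_\nu$ is at most $1$. It remains to show that at most one $T$-weight $\nu$ restricts to a given non-zero $\mu$; this is the crucial point and is where $X$ being a \emph{subsystem} subgroup normalized by $T$ is used: the restriction map $X(T)\to X(X\cap T)$ is essentially the identity on the relevant sublattice since $X$ and $G$ share the torus $T$ (up to isogeny $X\cap T = T$), so distinct $T$-weights of $L_G(\lambda)$ remain distinct on $X\cap T$. Hence $\dim L_X(\mu)_\mu \le 1$, as required, so $\mu\in\Omega_2(X)$.

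I would write this up in two steps: first, reduce to the weight-space inequality using that composition factors inject (as $T$-modules) into suitable subquotients and that taking $(X\cap T)$-weight spaces is exact; second, verify the two numerical facts — that a nonzero $(X\cap T)$-weight pulls back only to nonzero $T$-weights, and that the pullback is injective on the weight lattice restricted to $L_G(\lambda)$ — both of which follow from $X$ being a subsystem subgroup normalized by $T$ (so that we may take $X\cap T=T$ after passing to the simply connected cover, which does not change $L_X(\mu)$ as an abstract module nor its weight multiplicities). The $p$-restrictedness of $\mu$ I would dispatch by citing the analogous argument in \cite[6.4]{Seitzclass}, noting that any highest weight of an $X$-composition factor is dominant and subdominant to $\lambda|_{X\cap T}$.

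I expect the main obstacle to be a clean bookkeeping of the torus identifications: $X\cap T$ need not equal $T$ on the nose (only up to isogeny), and one must be careful that "$p$-restricted" and "weight multiplicity" are unaffected. The honest fix is to replace $X$ by its simply connected cover $\tilde X$ and lift the action, under which $\tilde X\cap$(torus)$=T$; since $L_X(\mu)$ and $L_{\tilde X}(\mu)$ have the same weights with the same multiplicities, nothing is lost. Once this identification is pinned down, the rest is the short argument above, essentially a transcription of Seitz's \cite[6.4]{Seitzclass} with "all weights multiplicity $1$" replaced throughout by "all non-zero weights multiplicity $1$."
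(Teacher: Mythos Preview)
There is a genuine gap at the step you flag as ``the crucial point.'' You assert that distinct $T$-weights of $L_G(\lambda)$ remain distinct on $X\cap T$, reasoning that ``$X$ and $G$ share the torus $T$ (up to isogeny $X\cap T = T$).'' This is false whenever $X$ is a proper subsystem subgroup: $X\cap T$ is a maximal torus of $X$, so its rank is $\mathrm{rank}(X)$, which in the applications is strictly less than $\mathrm{rank}(G)$. The restriction map $X(T)\to X(X\cap T)$ therefore has nontrivial kernel, and many $T$-weights of $L_G(\lambda)$ collapse to the same $(X\cap T)$-weight. Your proposed fix---passing to the simply connected cover $\tilde X$---does nothing for this: $\tilde X$ still has rank $\mathrm{rank}(X)$, and its maximal torus cannot be identified with $T$. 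Consequently your inequality $\dim L_X(\mu)_\nu \le \sum_{\nu'\mapsto\nu}\dim L_G(\lambda)_{\nu'}$, while correct, does not by itself give a bound of~$1$.

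The idea you are missing, and which the paper supplies, is to work with an $F(XT)$-composition series rather than an $FX$-composition series, and to use the complementary torus $Z = C_T(X)^\circ$. Since $T$ normalises $X$, each factor $M'\cong L_X(\mu)$ is a $T$-submodule of $L_G(\lambda)$, and $Z$ acts on $M'$ by a \emph{fixed} scalar (being central in $XT$). Because $T=(X\cap T)\cdot Z$ up to finite index, each $(X\cap T)$-weight space of $M'$ is then a \emph{single} $T$-weight space of $M'$, say for the $T$-weight $\nu'$, with $\nu'|_{X\cap T}=\nu$. Hence $\dim L_X(\mu)_\nu=\dim(M')_{\nu'}\le\dim L_G(\lambda)_{\nu'}$, and $\nu\ne 0$ forces $\nu'\ne 0$. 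This localisation to a single composition factor is precisely what rescues the argument; your global inequality summed over all preimages cannot.
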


\begin{proof} The argument is completely analogous to the proof of \cite[Lemma 6.4]{Seitzclass}. 
Set $W: = L_G(\lambda)$.
Write $TX=XZ$, where  $Z=C_T(X)^\circ$. Let $0\subset M_1\subset\cdots\subset M_t=W$ be an 
$F(XT)$-composition series of $W$. Then there exists $i$ such that $L_X(\mu) \cong M_i/M_{i-1}$.
Now $M_i = M_{i-1}\oplus M'$ as $FT$-modules, $Z$ acts by scalars on $M'$ and
the set of $(T\cap X)$-weights in $M'$ (and their multiplicities) are precisely the same as in 
$L_X(\mu)$. Also, if $\nu$ is a non-zero weight of $L_X(\mu)$, then $\nu$ corresponds to a non-zero $T$-weight of $M'$. Therefore if $\nu$ is a $(T\cap X)$-weight occurring in $L_X(\mu)$ with
multiplicity greater than $1$, there exists a $T$-weight $\nu'$ such that $\dim(M')_{\nu'}\geq 2$.
So $\nu'=0$ and hence $\nu=0$. The result follows.\end{proof}

\begin{propo}\label{te1} The set of weights $\Omega_2(G)\setminus \Omega_1(G)$
is as given in Table~$\ref{tab:omega2}$. Moreover, the multiplicity of the zero weight in $L_G(\lambda)$ is
as indicated in the fourth column.
\end{propo}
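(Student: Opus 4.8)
The plan is to classify, group by group, the $p$-restricted dominant weights $\lambda$ for which all non-zero weights of $L_G(\lambda)$ have multiplicity $1$ but the zero weight has multiplicity $\geq 2$; that is, to pin down $\Omega_2(G)\setminus\Omega_1(G)$ precisely. The starting point is Proposition~\ref{sz7}, which already gives $\Omega_1(G)$; so the task is to enlarge this list by exactly those $\lambda\notin\Omega_1(G)$ whose \emph{only} offending weight is $0$. The overall scheme is a two-step sieve. First, for a given $G$ of rank $n$, use the dimension formulas of Lemma~\ref{wtlem1} (together with knowledge of dimensions of small modules and the structure of Weyl modules) to locate, for each $\lambda$ outside $\Omega_1(G)$, some \emph{non-zero} weight of multiplicity $\geq 2$ — this eliminates $\lambda$. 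Second, for the finitely many surviving candidates, verify that all non-zero weights really do have multiplicity $1$ and compute the multiplicity of the zero weight, to produce the fourth column of Table~\ref{tab:omega2}.

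For the elimination step, the engine is the restriction lemma, Lemma~\ref{44}: if $\lambda\in\Omega_2(G)$ and $X$ is a $T$-normalized subsystem subgroup, then every $FX$-composition factor of $L_G(\lambda)$ has highest weight in $\Omega_2(X)$. Run this inductively on the rank. Concretely, restrict to a Levi subgroup $X$ of type $A_{n-1}$ (or $A_1A_{n-2}$, $D_{n-1}$, etc., chosen adapted to $G$): a $p$-restricted $\lambda=\sum a_i\omega_i$ restricts to $L_X$-composition factors whose highest weights are the restrictions of $\lambda$ and of the subdominant weights of $L_G(\lambda)$, and by induction these must all lie in $\Omega_2(X)$. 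This forces severe constraints on the $a_i$: a weight of type $A_m$ must be (after Proposition~\ref{sz7} and the $A$-part of Table~\ref{tab:omega2}) essentially $a\omega_1$, $b\omega_n$, $\omega_i$, $c\omega_i+(p-1-c)\omega_{i+1}$, $\omega_1+\omega_n$, or $2\omega_2$ in $A_3$. Intersecting these constraints coming from various Levi subgroups leaves only a short list of candidates in each group. For those candidates that still are not yet known to be in $\Omega_1$, apply the relevant part of Lemma~\ref{wtlem1} directly: e.g. parts (1),(2) handle $A_n$ (the weight $\lambda-\alpha_j-\cdots-\alpha_k$ has multiplicity $k-j+1$ or $k-j$), part (4) handles $\omega_1+\omega_n$ in $B_n$, parts (3),(5),(7),(8),(9) handle the small-rank exceptional checks in $B_2$, $G_2$, $D_4$. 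If the non-zero weight singled out there has multiplicity $\geq 2$, $\lambda$ is killed; the borderline cases ($p\mid(a+b+k-j)$, $c=p-1$, $2a+b+2\equiv0$, etc.) are exactly the ones that drop into $\Omega_1(G)$ and so are already accounted for. This yields the precise second column of Table~\ref{tab:omega2}.

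The verification/computation step is then a finite check for each surviving $\lambda$. To confirm that all non-zero weights have multiplicity $1$: for the classical-group entries ($2\omega_1$ and $\omega_2$ for $B_n,C_n,D_n$; $\omega_1+\omega_n$ for $A_n$; $\omega_2$ for $E_6$; $\omega_1$ for $E_7,F_4$; $\omega_8$ for $E_8$; $\omega_2$ for $G_2$) the module $L_G(\lambda)$ is closely related to a well-understood small module — a section of $S^2V$ or $\Lambda^2 V$, or of the adjoint module / minimal module — whose weight structure is classical. For instance $\omega_1+\omega_n$ for $A_n$ is a section of the adjoint module $\mathfrak{sl}_{n+1}$, whose non-zero weights are the roots $\alpha_i-\alpha_j$, each of multiplicity $1$, and whose zero weight space is the Cartan subalgebra, of dimension $n$ or (when $p\mid n+1$) $n-1$. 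Similarly $2\omega_1$ for $C_n$ is $\mathrm{Sp}(V)$'s adjoint module; $\omega_2$ for $B_n,D_n$ is a section of $\Lambda^2 V$, of dimension $\binom{2n+\epsilon}{2}$ minus a trivial composition factor when $p$ divides the appropriate integer, and its zero-weight multiplicity follows from counting the zero-weight vectors $e_i\wedge f_i$ modulo the relation that survives. The $\omega_4$ cases in $C_4$ ($p\neq2,3$), $2\omega_2$ in $C_2$ ($p\neq5$), $2\omega_2$ in $A_3$ ($p>3$), and $\omega_4$ in $F_4$ ($p=3$) are explicit low-rank checks; dimensions and weight-multiplicity patterns of these specific modules are available in \cite{Testexc}, \cite{Seitzclass}, \cite{Lubeck}-type tables, or a direct Weyl-module computation. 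The zero-weight multiplicities in the fourth column are then read off: either from Lemma~\ref{wtlem1} in the borderline-$p$ cases, or from $\dim L_G(\lambda)$ combined with the known non-zero weight count (the non-zero weights are a union of $W_G$-orbits of known size, so $\dim L_G(\lambda)-(\text{number of non-zero weights})$ is the zero-weight multiplicity).

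The main obstacle, as usual in this kind of classification, is \emph{completeness of the elimination step in positive characteristic}: a weight $\lambda$ that looks innocuous can, for a sporadic small prime, have an unexpectedly large weight space, or conversely a module that generically has a multiplicity-$2$ non-zero weight can degenerate (the $p\mid(a+b+k-j)$ phenomenon in Lemma~\ref{wtlem1}(1)) so that the offending weight drops to multiplicity $1$ and $\lambda$ must be admitted after all. Controlling this requires that the dimension data in Lemma~\ref{wtlem1} genuinely covers every borderline case — which is why the lemma is stated with its exact congruence exceptions — and that for the finitely many ``small'' groups ($B_2$, $C_2$, $C_3$, $C_4$, $D_4$, $A_3$, $G_2$, $F_4$) one has done an exhaustive machine or hand check rather than relying on the asymptotic formulas. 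A secondary subtlety is that for non-simply-laced $G$ the subsystem-subgroup restriction in Lemma~\ref{44} must be applied carefully (short vs. long roots, and for $C_n$ the fact that we also want information from the $D_n$ or $A_1^n$ subsystems), so the induction hypothesis has to be set up so that all of $\Omega_2$ for lower-rank classical groups of \emph{both} types is already in hand before the rank-$n$ case is treated.
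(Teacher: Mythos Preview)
Your proposal is correct and follows essentially the same approach as the paper: induction on rank via Lemma~\ref{44} applied to suitable subsystem subgroups, combined with the explicit multiplicity data of Lemma~\ref{wtlem1}, to whittle down to the short list in Table~\ref{tab:omega2}, followed by a direct verification for the survivors. The paper's proof differs only in being more explicit about \emph{which} subsystem subgroups to use in each type (for instance, three distinct $C_2$ subgroups in the $C_3$ case, or a particular $D_4$ subsystem for $F_4$), and in relying on L\"ubeck's tables \cite{Lubeck,luebeck} for the final verification and zero-weight multiplicities rather than the structural descriptions (adjoint module, $\Lambda^2 V$, etc.) that you outline---but these yield the same information.
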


\begin{proof} We first note that for $G$ and $\lambda$ as in Table~\ref{tab:omega2}, 
the multiplicity of the zero weight in $L_G(\lambda)$ 
 can be deduced from 
\cite[Table 2]{Lubeck}. We now show that the list in Table~\ref{tab:omega2} contains
all weights in $\Omega_2(G)\setminus\Omega_1(G)$.
Let $\lambda\in\Omega_2(G)\setminus\Omega_1(G)$; in particular, 
$0$ must be subdominant to $\lambda$, and so $\lam$ lies in the root lattice. 
We will proceed as in
\cite[\S 6]{Seitzclass}. We apply Lemma~\ref{44} to various subsystem subgroups of $G$; all
of such are taken to be normalized by the fixed maximal torus $T$.

\noindent {\bf Case $\mathbf{A_3}$.} Consider first the case where 
$\lambda=b\om_2$. By the above remarks, $b>1$, and so we have $p>2$.
If $b=2$, the only weights subdominant to $\lambda$ are
$\lambda-\alpha_2$, which has multiplicity $1$ in $L_G(\lam)$
and $\mu = \lambda-\alpha_1-2\alpha_2-\alpha_3$ which is the zero
weight. Hence $\lambda\in\Omega_2(G)$ and by Lemma~\ref{wtlem1}(2), $\lambda\in\Omega_1(G)$ 
if and only if $p=3$. If $b>2$, the weight $\mu$ is a
non-zero weight and Lemma~\ref{wtlem1}(2) implies that
$b=p-1$, in which case $\lambda\in\Omega_1(G)$.

Now consider the general case
$\lambda=a\om_1+b\om_2+c\om_3$. Assume for the moment
that $abc\ne0$. Applying Lemma~\ref{wtlem1}(1), we see that
$a+b=p-1=c+d$. But then Lemma~\ref{wtlem1}(6) rules out this
possibility. Hence we must have $abc=0$. If $ab\ne 0$ as above we
have $a+b=p-1$ and $\lambda\in\Omega_1(G)$. The case $bc\ne 0$ is
analogous. If $b=0$ and
$ac\ne 0$,  Lemma~\ref{wtlem1}(1) implies that the weight
$\lambda-\alpha_1-\alpha_2-\alpha_3$ must be the zero weight and
hence $a=1=c$. This weight appears in Table~\ref{tab:omega2}. Finally, if $b=c=0$ or $a=b=0$, then $\lambda\in\Omega_1(G)$.  
This completes the case
$G=A_3$. \smallbreak
\noindent{\bf Case $\mathbf{A_n}$, $\mathbf{n\ne 3}$.} 
If $n=2$, Lemma~\ref{wtlem1}(1) and Proposition~\ref{sz7} give the result. 
So we now assume
$n>3$. Applying Lemma~\ref{44} to various $A_3$ Levi factors of $G$,
as well as the result of Lemma~\ref{wtlem1}(1) and (2), we are
reduced to configurations of the form:
\begin{enumerate}[a)]
\item $\lambda=\om_i$ for some $i$,
\item $\lambda=(p-1)\om_i$, for $1<i<n$,
\item $\lambda=a\om_i$, $i=1,n$,
\item $\lambda=c\om_i+d\om_{i+1}$, $cd\ne0$, $1\leq i<n$ and $c+d=p-1$,
\item $\lambda = \om_1+\om_n$.
\end{enumerate}
Each of these weights is included either in $\Omega_1(G)$ or in Table~\ref{tab:omega2}.
 \smallbreak
\noindent{\bf Case $\mathbf{C_2}$.} Let $\lambda=d\om_1+c\om_2$. The
arguments of \cite[6.11]{Seitzclass} together with
Lemma~\ref{wtlem1}(3) show that
either $\lambda\in\Omega_1(G)$ or one of the following
holds:
\begin{enumerate}[(a)]
\item $d=0$, $c>1$, $c\ne \frac{p-1}{2}$, and the weight $\lambda-2\alpha_1-2\alpha_2$ is the
zero weight.
\item $c=0$, $d>1$, and the weight $\lambda-2\alpha_1-\alpha_2$ is the
zero weight.
\item $cd\ne 0$, $d>1$, 
$2c+d+2\equiv 0\pmod p$ and $\lambda-\alpha_1-\alpha_2$ is the zero weight.
\end{enumerate}
Case (i) is satisfied only if $c=2$; (ii) is satisfied only if
$d=2$; (iii) is not possible. \smallbreak
\noindent{\bf Case $\mathbf{C_n}$, $\mathbf{n>2}$.} First suppose $n=3$ and let
$\lambda=a\om_1+b\om_2+c\om_3$. We apply Lemma~\ref{44} to three
different $C_2$ subsystem subgroups of $G$, namely $X_1$, the
Levi factor corresponding to the set $\{\alpha_2,\alpha_3\}$,
$X_2$, the conjugate of this group by the reflection
$s_1$, and $X_3 = X_1^w$, where $w =
s_1s_2$. Restricting $\lambda$ to $X_1$ gives that 
$$(b,c)\in\{(0,0), (1,0), (2,0), (0,1), (0,2), (1,\frac{p-3}{2}), (0,\frac{p-1}{2})\}.$$
 Note also that $\lambda|_{T\cap X_2}$ has highest weight
$(a+b)\mu_1+c\mu_2$, where $\mu_1,\mu_2$ are the
fundamental dominant weights corresponding
 to the base $\{\alpha_1+\alpha_2,\alpha_3\}$. 

Suppose first that $c\ne 0$,
so $b\in\{0,1\}$.
If $a+b<p$, we can again apply the $C_2$ result to the group $X_2$ to see that 
$a+b=0$ or $1$. On the other hand, if $a+b\geq p$,
we must have $b=1$ and $a=p-1$. This latter case is not possible
as Lemma~\ref{wtlem1}(1) implies that the non-zero weight
$\lambda-\alpha_1-\alpha_2$ has multiplicity $2$. Hence when $c\ne0$,  we have
$(a,b,c) = (0,0,c)$, or $(a,b,c)=(1,0,c)$, or $(a,b,c)=(0,1,\frac{p-3}{2})$.
 As the third possibility corresponds to a weight in $\Omega_1(G)$, 
we consider the first two possibilities. If
$(a,b,c)=(0,0,c)$, by Proposition~\ref{sz7}, we may assume $c\ne \frac{p-1}{2}$, 
and $c\ne1$. This leaves us with the weight $2\omega_3$, and $p\ne 5$. But then \cite{Lubeck} shows that a non-zero weight has multiplicity greater than 1. If $(a,b,c) = (1,0,c)$, 
then $c\in\{1,2,\frac{p-1}{2}\}$. The restriction 
of $\lambda$ to the subgroup $X_2$ is $\mu_1+c\mu_2$. But here Lemma~\ref{wtlem1}(3)
shows that the weight
 $\mu = \lambda-(\alpha_1+\alpha_2)-\alpha_3$ has multiplicity 2 unless 
$c=\frac{p-3}{2}$. Since $\mu$ is a non-zero weight, either $c=1$ and $p=5$ or
 $c=2$ and $p=7$. Again, we refer to \cite{Lubeck} to see that there is a non-zero weight
 with multiplicity greater than one in each case.

Suppose now $c=0$. Then the restriction to $X_1$ implies that $(a,b,c)$
is one of $(a,1,0)$, $(a,2,0)$, $(a,0,0)$. Suppose $(a,b,c)=(a,1,0)$. If $a=0$, 
then the only subdominant
weight in $L_G(\lam)$ is the $0$ weight and hence this gives an
example. If $a\ne0$, Lemma~\ref{wtlem1}(1) implies that $a=p-2$.
Then the restriction of $\lambda$ to $X_3$ is the weight
$a\eta_1+\eta_2$, where $\eta_1,\eta_2$ are the fundamental
dominant weights corresponding to the base
$\{\alpha_1,2\alpha_2+\alpha_3\}$. But then Lemma~\ref{wtlem1}(3)
implies that the non-zero weight
$\lambda-\alpha_1-2\alpha_2-\alpha_3$ occurs with multiplicity
$2$. If $(a,b,c)=(a,2,0)$, then the subdominant weight
$\lambda-2\alpha_2-\alpha_3$ has multiplicity $2$ and
hence this is not an example. Finally, if $(a,b,c)=(a,0,0)$, we
consider the restriction of $\lambda$ to the subgroup $X_3$ and
the $C_2$ result implies that $a=1$ or $a=2$. If $a=1$, then $\lambda\in\Omega_1(G)$, 
while if $a=2$, \cite[Table 2]{Lubeck} shows that $\lambda\in\Omega_2(G)$.  This completes the consideration of the
case $G=C_3$.

Consider now the general case where $n\geq 4$. If $\lambda=\sum a_i\om_i$ with 
$a_i=0$ for
$i\leq n-2$, then the $C_3$ result and Lemma~\ref{44} (applied to the standard $C_3$
Levi factor) implies that either $\lambda$ is one of the weights in 
Table~\ref{tab:omega1} or in Table~\ref{tab:omega2},  or 
$\lambda = \om_{n-1}$ or $\lambda=\om_n$, with $p\ne3$ in each case. 
If $\lambda=\om_{n-1}$, then we refer to \cite{Lubeck}, for the group
$C_4$,
 to see that the
subdominant weight $\om_{n-3}$ occurs with multiplicity $2$. This then shows that 
$\lambda=\om_{n-1}\not\in\Omega_2(G)$. Now if $\lambda = \om_n$, again use \cite{Lubeck} and find that
$\lambda\in\Omega_2(G)$
 when $n=4$, while if $n>4$, the weight 
$\om_{n-4}$ occurs with multiplicity 2 and so $\lambda\not\in\Omega_2(G)$.

We may now assume that there exists $i\leq n-2$ with $a_i\ne 0$.
Choose $i\leq n-2$ maximal with $a_i\ne 0$ and consider the $C_3$ subsystem
subgroup $X$ with root system base $\{\alpha_i+\cdots +\alpha_{n-2},
\alpha_{n-1},\alpha_n\}$, so that $\lambda|_{T\cap X} =
a_i\eta_1+a_{n-1}\eta_2+a_n\eta_3$, where
$\{\eta_1,\eta_2, \eta_3\}$ are the fundamental dominant
weights corresponding to the given base. As $a_i\ne 0$, the $C_3$
case considerations imply that $a_{n-1}+a_n=0$ and $a_i=1$ or $2$. If $i=1$,
$\lambda$ occurs in the statement of the result. If $i=2$, so
$\lambda=a_1\om_1+a_2\om_2$, then we may assume $a_1\ne
0$, or $a_2=2$, as $\lambda=\om_2$ occurs in the statement of
the result. But then the restriction of $\lambda$ to the $C_3$
subsystem subgroup with root system base
$\{\alpha_1,\alpha_2+\cdots+\alpha_{n-1},\alpha_n\}$ has
non-zero weights occurring with multiplicity greater than $1$.

So finally, we may assume $i>2$, and so $n\geq5$. If $n=5$ and so
$i=3$, we apply the result for $C_4$ to see that $a_2=0$ and $a_3=1$. But the non-zero
weight $\lambda-\alpha_2-2\alpha_3-2\alpha_4-\alpha_5$ has
multiplicity at least $2$ by the $C_4$ result. Hence the result holds for the case
$n=5$. If $n\geq 6$, we consider the $C_5$ subsystem subgroup
whose root system has base
$\{\alpha_{i-2},\alpha_{i-1},\alpha_i+\cdots+\alpha_{n-2},\alpha_{n-1},\alpha_n\}$
and obtain a contradiction.\smallbreak

\noindent{\bf Case $\mathbf{D_n}$.} Suppose first that $n=4$. Let $\lambda=\sum a_i\om_i$. If $a_2=0$,
Lemma~\ref{wtlem1}(1) implies that $\lambda=a_i\om_i$ for
$i=1$, $3$ or $4$; assume by symmetry that $i=1$. Then
Lemma~\ref{wtlem1}(7) shows that either $a_1=1$ or
$\lambda-2\alpha_1-2\alpha_2- \alpha_3-\alpha_4$ is the zero
weight. In either case, $\lambda$ is as in the statement of the result.
So we now assume $a_2\ne 0$. Using the result for $A_3$, applied
to the three standard $A_3$ Levi factors, we see that at most one
of $a_1,a_3,a_4$ is non-zero.
 However if $a_1+a_3+a_4\ne 0$, Lemma~\ref{wtlem1}(1) and (2) provide a
contradiction. Hence
$\lambda=a_2\om_2$, and since $\lambda=\omega_2$ is in Table~\ref{tab:omega2}, we
may assume $a_2>1$. Now $\lambda-\alpha_1-2\alpha_2-\alpha_3$ is a non-zero weight,
so Lemma~\ref{wtlem1}(2) implies that $a_2=p-1$. But we now apply Lemma~\ref{44}
to the $A_3$ subsystem
subgroup with root system having as base
$\{\alpha_2,\alpha_1,\alpha_2+\alpha_3+\alpha_4\}$ to obtain a contradiction.

Now consider the general case where $n>4$. We argue by induction
on $n$. Apply the result for $D_{n-1}$ to the standard $D_{n-1}$
Levi factor of $G$ to see that either $\lambda$ is as in the
statement of the result or $\lambda = a\om_1$,
$a\om_1+\om_2$, $a\om_1+\om_{n-1}$,
$a\om_1+\om_n$, $a\om_1+\om_3$, or
$a\om_1+2\om_2$. Now consider the $D_4$ subsystem
subgroup whose root system has base
$\{\alpha_1,\alpha_2+\cdots+\alpha_{n-2},\alpha_{n-1},\alpha_n\}$.
The result for $D_4$ then implies that either $\lambda$ appears in
 Table~\ref{tab:omega1} or Table~\ref{tab:omega2}, 
or $\lambda = \om_3$. But then the restriction of $\lambda$ to
the standard $D_{n-1}$ Levi factor $X$ affords a composition factor
which is the unique nontrivial composition factor of ${\rm Lie}(X)$. The weight corresponding to the zero weight in
${\rm Lie}(X)$ is the weight $\lambda-\alpha_1-2(\alpha_2+\cdots+\alpha_{n-2})-\alpha_{n-1}-\alpha_n$,
 which is a non-zero weight in
$L_G(\lam)$ with multiplicity at least $n-3$. (See \cite[Table 2]{Lubeck}.) This completes
the consideration of type $D_n$.\smallbreak

\noindent{\bf Case $\mathbf{B_n}$.} As throughout the paper, we assume $p>2$ when $G$ 
has type $B_n$. Consider first the
case $n=3$. We apply the result for $C_2$ to the standard $C_2$ Levi factor
of $G$ and the $A_3$ result to the subsystem subgroup $X$ with
root system base $\{\alpha_2+2\alpha_3, \alpha_1,\alpha_2\}$. Note that if $\lambda = a\om_1+b\om_2+c\om_3$, 
then the restriction of $\lambda$ to $X$  affords a composition
factor with highest weight $(b+c)\eta_1+a\eta_2+b\eta_3$, where $\{\eta_1,\eta_2,\eta_3\}$ is the set of 
fundamental dominant weights dual to the given base.  We deduce
that either $\lambda$ appears in Table~\ref{tab:omega1} or in Table~\ref{tab:omega2} or
$\lambda = \om_1+\om_3$, $2\om_3$, 
$(p-3)\om_1+2\om_3$, or $(p-2)\om_1+\om_3$. The first case is ruled out by
Lemma~\ref{wtlem1}(4). For the second and third, where $p>2$, we see that the
restriction of $ \lambda$ to the standard $C_2$ Levi factor affords a
composition factor isomorphic to the Lie algebra of the Levi
factor, in which the non-zero weight $\lambda-\alpha_2-2\alpha_3$
has multiplicity $2$. For the final case, when $\lambda = (p-2)\om_1+\om_3$,
 consider the $C_2$ 
subsystem subgroup $X^{s_1}$,
with root system base $\{\alpha_1+\alpha_2,\alpha_3\}$, for which $\lambda$ affords a 
composition factor with
highest weight $(p-2)\zeta_1+\zeta_2$ (where $\zeta_1,\zeta_2$ are the fundamental dominant weights with respect to the 
 base $\{\alpha_1+\alpha_2,\alpha_3\}$), 
contradicting Lemma~\ref{44}. This completes the consideration of $G=B_3$.

Consider now the general case $n\geq 4$. Let $\lambda=\sum a_i\omega_i$. 
By considering the restriction of $\lambda$ to the
standard $B_3$ Levi subgroup, we see that $a_{n-1}+a_n<p$. Now
consider the maximal rank $D_n$ subsystem subgroup $W$ with root
system base
$\{\alpha_1,\dots,\alpha_{n-2},\alpha_{n-1},\alpha_{n-1}+2\alpha_n\}$.
The above remarks imply that $\lambda|_{T\cap X}$ is a
$p$-restricted weight and the  result for $D_n$ then gives the
result for $B_n$.\smallbreak

\noindent{\bf Case $\mathbf{E_n}$.} For $n=6$, we apply
the $D_n$ result to the two standard $D_5$ Levi subgroups of $G$
and the $A_n$ result to the standard $A_5$ Levi subgroup
 of $G$ to see that either $\lambda$ is as in Table~\ref{tab:omega2}
or $\lambda=\om_3$, $\om_5$, $2\om_1$ or $2\om_6$.
In the first two cases, the restriction of $\lambda$ to one of the
$D_5$ Levi factors affords a composition factor which is 
isomorphic to the unique nontrivial composition factor of
the Lie algebra of the Levi factor. But the zero
weight in this composition factor corresponds to a non-zero weight
with multiplicity at least $4$. In the last two configurations, we
note that the $D_5$ composition factor afforded by $\lambda$ has
zero as a subdominant weight of multiplicity at least $3$ (here we
use \cite{luebeck}). But this subdominant
weight is a non-zero weight with respect to $T$.

Now for $n=7$, we apply the result for $E_6$ as well as for
$D_{6}$ and $A_{6}$ to see that either $\lambda$ is as in the
statement of the result or $\lambda = \om_6$ or $2\om_7$.
These two configurations can be ruled out exactly as in the case
of $E_6$. The case of $G=E_8$ is completely analogous.\smallbreak

\noindent{\bf Case $\mathbf{F_4}$.} For this case, we
use the standard $B_3$ and $C_3$ Levi factors and the maximal rank
$D_4$ subsystem subgroup whose root system base is
$\{\alpha_2+2\alpha_3+ 2\alpha_4,
\alpha_2,\alpha_1,\alpha_2+2\alpha_3\}$. This leads immediately to
the result.\smallbreak

\noindent{\bf Case $\mathbf{G_2}$.} Let
$\lambda=b\om_1+a\om_2$, where
$3(\alpha_1,\alpha_1)=(\alpha_2,\alpha_2)$. We first treat the
cases where $p=2$ or $p=3$ by referring to \cite{luebeck} to see
that $\lambda$ is as in Table~\ref{tab:omega2} or
$\lambda=2\om_1+2\om_2$ and $p=3$. But then
 Lemma~\ref{wtlem1}(5) shows that the non-zero weight $\lambda-\alpha_1-\alpha_2$ has
 multiplicity $2$. So we may now assume $p>3$. We will consider the restriction
of $\lambda$ to $X$, the $A_2$ subsystem subgroup corresponding to
the long roots in $\Phi(G)$; we have $\lambda|_{T\cap X} =
(a+b)\eta_1+a\eta_2$, where $\{\eta_1, \eta_2\}$ are the
fundamental dominant weights for $X$.

Assume that $\lambda$ is not as in Table~\ref{tab:omega2}, so
$\lambda\ne \om_i$, $i=1,2$. Now Lemma~\ref{wtlem1}(8) implies
that $a\ne 0$. Consideration of the action of $X$, together with
Lemma~\ref{wtlem1}(1), implies that either $a+b\geq p$ or
$2a+b+1\equiv 0\pmod p$. Now if $b=0$, we must be in the second
case, so $a=\frac{p-1}{2}$; in particular, $a>1$. But then
Lemma~\ref{wtlem1}(9) gives a contradiction. Hence we must have
$b\ne 0$. Then Lemma~\ref{wtlem1}(5) implies that
$3a+b+3\equiv0\pmod p$ and either $2a+b+1\equiv0\pmod p$ or $a+b\geq
p$. In the first case we have $a=p-2$ and $b=3$; in the second
case we must have $b\geq 2$. So in either case
$\lambda-2\alpha_1-\alpha_2$ is a subdominant weight. Arguing as
in \cite[6.18]{Seitzclass}, we see that the non-zero weight $\lambda-2\alpha_1-\alpha_2$ has
multiplicity $1$ only if $6a+4b+8\equiv0\pmod p$, which together
with the previous congruence relation implies that $b=p-1$, and
hence $3a+2\equiv0\pmod p$. We are now in the situation where
$a+b\geq p$; indeed, as $a=\frac{p-2}{3}$ or $\frac{2p-2}{3}$, we
have that either $p=5$, $a=1$, $b=4$ and \cite{luebeck} gives a
contradiction, or $\lambda|_{T\cap X} = p\eta_1 +
(c\eta_1+d\eta_2)$, with $cd\ne0$, $(c,d)=
(\frac{p-5}{3},\frac{p-2}{3})$ or
$(\frac{2p-5}{3},\frac{2p-2}{3})$. In each case, the result for
$A_2$ leads to a contradiction.

It remains to verify for each weight $\lambda$ in Table~\ref{tab:omega2} that all
non-zero weights of $L_G(\lambda)$ do indeed have multiplicity 1. This
is straightfoward using \cite{Lubeck} and \cite{luebeck}.\end{proof}

The following corollary is immediate.

\begin{corol} Let $\lambda\in X(T)$. If all non-zero weights of $L_G(\lambda)$ occur with multiplicity $1$,
then the zero weight occurs with multilicity at most ${\rm rank}(G)$.
\end{corol}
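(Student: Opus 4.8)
The plan is to read the corollary off the classification already obtained in this section, via Theorem~\ref{mt11}(2) and Proposition~\ref{te1}.

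First I would reduce to a $p$-restricted weight. We may assume $\lambda\neq 0$ and that not all weights of $L_G(\lambda)$ have multiplicity $1$; otherwise the zero weight has multiplicity at most $1\leq {\rm rank}(G)$ and there is nothing to prove. Then Theorem~\ref{mt11}(2) applies and yields $\lambda=p^{k}\mu$ for some integer $k\geq 0$ and some ($p$-restricted) weight $\mu$ occurring in Table~\ref{tab:omega1} or Table~\ref{tab:omega2}, with $k=0$ when ${\rm char}(F)=0$. If $k>0$, then $L_G(\lambda)$ is obtained from $L_G(\mu)$ by twisting the $G$-action with the $k$-th power of the Frobenius morphism; hence for each weight $\nu$ the $\nu$-weight space of $L_G(\mu)$ is carried to the $(p^{k}\nu)$-weight space of $L_G(\lambda)$, of the same dimension. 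Taking $\nu=0$ shows that the zero weight of $L_G(\lambda)$ has the same multiplicity as the zero weight of $L_G(\mu)$, so it suffices to treat $L_G(\mu)$.

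Next, $\mu$ cannot lie in Table~\ref{tab:omega1}: by Proposition~\ref{sz7}(1) that would force all weights of $L_G(\mu)$, and therefore of $L_G(\lambda)$, to have multiplicity $1$, contradicting our reduction. Hence $\mu$ appears in Table~\ref{tab:omega2}, and by Proposition~\ref{te1} the multiplicity of its zero weight equals the value displayed in the fourth column of that table. What remains is the routine line-by-line check that every such value is at most ${\rm rank}(G)$; the maximum, ${\rm rank}(G)$ itself, is attained by several of the listed modules (for instance the adjoint modules $L_{B_n}(\omega_2)$, $L_{C_n}(2\omega_1)$ and $L_{D_n}(\omega_2)$ in odd characteristic), and no entry exceeds it.

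I do not expect any genuine obstacle: the content is entirely contained in Theorem~\ref{mt11} and Proposition~\ref{te1}, and what is left is a finite arithmetic comparison with Table~\ref{tab:omega2}. It is perhaps worth remarking, though, that some appeal to the classification does seem necessary here, since $\dim L_G(\lambda)$ is unbounded and so the one-dimensionality of the non-zero weight spaces gives no direct bound on $\dim L_G(\lambda)_{0}$, which is a priori merely a module for the Weyl group $W_G$.
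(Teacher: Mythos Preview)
Your argument is correct and is exactly the approach the paper intends: the corollary is stated as ``immediate'' from Proposition~\ref{te1}, i.e.\ from inspecting the fourth column of Table~\ref{tab:omega2}, which is precisely what you do after reducing to a $p$-restricted weight. The only point worth flagging is organizational: you invoke Theorem~\ref{mt11}(2) to handle non-$p$-restricted $\lambda$, but in the paper this is only established later as Theorem~\ref{w1t}; so either restrict the corollary to $p$-restricted $\lambda$ here, or note that the full statement follows once Theorem~\ref{w1t} is in place.
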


Note that lower bounds for the maximal weight multiplicities in irreducible representations of $G$ are studied in \cite{BOS1,BOS2}. The above corollary does not however follow from their results.

\begin{corol}\label{dt1} Let $\lambda\in\Omega_2(G)$. Then
one of the following holds:\begin{enumerate}[]
\item{\rm (1)} $\lambda\in\Omega_1(G)$. 
\item{\rm (2)} $L_G(\lam)$ is the unique nontrivial composition factor of ${\rm Lie}(G)$.
\item{\rm (3)} $(G,\lambda)$ is one of $(A_3,2\om_2)$, $(B_n,2\om_1)$,
$(C_n,\om_2)$, $(C_2,2\om_2)$, $(C_4,\om_4)$, $(D_n,2\om_1)$,
$(F_4,\om_4)$, where $p\neq 3$ in the latter case.
\end{enumerate}
In addition, the zero weight \mult in cases $(2),(3)$ is as in
Table~$\ref{tab:omega2}$.
\end{corol}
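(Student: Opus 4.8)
The statement will be read off from the two classifications already established. Given $\lambda\in\Omega_2(G)$, Proposition~\ref{te1} says that either $\lambda\in\Omega_1(G)$ --- which is conclusion~(1) --- or $\lambda$ is one of the weights displayed in Table~\ref{tab:omega2}, together with the multiplicity of its zero weight. So the whole argument amounts to partitioning the entries of Table~\ref{tab:omega2} between conclusions~(2) and~(3), and then invoking the fourth column of that table for the zero weight multiplicities.

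The point to notice is that the weights $\om_1+\om_n$ (type $A_n$), $\om_2$ (types $B_n$, $D_n$, $E_6$, $G_2$), $2\om_1$ (type $C_n$), $\om_1$ (types $E_7$ and $F_4$) and $\om_8$ (type $E_8$) are precisely the highest roots of the corresponding root systems, and hence the highest weights of the adjoint modules $\mathrm{Lie}(G)$. For each such pair $(G,\lambda)$ I would verify that $L_G(\lambda)$ is the \emph{unique} nontrivial composition factor of $\mathrm{Lie}(G)$, every other composition factor being the trivial module. For the classical types this is transparent from the standard descriptions of $\mathrm{Lie}(G)$ --- the skew endomorphisms of the natural module $V$ in the orthogonal case, the self-adjoint ones in the symplectic case, the trace-zero endomorphisms of $V$ in type $A$ --- since the only dominant weights strictly below the relevant highest root are $0$ and, in type $C_n$, the weight $\om_2$, and $L_G(\om_2)$ does not occur as a composition factor of $\mathrm{Lie}(\mathrm{Sp}(V))$. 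For the exceptional types I would instead quote the composition factors of $\mathrm{Lie}(G)$ recorded in \cite{LS2} (equivalently, the zero weight data in \cite{Lubeck}). This places all of the above pairs in conclusion~(2).

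What remains of Table~\ref{tab:omega2} after removing these is exactly the list $(A_3,2\om_2)$, $(B_n,2\om_1)$, $(C_n,\om_2)$ with $n>2$, $(C_2,2\om_2)$, $(C_4,\om_4)$, $(D_n,2\om_1)$ and $(F_4,\om_4)$ with $p\neq 3$; this is precisely conclusion~(3). Finally, for all pairs in~(2) and~(3) the multiplicity of the zero weight in $L_G(\lambda)$ is the number recorded in the fourth column of Table~\ref{tab:omega2}, which was justified in the proof of Proposition~\ref{te1}.

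The only step that requires real care is the verification, inside conclusion~(2), that $\mathrm{Lie}(G)$ does have a \emph{unique} nontrivial composition factor. In bad characteristic the adjoint module is not irreducible --- it may carry a nontrivial centre, or split off further trivial factors --- and one must check that, apart from $L_G(\lambda)$, every composition factor is trivial. I would settle this by appealing to the explicit composition-factor data for $\mathrm{Lie}(G)$ in \cite{LS2} and the weight-multiplicity tables in \cite{Lubeck}, rather than by a uniform characteristic-free argument.
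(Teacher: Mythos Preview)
Your approach is essentially the paper's: partition Table~\ref{tab:omega2} into the highest-root weights (case~(2)) and the remainder (case~(3)), and read the zero weight multiplicities from the table. The paper is terser, simply citing \cite[1.9]{Seitzclass} for the composition structure of ${\rm Lie}(G)$ as an $FG$-module; this is a more apt reference than \cite{LS2}, whose Table~10.1 records restrictions of ${\rm Lie}(H)$ to maximal subgroups of exceptional $H$, not the composition factors of ${\rm Lie}(G)$ for $G$ itself. One small slip: your subdominant-weight claim is wrong for $B_n$, where $\omega_1$ is subdominant to the highest root $\omega_2$. This does not damage the conclusion---a dimension count (or \cite[1.9]{Seitzclass}) shows ${\rm so}_{2n+1}=L_G(\omega_2)$ is irreducible for $p>2$---but it means your ``transparent'' verification for the classical types needs a bit more care than you suggest.
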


\begin{proof} The zero
weight \mult
 can be easily computed from the knowledge of the dimension of $L_G(\lam)$, provided in
\cite{Lubeck}. The statement about ${\rm Lie}(G)$ follows 
from the known  structure of the $FG$-module ${\rm Lie}(G)$; see for example 
\cite[1.9]{Seitzclass}.\end{proof} 

We record in the following corollary the cases where the 0 weight has 
multiplicity 2
in $L_G(\lam)$, for $\lambda\in\Omega_2(G)$.
This will be required for the resolution of Problem 1, in case $H = D_n$.

\begin{corol}\label{dt3} Let $\lambda\in\Omega_2(G)$ 
and suppose that the zero weight has multiplicity $2$ in
$L_G(\lam)$. Then the pair $(G,\lambda)$ is as in Table~$\ref{tab:mult2}$.

\begin{table}
$$\begin{array}{|l|c|c|c|}
\hline
G& \lambda&\mbox{conditions}&\dim L_G(\lambda)\\
\hline
A_2&\omega_1+\omega_2&p\ne 3&8\\\hline
A_3&2\omega_2&p>3&20\\
&\omega_1+\omega_3&p=2&14\\
\hline
C_2&2\omega_1&&10\\
&2\omega_2&p\ne 5&14\\
\hline
C_3&\omega_2&p\ne 3&14\\
\hline
C_4&\omega_2&p=2&26\\
&\omega_4&p\ne 2,3&42\\
\hline
D_4&\omega_2&p=2&26\\
\hline
F_4&\omega_1&p=2&26\\
&\omega_4&p\ne 3&26\\
\hline
G_2&\omega_2&p\ne 3&14\\
\hline
\end{array}$$
\caption{$\lambda\in\Omega_2(G)$, 0 weight in $L_G(\lambda)$ of multiplicity 2}\label{tab:mult2}
\end{table}

\end{corol}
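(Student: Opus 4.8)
The plan is to read the statement off Table~\ref{tab:omega2} together with Proposition~\ref{te1}, which has essentially done all the work. First I would observe that one may assume $\lambda\in\Omega_2(G)\setminus\Omega_1(G)$: if $\lambda\in\Omega_1(G)$, then by definition \emph{every} weight of $L_G(\lam)$ occurs with multiplicity $1$, in particular the zero weight, so the hypothesis of the corollary fails; and $\lambda=0$ is excluded since $L_G(0)$ is trivial.

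By Proposition~\ref{te1}, a weight $\lambda\in\Omega_2(G)\setminus\Omega_1(G)$ is one of those listed in Table~\ref{tab:omega2}, and the multiplicity of the zero weight in $L_G(\lam)$ is the quantity recorded in the fourth column of that table, expressed in terms of the rank $n$ of $G$ and the characteristic $p$; recall also from the proof of Proposition~\ref{te1} that $0$ is subdominant to every such $\lambda$, so the zero weight genuinely occurs. It then remains to run through the rows of Table~\ref{tab:omega2} one at a time and solve the equation ``fourth column $=2$'' for the admissible values of $n$ and $p$, checking compatibility with the conditions in the second and third columns each time.

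This last step is a short finite check. For the infinite families one finds, for instance, $n=2$ with $p\ne3$ or $n=3$ with $p\mid(n+1)$ from the row $(A_n,\om_1+\om_n)$; $n=3$ with $p\ne3$, and $n=4$ with $p=2$, from the rows $(C_n,\om_2)$ and $(D_n,\om_2,\,p=2)$; and no solutions from the rows $(B_n,\om_2)$, $(B_n,2\om_1)$, $(C_n,2\om_1)$, $(D_n,2\om_1)$, $(D_n,\om_2)$ with $p>2$, given the stated rank restrictions. For the exceptional groups I would simply inspect the fourth column: only $(F_4,\om_1)$ with $p=2$, $(F_4,\om_4)$ with $p\ne3$, and $(G_2,\om_2)$ with $p\ne3$ give the value $2$, while $E_6,E_7,E_8$ never do. Together with the small-rank entries $(A_3,2\om_2)$, $(A_3,\om_1+\om_3)$, $(C_2,2\om_1)$, $(C_2,2\om_2)$, $(C_4,\om_4)$ that one reads directly from Table~\ref{tab:omega2}, this produces exactly the list of pairs in Table~\ref{tab:mult2}; the dimension in the last column of Table~\ref{tab:mult2} is then read off from \cite{Lubeck} in each case. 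The only thing needing care is keeping track of the exceptional conditions and small-rank coincidences attached to the rows of Table~\ref{tab:omega2}, so as neither to introduce a spurious pair nor to miss a genuine one; there is no real obstacle here.
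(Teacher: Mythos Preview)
Your proposal is correct and is exactly the approach the paper intends: the corollary is stated without proof because it is immediate from Proposition~\ref{te1} and the fourth column of Table~\ref{tab:omega2}, just as you describe. Your case check of the equation ``fourth column $=2$'' is accurate, including the small-rank and exceptional-characteristic conditions.
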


We can now complete the proof of Theorem~\ref{mt11}.

\begin{theo}\label{w1t} Let $\lambda\in X(T)$ be a non-zero dominant weight. Then
at most one weight space of $L_G(\lam)$ is of dimension greater than one
\ii one of the \f holds:
\begin{enumerate}[]
\item{\rm (1)} the module $L_G(\lambda)$ is as described in Proposition~$\ref{sz7}$;
\item{\rm (2)} $\lambda=p^k\mu$ for some $k\in{\mathbb N}$, $k=0$ if ${\rm char}(F)=0$, and for some weight 
$\mu\in\Omega_2(G)\setminus \Omega_1(G)$, as given in Table~$\ref{tab:omega2}$.
\end{enumerate}
\end{theo}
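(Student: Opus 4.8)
The plan is to prove this by reducing the general (not-necessarily-$p$-restricted) case to the already-established $p$-restricted results, namely Proposition~\ref{sz7} and Proposition~\ref{te1}. First I would observe that the statement is really about the Steinberg tensor product decomposition. Write $\lambda = \sum_{i=0}^k p^i\lambda_i$ with each $\lambda_i$ a $p$-restricted dominant weight (in characteristic $0$ the decomposition is trivial and only part (1) applies, so I may assume $p>0$). By Steinberg's tensor product theorem, $L_G(\lambda) \cong L_G(\lambda_0)\otimes L_G(\lambda_1)^{[p]}\otimes\cdots\otimes L_G(\lambda_k)^{[p^k]}$, so the set of $T$-weights of $L_G(\lambda)$, with multiplicities, is obtained by forming all sums $\sum_i p^i\nu_i$ where $\nu_i$ runs over the weights of $L_G(\lambda_i)$, and the multiplicity of a given weight $\mu$ of $L_G(\lambda)$ is $\sum \prod_i (\dim L_G(\lambda_i)_{\nu_i})$, the sum taken over all decompositions $\mu = \sum_i p^i\nu_i$ with $\nu_i$ a weight of $L_G(\lambda_i)$.

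The key combinatorial point I would establish is a \emph{uniqueness of $p$-adic decomposition} lemma for weights: because each $L_G(\lambda_i)$ has all its weights lying in a bounded region (between $\lambda_i$ and $w_0\lambda_i$, i.e.\ in the convex hull of the Weyl orbit of $\lambda_i$, which for $p$-restricted $\lambda_i$ is ``small'' relative to $p$ in the relevant directions), the expression $\mu=\sum_i p^i\nu_i$ is in fact unique when it exists — or more precisely, the decomposition is unique up to the usual caveats that already appear in Proposition~\ref{sz7}(2)(b). Granting this, the multiplicity of $\mu$ in $L_G(\lambda)$ equals $\prod_i \dim L_G(\lambda_i)_{\nu_i}$ for the unique choice of $\nu_i$. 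Hence $L_G(\lambda)$ has at most one weight space of dimension $>1$ if and only if the tensor factors are ``compatible'' in the following sense: at most one index $i$ can have $\lambda_i\notin\Omega_1(G)$, and if such an $i$ exists then all other factors must contribute only $1$-dimensional weight spaces to the relevant products, which forces $\lambda_j = 0$ for all $j\ne i$ (since $L_G(\lambda_j)$ nontrivial with $\lambda_j\in\Omega_1(G)\setminus\{0\}$ still has more than one weight, and pairing its nonzero weight with the ``bad'' weight space of the $i$-th factor would create a second $>1$-dimensional weight space in the tensor product). Combined with the case-by-case exceptions of Proposition~\ref{sz7}(2)(b) — which handle exactly the degenerate situations where the uniqueness of $p$-adic decomposition fails and extra collapsing occurs — this yields: either \emph{all} weights of $L_G(\lambda)$ have multiplicity $1$ (case (1), via Proposition~\ref{sz7}), or $\lambda = p^k\mu$ with $\mu$ $p$-restricted and all nonzero weights of $L_G(\mu)$ of multiplicity $1$ but the zero weight of higher multiplicity, i.e.\ $\mu\in\Omega_2(G)\setminus\Omega_1(G)$ as in Table~\ref{tab:omega2} (case (2)).

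For the converse, I would check that both families listed actually have the stated property: case (1) is immediate from the definition, and for case (2), since $L_G(p^k\mu)\cong L_G(\mu)^{[p^k]}$ simply relabels weights $\nu\mapsto p^k\nu$, the weight-multiplicity pattern of $L_G(p^k\mu)$ is identical to that of $L_G(\mu)$, which by Proposition~\ref{te1} has exactly one weight space (the zero weight space) of dimension $>1$. The main obstacle I anticipate is making the ``uniqueness of $p$-adic decomposition of weights'' argument fully rigorous while correctly accounting for the exceptional pairs in Proposition~\ref{sz7}(2)(b): one must verify that in all cases \emph{not} covered by those exceptions, no weight $\mu$ admits two distinct decompositions $\sum p^i\nu_i$, and that in the exceptional cases the only failures of multiplicity one already involve a factor outside $\Omega_1(G)$ or are otherwise absorbed into the statement. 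This is essentially the content of the proof of \cite[Prop.~2]{SZ1}, so I would lean on that analysis rather than redo it, citing it for the delicate estimates and reserving the new work for assembling the tensor-product bookkeeping above.
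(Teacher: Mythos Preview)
Your overall architecture is sound, and your reduction via Steinberg's tensor product theorem to the $p$-restricted case mirrors the paper. However, you are missing the single observation that makes the paper's proof short and that lets one bypass the ``uniqueness of $p$-adic decomposition'' you identify as the main obstacle: if a non-zero weight of $L_G(\lambda)$ has multiplicity greater than $1$, then so do all of its Weyl-conjugates, and hence the unique weight of multiplicity exceeding $1$ must be the zero weight. With this in hand the proof is essentially immediate. Writing $\lambda=\sum p^{k_i}\lambda_i$ with the $\lambda_i$ non-zero $p$-restricted, if $l=1$ we are in case~(2). If $l>1$ and some factor $L_G(\lambda_i)$ had a weight of multiplicity $>1$, then pairing it with different weights of a nontrivial $L_G(\lambda_j)$ produces at least two weights of multiplicity $>1$ in the tensor product, at most one of which can be zero --- contradicting the Weyl-group observation. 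Hence every $\lambda_i\in\Omega_1(G)$, and since we are not in case~(1), Proposition~\ref{sz7}(2)(b) forces one of the three exceptional consecutive pairs $(C_n,p=2,\omega_n,\omega_1)$, $(G_2,p=2,\omega_1,\omega_1)$, $(G_2,p=3,\omega_2,\omega_1)$. The paper then simply computes, for each of these three tensor products, the multiplicity of the zero weight and finds it is at most $1$; since the bad weight must be zero, this is a contradiction, and the case $l>1$ does not occur.

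By contrast, your plan to establish uniqueness of the decomposition $\mu=\sum p^i\nu_i$ is both harder and unnecessary: it is not literally true in the exceptional cases (that is exactly why they are exceptional in Proposition~\ref{sz7}), and your sentence that in those cases ``the only failures of multiplicity one already involve a factor outside $\Omega_1(G)$ or are otherwise absorbed into the statement'' is not correct as stated --- both factors in each exceptional pair lie in $\Omega_1(G)$. You would still need to argue separately that those tensor products have at least two weights of multiplicity $>1$, which is precisely what the zero-weight computation plus the Weyl-group observation delivers without any $p$-adic bookkeeping. So: insert the Weyl-group step at the outset, drop the uniqueness lemma, and do the short explicit zero-weight check in the three exceptional cases; this is the paper's route and it is considerably cleaner.
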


\begin{proof} By Propositions~\ref{sz7} and \ref{te1}, the modules $L_G(\lambda)$ described in (1) and (2), all
non-zero weights are of multiplicity at most 1. So now take $\lambda\in X(T)$ a non-zero dominant weight
such that at most one weight space of $L_G(\lambda)$ has dimension greater than 1, and suppose $\lambda$ is not as in (1).
In particular, $L_G(\lambda)$ has a weight of multiplicity greater than 1.
Note that if a non-zero weight in $L_G(\lambda)$ occurs with multiplicity greater than 1, then
so do all of its conjugates under the Weyl group. Hence, the weight
occurring with multiplicity greater than one in $L_G(\lambda)$  must be the zero weight.

Now, let $\lambda = \sum_{i=1}^l p^{k_i}\lambda_i$, where $\lam_i$ is a non-zero dominant $p$-restricted weight for all $i$; 
so we have
$L_G(\lambda)\cong L_G(p^{k_1}\lam_1)\otimes\cdots \otimes L_G(p^{k_l}\lam_l)$.
If $l=1$ then (2) holds. Let $l>1$. Then for all $1\leq i\leq l$, 
the weights of $L_G(p^{k_i}\lam_i)$ must have multiplicity 1 (else a non-zero weight has
 multiplicity greater than 1 in $L_G(\lambda)$). Then by Proposition~\ref{sz7},
we see that there exists a pair 
$(\lambda_i,\lambda_j)$ with $k_j=k_i+1$, as in Proposition~\ref{sz7}(2)(b). So
we investigate the multiplicity of the zero weight in the 2-fold tensor products associated to 
these pairs of weights.  

Let $L_G(\lambda)=L_G(\lam_1)\otimes L_G(p\lam_2)$, where
$(G,p,\lam_1,\lam_2)$ are the following: (a)\ $(C_n,2,\om_n,\om_1)$, (b)\ $(G_2,2,\om_1,\om_1)$,
(c)\ $(G_2,3,\om_2,\om_1)$.

In case (a), let $\ep_i$ be the weights defined as in \cite[Planche III, IX]{Bourb4-6}.
The weights of $L_G(\om_n)$ are $\pm\ep_1\pm \cdots \pm \ep_n$
and the weights of $L_G(2\om_1)$ are $\pm 2\ep_1\ld \pm 2\ep_n$;
in particular, there are no common weights. It follows that the 0 weight does
not occur in $L_G(\lam_1)\otimes L_G(2\lam_2)$. 

In case (b), the weights of $L_G(\om_1)$ are $\{\pm (\ep_1-   \ep_2), \pm
(\ep_1- \ep_3), \pm (\ep_2- \ep_3)\}$. So $L_G(\om_1)$ and
$L_G(2\om_1)$ again have no common weight, and hence the 0 weight does not occur in $L_G(\om_1)\otimes
L_G(2\om_1)$. 

Finally for case (c), the weights of $L_G(3\om_1)$ are $\{0, \{\pm 3(\ep_1-   \ep_2),
\pm 3(\ep_1- \ep_3), \pm 3(\ep_2- \ep_3)\}$, and the weights of
$L_G(\om_2)$ are $0, \pm (2\ep_1-\ep_2- \ep_3), \pm (2\ep_3-\ep_1-
\ep_2), \pm (2\ep_2-\ep_1- \ep_3)$. Then the \mult of the weight 0
in
 $L_G(\lam_1)\otimes L_G(3
\lam_2) $ is 1.

It follows from Proposition \ref{sz7} that the weights
of $L_G(\lam_1)\otimes L_G(p\lam_2)$ whose \mult is greater than 1
are non-zero. Hence there are no examples of $\lambda= \sum_{i=1}^l p^{k_i}\lambda_i$ with $l>1$ and all $\lambda_i$ 
different from 0 such that the zero weight has multiplicity greater than 1 and all non-zero weights have multiplicity 1
in the module $L_G(\lambda)$. \end{proof}


\section{Semisimple regular elements in classical groups}\label{sec:sscl}
Throughout this section, $H$ is a simply connected simple algebraic group of classical type, with 
natural module $V$. Let $\tilde H$ be the image of $H$ in ${\rm GL}(V)$. Note that 
$x\in H$ is regular if and only if the image of $x$ in $\tilde H$ is regular in $\tilde H$. Moreover, we will assume $p>2$ when $H$ is of type $B_m$, as in the case $p=2$,
it will suffice to treat the group of type $C_m$.

\begin{lemma}\label{2h2} Let $H$ be of type $B_m$, $C_m$ or $D_m$. Let $t\in {\tilde H}$ be a semisimple
regular element.
Then $t$ is a regular element in ${\rm GL}(V)$, except for the \f
cases:\begin{enumerate}[]
\item{\rm (1)} $H$ is of type $B_m$  and $-1$ is an \ei of $t$ on $V$
with \mult $2$;
\item {\rm (2)} $H$ is of type $D_m$  and either $1$ or $-1$ or
both are \eis of $t$ on $V$ with
\mult $2$.
\end{enumerate}
\end{lemma}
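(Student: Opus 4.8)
The statement relates two notions of regularity: $t$ regular in $\tilde H$ (a classical group of type $B_m$, $C_m$, $D_m$ acting on its natural module $V$ with a nondegenerate bilinear/quadratic form) versus $t$ regular in $\GL(V)$. For a semisimple element, regularity in $\GL(V)$ means all eigenvalues on $V$ are distinct (all eigenspaces are $1$-dimensional), while regularity in $\tilde H$ means $C_{\tilde H}(t)^\circ$ is a maximal torus, equivalently (by Lemma \ref{ca1} and Proposition \ref{lie}) $\dim C_{\Lie(\tilde H)}(t)=\rank(\tilde H)=m$. So the whole lemma is really a computation of the dimension of the fixed space of $t$ on $\Lie(\tilde H)$ versus on $\Lie(\GL(V))=\End(V)$, phrased in terms of eigenvalue multiplicities.

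The plan is as follows. First I would diagonalize $t$ on $V$ and record the eigenvalue list, using that the form forces the multiset of eigenvalues to be closed under $\mu\mapsto\mu^{-1}$: write $V = V_1 \perp \bigoplus_j (V_{\mu_j}\oplus V_{\mu_j^{-1}})$ where $V_1$ is the sum of the eigenspaces for the self-inverse eigenvalues $\mu=\pm 1$ (only $\mu=1$ in type $C_m$ with $p\neq 2$, since $-1$ contributes a symplectic summand but is still self-inverse; both $\pm1$ in types $B_m,D_m$), and each $V_{\mu_j}$ with $\mu_j\neq\mu_j^{-1}$ is totally singular and paired with $V_{\mu_j^{-1}}$. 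Then I would compute $\dim C_{\Lie(\tilde H)}(t)$ directly: the fixed space of $t$ acting by conjugation on the classical Lie algebra decomposes according to eigenvalue pairs, and one gets a contribution $\dim C_{\mathfrak{cl}(V_1)}(t)$ from the self-inverse part (a smaller classical Lie algebra, on which $t$ acts as a semisimple element whose own centralizer dimension is computed by induction or directly in terms of the $\pm1$-multiplicities) plus a contribution $\sum_j \dim \Hom(V_{\mu_j},V_{\mu_j})$ from each hyperbolic pair (the form identifies the fixed space on the "off-diagonal" blocks with $\End(V_{\mu_j})$). Setting this sum equal to $m$ and comparing with the condition that all eigenvalue multiplicities are $1$ (which makes $t$ regular in $\GL(V)$) will isolate exactly the listed exceptions.

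Concretely, the key arithmetic step: if $t$ has eigenvalue multiplicities $d_1$ (for $+1$), $d_{-1}$ (for $-1$), and $e_1,\dots,e_r$ for the non-self-inverse pairs, then $\dim C_{\Lie(\tilde H)}(t) = c(d_1,d_{-1}) + \sum_j e_j^2$ where $c(a,b)$ is the centralizer dimension of a generic semisimple element with those fixed-space dimensions inside the appropriate small classical group — for type $C_m$, $c(a,b)$ is $\binom{a+1}{2}+\binom{b}{2}$ worth of terms with $a$ the multiplicity of $+1$ (symplectic on the $+1$-space) and the $-1$ space also carrying a symplectic form, etc.; for types $B_m/D_m$, $c(a,b) = \binom{a}{2}+\binom{b}{2}$ type contributions (orthogonal on both $\pm1$-spaces). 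The condition $\dim C_{\Lie(\tilde H)}(t)=m$ then forces $e_j=1$ for all $j$ and forces $a,b\in\{0,1\}$ in type $C_m$, and $a,b\in\{1,2\}$-with-at-most-one-equal-to-$2$... — more precisely in type $B_m$ it forces one of $d_1,d_{-1}$ to be $1$ (the odd-dimensional orthogonal space) and the other to be $1$ or $2$, and in type $D_m$ it allows each of $d_1,d_{-1}$ to be $0$, $1$, or $2$. Translating "$d_{\pm1}\le 2$, at least one $=2$, everything else $=1$" into the eigenvalue-multiplicity language is exactly the list of exceptions (1) and (2). I would be careful to recall that in type $B_m$, $\det t = 1$ forces the number of eigenvalues equal to $-1$ to have the right parity, which is why $-1$ (not $+1$) is the one that can acquire multiplicity $2$ while $+1$ stays at multiplicity $1$, and that in type $C_m$ there simply is no exception since both $\pm1$-eigenspaces carry symplectic forms forcing even multiplicity — a multiplicity-$2$ eigenvalue there still gives a regular element of $\Sp$, but also a non-regular element of $\GL$, so $C_m$ would seem to give exceptions; I must re-examine this and conclude that in type $C_m$ a semisimple regular element of $\tilde H$ actually must have all $\GL$-eigenvalue multiplicities $1$, which is consistent with the lemma listing no $C_m$ case. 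Resolving this $C_m$ subtlety cleanly is the part I expect to need the most care.

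The main obstacle, then, is not the Lie-algebra bookkeeping (which is routine linear algebra once the eigenspace/form decomposition is set up), but correctly handling the interplay between the determinant/parity constraints and the self-inverse eigenvalues $\pm 1$ in each type — in particular pinning down why exactly one of $\pm1$ may double up in type $B_m$, why both may in type $D_m$, and why neither does in type $C_m$. I would organize the final write-up as a case division on the type, in each case computing $\dim C_{\Lie(\tilde H)}(t)$ from the eigenvalue data, invoking Proposition \ref{lie} to equate this with $m$, and reading off the constraint.
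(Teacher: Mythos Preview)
Your approach via Lie-algebra centralizer dimensions is valid in principle and would give the result, but it is considerably more elaborate than the paper's argument. The paper simply diagonalizes $t$ in a hyperbolic basis as $\diag(t_1,\ldots,t_m,t_m^{-1},\ldots,t_1^{-1})$ (with an extra middle entry $1$ in type $B_m$) and uses the characterization of a regular semisimple element as one lying in the kernel of no root: one then just reads off the root values $t_it_j^{\pm1}$ (together with $t_j^2$ for $C_m$ and $t_j$ for $B_m$) and checks which of them can equal $1$. This immediately explains why $C_m$ has no exception (the long root $2\ep_j$ forces $t_j\ne\pm1$), why in $B_m$ only $-1$ can double (the short root $\ep_j$ forces $t_j\ne1$ but allows $t_j=-1$), and why $D_m$ allows both $\pm1$ to double (no root constrains $t_j$ or $t_j^2$ individually). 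Your eigenspace-decomposition and block-centralizer computation recovers the same information, at the cost of bookkeeping that the root-value check avoids entirely.

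Two points in your write-up need correcting. First, in type $C_m$ your claim that ``a multiplicity-$2$ eigenvalue [$\pm1$] still gives a regular element of $\Sp$'' is false: a $2$-dimensional $(\pm1)$-eigenspace contributes an $\mathfrak{sp}_2\cong\mathfrak{sl}_2$ of dimension $3$ (not $1$) to $C_{{\rm Lie}(\tilde H)}(t)$, so such an element is not regular in $\tilde H$---this is exactly why $C_m$ has no exception, and your own centralizer formula would have shown it had you applied it. Second, your $B_m$ explanation via $\det t=1$ is incomplete: the determinant does force $d_{-1}$ to be even, but the reason $d_1=1$ (rather than $3,5,\ldots$) is that the orthogonal centralizer contribution $\binom{d_1}{2}$ exceeds the available rank $(d_1-1)/2$ unless $d_1=1$; in the paper's language, this is precisely the short-root condition $t_j\ne1$, which has no analogue in $D_m$.
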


\begin{proof} Let $d=\dim V$ and let $b_1\ld b_d$ be a basis in $V$ with respect to which the Gram matrix of the bilinear form
is anti-diagonal, with all non-zero entries in the set $\{1,-1\}$. We may assume the matrix of $t$ with respect to this basis is diagonal and of the form
$t=\diag (t_1\ld t_m,x,t_m\up\ld t_1\up )$, where $x$ is
absent if $d$ is even and equal to 1 otherwise. Since
$t$ is regular in $H$, $t_1\ld t_m$ are distinct. Moreover,
if $H$ is of type $C_m$ then all $t_1\ld t_m,t_m\up\ld t_1\up $  are distinct. Indeed,
 the roots of $C_m$ take values $\{t_it_j^{-1}, t_it_j, t_j^2\ |\ 1\leq i\ne j\leq m\}$ and $t$ lies in the kernel of no root. 
If $H$
is of type $B_m$, $m>2$, then the roots of $H$ take values $\{t_it_j^{-1}, t_it_j, t_j\ |\ 1\leq i\ne j\leq m\}$ on $t$. So no $t_i$  is equal to $ 1$ and therefore
the eigenvalues $t_1\ld t_m,x,t_m\up\ld t_1\up$ are distinct except for the case
(1). Finally, for $H$ of type $D_m$, we argue as above, using the fact that the roots of $H$ take values $\{ t_it_j^{-1}, t_it_j\ |\ 1\leq i\ne j\leq m\}$ on $t$, which leads to  (2). \end{proof}

\begin{lemma}\label{33}
 Let $H$ be of type $B_m$, $C_m$ or $D_m$, and let
$T'$ be a regular torus in $\tilde H$.
Then $T'$ is a regular torus in ${\rm GL}(V)$, except for the
case where $H$ is of type $D_m$  and the fixed point
subspace of $T'$ on $V$ is of dimension  $2$.\end{lemma}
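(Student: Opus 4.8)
The plan is to deduce this statement about tori from the element-level Lemma~\ref{2h2} by means of Lemma~\ref{ca1}. First I would recall that a torus $T'$ in a connected algebraic group is regular if and only if it contains a regular element (Lemma~\ref{ca1}); applying this simultaneously in $\tilde H$ and in ${\rm GL}(V)$ reduces the claim to a statement about the eigenvalue patterns that can occur. Concretely, since $T'$ is a regular torus in $\tilde H$, it contains an element $t$ which is regular in $\tilde H$; by Lemma~\ref{2h2}, such a $t$ fails to be regular in ${\rm GL}(V)$ only in the listed exceptional cases, i.e.\ only when $H$ is of type $B_m$ with $-1$ an eigenvalue of $t$ of multiplicity $2$, or $H$ is of type $D_m$ with $1$ and/or $-1$ an eigenvalue of $t$ of multiplicity $2$. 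So outside these cases we are immediately done: $t$ is regular in ${\rm GL}(V)$, hence $T'$ is a regular torus in ${\rm GL}(V)$.

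The substance of the argument is therefore to show: (a) in type $B_m$, the exceptional situation never actually produces a non-regular torus, and (b) in type $D_m$, the torus $T'$ is non-regular in ${\rm GL}(V)$ precisely when its fixed-point space on $V$ has dimension $2$. For part (a): every element of $T'$ has eigenvalues of the shape $(t_1,\dots,t_m,1,t_m^{-1},\dots,t_1^{-1})$ with the $t_i$ depending regularly (as characters) on the point of $T'$. The eigenvalue $1$ occurs with multiplicity $\geq 2$ on a given element only if some $t_i = 1$ or some pair $t_i = t_j^{-1}$; but the characters $t_i$ and $t_it_j^{-1}$, $t_it_j$ are (by regularity of $T'$ in $\tilde H$, via the root values computed in the proof of Lemma~\ref{2h2}) nontrivial on $T'$. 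Hence the locus in $T'$ where the multiplicity-$2$ phenomenon at eigenvalue $-1$ happens is a proper closed subset, so a generic element of $T'$ avoids it and is regular in ${\rm GL}(V)$; thus $T'$ is regular in ${\rm GL}(V)$, and type $B_m$ genuinely gives no exception at the level of tori.

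For part (b), in type $D_m$ the eigenvalues of a point of $T'$ have the shape $(t_1,\dots,t_m,t_m^{-1},\dots,t_1^{-1})$, and the relevant root characters are $t_it_j^{-1}$ and $t_it_j$ for $i\neq j$; these are nontrivial on $T'$, but the characters $t_i^{\pm 2}$ and $t_i$ need not be. I would argue that a repeated eigenvalue (other than the forced coincidences governed by the roots) can only come from some $t_i^2 = 1$, and that such an $i$ forces both the eigenvalue $t_i$ and $t_i^{-1}=t_i$ to coincide, contributing a $2$-dimensional eigenspace which is exactly the fixed-point space of $T'$ restricted to the corresponding coordinate pair. More precisely: the fixed-point space of $T'$ on $V$ is the sum of the coordinate lines on which all of $T'$ acts trivially, i.e.\ the lines indexed by those $i$ with $t_i \equiv 1$ on $T'$ together with the corresponding $t_i^{-1}$ lines; regularity of $T'$ in $\tilde H$ forces at most one such index $i$ (else $t_it_j^{-1}\equiv 1$ for two of them, contradicting that this root is nontrivial), so $\dim C_V(T') \in \{0,2\}$, and it equals $2$ exactly when there is such an index. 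In that case a generic element of $T'$ has $t_i$ a root of unity forced to satisfy $t_i = t_i^{-1}$, i.e.\ $t_i = \pm 1$, giving a $2$-dimensional eigenspace, so no element of $T'$ is regular in ${\rm GL}(V)$ and $T'$ is not a regular torus there; conversely if $\dim C_V(T')=0$ a generic element has all $2m$ eigenvalues distinct and is regular in ${\rm GL}(V)$. I expect the main obstacle to be the bookkeeping in part (b): carefully relating "a character $t_i$ is trivial on $T'$" to the dimension of the fixed-point space, and checking that the only way to get a repeated eigenvalue on a generic element, beyond the unavoidable $t_i \leftrightarrow t_i^{-1}$ pairing, is through such a trivial character — this uses the nontriviality of all the genuine $D_m$ root characters on $T'$, which is exactly what regularity of $T'$ in $\tilde H$ provides.
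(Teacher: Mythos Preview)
Your proof is correct. The overall architecture---reduce via Lemma~\ref{ca1} to the element-level Lemma~\ref{2h2}, then dispose of the exceptional eigenvalue configurations---matches the paper for type $C_m$, but diverges for $B_m$ and $D_m$.

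For $B_m$, you use a genericity argument: the locus in $T'$ where some $t_i=-1$ is a proper closed subset (each $t_i$ being a short root, hence nontrivial on $T'$), so a generic element of $T'$ avoids it and is regular in ${\rm GL}(V)$. This is fine, though your digression about eigenvalue~$1$ is irrelevant (the exception in Lemma~\ref{2h2} concerns only $-1$). The paper instead observes directly that for $B_m$ every difference of $T$-weights of $V$ is a multiple of a root; since $T'$ lies in no root kernel, the $T'$-weights on $V$ are already pairwise distinct as characters, and regularity in ${\rm GL}(V)$ follows without picking any element at all. This is shorter and more conceptual.

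For $D_m$, your argument and the paper's are closer in substance: both reduce to the question of which weight characters $\pm\epsilon_i$ coincide on $T'$, and both use that $\epsilon_i\pm\epsilon_j$ (the roots) are nontrivial on $T'$ to force at most one index $i$ with $\epsilon_i|_{T'}=0$. The paper phrases this as a basis computation; you phrase it via characters and generic elements. Your sentence ``a generic element of $T'$ has $t_i$ a root of unity forced to satisfy $t_i=t_i^{-1}$, i.e.\ $t_i=\pm1$'' is muddled---if $t_i\equiv 1$ on $T'$ then every element has $t_i=1$, full stop---but the conclusion (a $2$-dimensional eigenspace at $1$, hence no ${\rm GL}(V)$-regular element in $T'$) is correct. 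What your uniform genericity viewpoint buys is that types $B_m$, $C_m$, $D_m$ are all handled by the same mechanism; what the paper's weight-character viewpoint buys is that no density argument is needed and the $B_m$ case becomes a one-liner.
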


\begin{proof} Appying Lemma~\ref{ca1}, we choose $t\in T'$ such that $C_H(t)^\circ = C_H(T')$. 
By Lemma~\ref{2h2}, $t$ is regular in ${\rm GL}(V)$ if $H$ is of type $C_m$,
and so the result holds. If $\dim V$ is odd, that is, if $H$ is of type $B_m$,
let $T$ be a maximal torus of $H$.
Then for all $T$-weights $\mu$, $\nu$ of $V$, the difference $\mu-\nu$ is a multiple of a root of $H$ (with respect to $T$).
Since $T'$ is a regular torus in $\tilde H$, $T'\not\subset{\ker}(\beta)$ for any root $\beta$ of $H$. Hence 
$T'$ has distinct weights
on $V$, and so is a regular torus in ${\rm GL}(V)$.

Finally, consider the case $H = D_m$. Let $b_1\ld b_{2m}$ be a basis for $V$ as in the previous proof.
We may assume that $T'$ consists of diagonal matrices with respect to this basis.
Now, $T'$ regular implies that the weight of $T'$ afforded by $\lan b_j\ran $ is distinct from the weight afforded by 
$\lan b_{2m-k+1}\ran$ for all $k\not\in\{ j, 2m-j+1\}$. So the weights of $T'$ on $V$ are distinct unless 
there exists $j$, $1\leq j\leq m$, such that the weight afforded by $\lan b_j\ran$ is equal to that afforded by $\lan b_{2m-j+1}\ran$. So assume these two weights coincide. Since the weights afforded by these two vectors differ by a sign, if they are equal, they must be 0. 
Hence there is a unique such $j$ and the result holds.\end{proof}

\begin{propo}\label{55y} Let $H$ be of type $D_m$ with $m\geq 4$.
 Let 
$T$ be a torus in $\tilde H$.
 Suppose that there exists at most one $T$-weight of $V$ of multiplicity greater than $1$, and if such a weight exists,
its multiplicity is $2$.
Then
$C_{\tilde H}(T)$ is a maximal torus in $\tilde H$, that is, $T$ is a regular torus 
in $\tilde H$.
\end{propo}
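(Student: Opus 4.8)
The plan is to verify that $T$ centralizes no nontrivial unipotent element of $\tilde H$, which by Definition~\ref{reg} (and the remark following Lemma~\ref{ca1}, applied to a torus $S$) is equivalent to $T$ being a regular torus in $\tilde H$. Since $\tilde H = \SO(V)$ acts on $V$ preserving a non-degenerate bilinear form, I work with a basis $b_1,\dots,b_{2m}$ for which the Gram matrix is anti-diagonal with entries in $\{1,-1\}$, as in the proofs of Lemmas~\ref{2h2} and \ref{33}, and I may assume $T$ consists of diagonal matrices with respect to this basis. The hypothesis is that at most one $T$-weight on $V$ has multiplicity $>1$, and that exceptional weight (if present) has multiplicity exactly $2$; under the form, $\lan b_j\ran$ and $\lan b_{2m-j+1}\ran$ carry weights that are negatives of each other, so a weight of multiplicity $2$ must be the zero weight (a pair $\lan b_j\ran,\lan b_{2m-j+1}\ran$ with equal weights forces that weight to vanish), exactly as in Lemma~\ref{33}.

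The first case is where all $T$-weights on $V$ are distinct (multiplicity $1$): then $T$ is a regular torus in $\GL(V)$, hence $C_{\GL(V)}(T)$ is the full diagonal torus, so $C_{\tilde H}(T) = C_{\GL(V)}(T)\cap \tilde H$ is the diagonal torus of $\tilde H$, which is a maximal torus. In the remaining case, there is a unique pair of basis vectors, say $b_j$ and $b_{2m-j+1}$, both of weight $0$, and all other weight spaces are one-dimensional. Then $C_{\GL(V)}(T)$ is block-diagonal: a one-dimensional torus on each of the other $2m-2$ coordinates, and a full $\GL_2$ on the span $U := \lan b_j, b_{2m-j+1}\ran$. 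Intersecting with $\tilde H$: on the $2m-2$ one-dimensional pieces the form-preserving condition cuts the $(\GL_1)^{2m-2}$ down to an $(m-1)$-dimensional torus (pairing $\lan b_k\ran$ with $\lan b_{2m-k+1}\ran$), and on $U$ the form restricts to a non-degenerate form, so we get $C_{\GL(V)}(T)\cap \tilde H$ containing a factor isomorphic to $\SO_2 \cong \GL_1$ on $U$ (here I use that on a $2$-dimensional non-degenerate quadratic space the orthogonal group has identity component a $1$-dimensional torus). Adding up: $(m-1) + 1 = m = {\rm rank}(\tilde H)$, and $C_{\tilde H}(T)^\circ$ is a torus of dimension $m$, hence a maximal torus.

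The one point requiring a little care — and the step I expect to be the main obstacle — is ruling out unipotent elements, i.e., confirming that $C_{\tilde H}(T)^\circ$ really is a torus and not merely of the right dimension; equivalently, that the $\SO$ (or, in characteristic $2$, $\Sp$) of the $2$-dimensional space $U$ has no unipotent elements in its identity component. This is the statement that $\SO_2$ is a torus, which holds over an algebraically closed field in all characteristics, but in characteristic $2$ one must be mindful that $\tilde H$ is then treated as a group of type $C_m$ rather than $B_m$; however the hypothesis $m\geq 4$ and type $D_m$ keeps us with a non-degenerate quadratic form and the argument goes through. Alternatively, and perhaps more cleanly, I would invoke Lemma~\ref{33} directly: it gives that a regular torus $T'$ of $\tilde H$ is regular in $\GL(V)$ unless its fixed space is $2$-dimensional, and conversely here I am given the weight-multiplicity data and must produce regularity of $T$; so the cleanest route is to show $C_{\tilde H}(T)$ has dimension $m$ by the block computation above and then note $\dim C_{\tilde H}(T) \geq {\rm rank}(\tilde H) = m$ always, with equality forcing $C_{\tilde H}(T)^\circ$ to be a maximal torus since it contains one (namely the diagonal maximal torus of $\tilde H$, which centralizes the diagonal $T$). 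That containment-plus-dimension argument avoids ever having to talk about unipotent elements explicitly, and I would organize the final write-up that way.
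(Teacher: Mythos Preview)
Your approach is correct but takes a different route from the paper. You diagonalise $T$ in a hyperbolic basis, argue that the multiplicity-$2$ weight space $M$ must be a dual pair $\langle b_j, b_{2m-j+1}\rangle$ (hence a non-degenerate hyperbolic plane carrying the zero weight), and then compute $C_{\tilde H}(T)$ via the block decomposition $M\oplus M^\perp$, finishing with the containment-plus-dimension argument (which neatly sidesteps any worry about unipotents or about $O_2$ versus $\SO_2$ in characteristic $2$). The paper instead works coordinate-free: letting $M$ be the weight space of multiplicity $2$, it runs a trichotomy on the restriction of the form to $M$ (totally singular, neither, non-degenerate), derives contradictions from the hypothesis in the first two cases, and in the third uses that $\SO_2(F)$ has no unipotent elements to conclude $C_{\tilde H}(T)$ is a torus. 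Your route is more direct once the basis is fixed, since it lands immediately in the non-degenerate case; the paper's route avoids having to identify $M$ explicitly.

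One point to tighten: your justification that $M=\langle b_j, b_{2m-j+1}\rangle$ is incomplete, and the appeal to Lemma~\ref{33} is misplaced (that lemma \emph{assumes} regularity of the torus, which is precisely what you are proving here). You must also exclude the possibility $M=\langle b_i, b_k\rangle$ with $k\ne 2m-i+1$. This is easy: in that case $b_{2m-i+1}$ and $b_{2m-k+1}$ both carry weight $-\mu$ and the four indices $i,k,2m-i+1,2m-k+1$ are distinct, so either $\mu\ne 0$ and both $\mu$ and $-\mu$ have multiplicity at least $2$, or $\mu=0$ has multiplicity at least $4$---either way contradicting the hypothesis. With this sentence added, your argument is complete.
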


\begin{proof} If all  weights of $T$ on $V$ have \mult 1, then $T$ is a regular torus in ${\rm GL}(V)$,
and hence in $\tilde H$. Suppose that there is a weight $\mu$ of $T$ on $V$ of \mult 2, and let $M$ be the corresponding weight space. Denote by $R$  the set  of singular vectors in 
$ M$ together with the zero vector.

If $R=M$ then  $M$ is totally singular, $M\subset M^\perp$, and $V|_{T}=M^{\perp}\oplus V_2$. It is well-known that the $FT$-module
$V_2$ is dual to $M$, and hence is contained in a $T$-weight space.
This is a contradiction as $\dim V_2=2$ and by hypothesis $M$ is the unique weight space of dimension greater than 1.

Suppose that $M$ is neither  totally singular nor non-degenerate with respect to the bilinear form on $V$. Then 
$R$ is a 1-dimensional $T$-invariant subspace of $M$ and $M\subset R^\perp$. Let $M|_T=R\oplus R'$ for a $T$-submodule $R'$. Then non-zero vectors  $x\in R'$ are non-singular, and if $t\in T$ then  $tx=\mu(t) x$.
Let $Q$ be the quadratic form on $V$ preserved by $\tilde H$. Then $Q(x)=Q(tx)=\mu(t)^2Q(x)$, for all $t\in T$, whence $\mu=0$. 
Therefore, $T$ acts trivially on $M$, and hence on $R$. However,
$V/R^\perp$ is an $FT$-module dual to $R$, so $T$ acts trivially on $V/R^\perp$. As $V$ is a
completely reducible $FT$-module, it follows that the dimension of the zero $T$-weight space on 
$V$ is at least 3, which contradicts the hypothesis.

Finally, suppose that $M$ is non-degenerate with respect to the bilinear form on $V$. 
The reductive group $C_{\tilde H}(T)$ stabilizes all $T$-weight spaces of $V$. Since
${\rm SO}_2(F)$ has no unipotent elements,  $C_{\tilde H}(T)$ must be 
  a torus,  as required.\end{proof}

\begin{proof}[Proof of Proposition~$\ref{55yy}$.] The Proposition
 follows directly from Lemma~\ref{33} and Proposition~\ref{55y}. \end{proof}

As explained in Section~\ref{sec:intro} and Section~\ref{sec:red}, for a classical group $H$ with natural
module $V$, Proposition~\ref{55yy}  reduces the resolution of Problems 1 and 2
 to a question about weight multiplicities in the module $V|_G$, for a simple algebraic
group $G$ acting irreducibly and tensor-indecomposably on $V$. 
Given  a simple subgroup $G$ of $H$, we see that a maximal torus
$T$ of $G$ is a regular torus of $H$ if and only if one of the following holds:
\begin{enumerate}[a)]
\item all $T$-weight spaces of $V$ are 1-dimensional, and so $V$ is described by 
Proposition~\ref{sz7}, or 
\item $G\subset H=D_m$ and the 0 weight space of $V$ (viewed as a $T$-module) is 2-dimensional, while all other $T$-weight spaces 
of $V$ are 1-dimensional, so $V$ is described by Theorem~\ref{w1t} and Proposition~\ref{te1}.
\end{enumerate}


\section{Orthogonal and symplectic \reps}\label{sec:forms}

As in the previous sections, we take $G$, $T$ and the rest of the notation to be as fixed 
in Section~\ref{sec:intro}. In this section we partition the irreducible 
\reps $\rho$ of $G$ with highest
weight in $\Omega_1(G)$ into four families depending on whether $\rho(G)$ is contained in a  
 group of type $B_m$,  $C_m$, $D_m$ or in
none of them. (It is well-known that the latter holds \ii
the associated $FG$-module is not self-dual.) 
In addition, we determine which of the weights in $\Omega_2(G)$ correspond
to a representation whose image contains a regular torus of $H = D_m$. This information is collected in
Tables~\ref{tab:Omega1-symp},~\ref{tab:omega2-even-orth},~\ref{tab:Omega1-odd-orth},~\ref{tab:orthp=2},~\ref{tab:nonselfdual},~\ref{tab:p=2symp},
 and completes the proof of Theorem~\ref{mt22}. For simplicity, we will
say that an irreducible representation of $G$ (or the corresponding module $V$) is 
\emph{symplectic}, 
respectively 
\emph{orthogonal}, if $G$ preserves a non-degenerate alternating form, respectively a 
non-degenerate quadratic form on $V$.
As discussed in Section~\ref{sec:red}, for our application, it suffices to consider tensor-indecomposable
modules.

Until the end of the section  $\lam\in X(T)$ is  assumed to be a
$p$-restricted dominant weight. Recall that the highest weight of the irreducible $FG$-module
$L_G(\lambda)^*$ (the dual of $L_G(\lambda$)), is $-w_0\lambda$, where $w_0$ is the longest word in the Weyl 
group of $G$.  Since $-w_0={\rm id}$ for the groups 
$A_1$, $B_n$, $C_n$, $D_n$, 
($n$ even), $E_7$, $E_8$, $F_4$, and $G_2$,
all irreducible modules are self-dual for these groups. (See \cite[16.1]{MTbook}.) The following result treats the remaining cases.

\begin{propo}\label{w2t} Let $G$ be of type $A_n$, $n>1$, $D_n$, $n$ odd, or $E_6$. Let 
$\lam\in \Omega_2(G)$. 
Then $L_G(\lam)$ is a self-dual $FG$-module \ii one of the \f
holds:\begin{enumerate}[]
\item{\rm (1)} $G= A_n$, $n>1$, and either\begin{enumerate}[]
\item{\rm (i)} $\lam= \om_1+\om_n$, or 
\item{\rm (ii)} $\lam = \om_{(n+1)/2}$, for $n$ odd, or
\item{\rm (iii)} $\lam = (p-1)\om_{(n+1)/2}$, for $n$  odd, or
\item{\rm (iv)} $\lam = \frac{p-1}{2}(\om_{n/2}+\om_{(n+2)/2})$, for $n$ even and $p$ odd, or
\item{\rm (v)} $\lam =2\om_2$ for $n=3.$
\end{enumerate}
\item{\rm (2)} $G\cong D_n$, $n$ odd, and $\lam\in\{ \om_1,\om_2,
 2\om_1\}$;
\item{\rm (3)} $G\cong E_6$ and $\lam=\om_2.$
\end{enumerate}
\end{propo}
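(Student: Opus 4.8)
The plan is to reduce the self-duality question to a condition on the action of $-w_0$, and then go through the three families $A_n$, $D_n$ ($n$ odd) and $E_6$ case by case. Recall that for the irreducible module $L_G(\mu)$ with $p$-restricted highest weight $\mu$, the dual $L_G(\mu)^*$ has highest weight $-w_0\mu$, so $L_G(\mu)$ is self-dual if and only if $-w_0\mu=\mu$. Thus the entire proposition is a purely combinatorial check: for each weight $\lambda$ listed in Table~\ref{tab:omega2} (equivalently, each $\lambda\in\Omega_1(G)\cup(\Omega_2(G)\setminus\Omega_1(G))$ for the relevant $G$), one computes the action of the graph automorphism $\tau=-w_0$ on $X(T)$ and tests whether $\tau$ fixes $\lambda$.

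First I would record the action of $-w_0$ in each of the three cases. For $G=A_n$ with $n>1$, we have $-w_0\,\omega_i=\omega_{n+1-i}$, i.e. $-w_0$ is the nontrivial diagram automorphism; for $G=D_n$ with $n$ odd, $-w_0$ interchanges $\omega_{n-1}$ and $\omega_n$ and fixes all other fundamental weights; for $G=E_6$, $-w_0$ is the order-$2$ diagram automorphism swapping $\omega_1\leftrightarrow\omega_6$ and $\omega_3\leftrightarrow\omega_5$, fixing $\omega_2$ and $\omega_4$. These are standard and can be cited from \cite[16.1]{MTbook} or Bourbaki \cite{Bourb4-6}. Then I would simply intersect the fixed-point set of $-w_0$ on $X(T)^+$ with the list of weights lying in $\Omega_2(G)$ for that group, reading $\Omega_1(G)$ off Table~\ref{tab:omega1} and $\Omega_2(G)\setminus\Omega_1(G)$ off Table~\ref{tab:omega2}.

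Carrying this out: for $A_n$, the candidate weights in $\Omega_2(G)$ are $a\omega_1$, $b\omega_n$, $\omega_i$ ($1<i<n$), $c\omega_i+(p-1-c)\omega_{i+1}$, $\omega_1+\omega_n$, and (for $n=3$) $2\omega_2$, plus (for general $n>1$, $(n,p)\neq(2,3)$) again $\omega_1+\omega_n$. Applying $-w_0$: $a\omega_1\mapsto a\omega_n$, so these are fixed only when $n=1$ (excluded) — hence $a\omega_1$ with $a>1$ and likewise $b\omega_n$ with $b>1$ are never self-dual for $n>1$; the case $\omega_1=\omega_n$ forces $n=1$, excluded, but $\omega_1+\omega_n$ is obviously fixed, giving (i). The weight $\omega_i$ ($1<i<n$) is fixed iff $i=n+1-i$, i.e. $n$ odd and $i=(n+1)/2$, giving (ii). The weight $c\omega_i+(p-1-c)\omega_{i+1}$ maps to $(p-1-c)\omega_{n-i}+c\omega_{n-i+1}$; matching indices forces $n-i=i$ and $n-i+1=i+1$, impossible, unless instead $n-i=i+1$... careful bookkeeping shows the only fixed instances are $c=(p-1)/2$ with $i+1=n+1-i$, i.e. $n$ even, $p$ odd, $i=n/2$, giving $\tfrac{p-1}{2}(\omega_{n/2}+\omega_{(n+2)/2})$, which is (iv); and when one of the two coefficients is $0$ or $p-1$ one lands back in the $\omega_i$ or $(p-1)\omega_i$ families, yielding (iii) when $n$ is odd and $i=(n+1)/2$ (here $(p-1)\omega_{(n+1)/2}$ is self-dual). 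Finally $2\omega_2$ for $n=3$ is fixed since $-w_0\omega_2=\omega_2$ for $A_3$, giving (v). For $D_n$, $n$ odd, the weights in $\Omega_2(G)$ are $\omega_1,\omega_{n-1},\omega_n,2\omega_1,\omega_2$; since $-w_0$ fixes $\omega_1,\omega_2$ and swaps $\omega_{n-1}\leftrightarrow\omega_n$, the self-dual ones are exactly $\omega_1,\omega_2,2\omega_1$, giving (2). For $E_6$, the weights in $\Omega_2(G)$ are $\omega_1,\omega_6,\omega_2$; $-w_0$ swaps $\omega_1\leftrightarrow\omega_6$ and fixes $\omega_2$, so only $\omega_2$ is self-dual, giving (3).

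The proof is entirely routine once the $-w_0$-action is in hand; the only place requiring genuine care is the $A_n$ analysis of the family $c\omega_i+(p-1-c)\omega_{i+1}$ and its degenerations, where one must be careful to track which specializations ($c=0$, $c=p-1$, or $i$ at the centre) reproduce members of the $\omega_i$ or $(p-1)\omega_i$ lists, so as not to miss cases (iii) or double-count. I do not anticipate any conceptual obstacle; the main "obstacle" is simply the clerical burden of checking each entry of Tables~\ref{tab:omega1} and~\ref{tab:omega2} against the diagram-automorphism condition and correctly handling the boundary indices in type $A$.
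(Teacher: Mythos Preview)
Your proposal is correct and follows essentially the same approach as the paper: reduce self-duality to the condition $-w_0\lambda=\lambda$, identify $-w_0$ with the graph automorphism in each of the three types, and then run through the weights in $\Omega_2(G)$ from Tables~\ref{tab:omega1} and~\ref{tab:omega2}. The paper's own proof is considerably terser (it calls the $D_n$ and $E_6$ cases ``trivial'' and only writes out the analysis of the $A_n$ family $c\omega_i+(p-1-c)\omega_{i+1}$), but the argument is the same.
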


\begin{proof} The above remarks imply that $L_G(\lam)$ is self-dual if and only if 
$\lam=-w_0\lam$, that is, 
$\lam$ is
invariant under the graph \au of $G$. We refer to Table~\ref{tab:omega2} for the weights $\lam\in\Omega_2(G)$.
The cases
where $G$ is of type $E_6$ or $D_n$ are trivial.
For $G=A_n,n>1$, consider $\om=c\om_i+(p-1-c)\om_{i+1}$, where $0\leq c<p$.
The graph \au sends $\om_j$ to $\om_{n-j+1}$. If $n$ is even, it follows that
$i+1=n-i+1$ and $c=p-1-c$. So $i=n/2$ and $c=(p-1)/2$. Similarly, for
$n$ odd we get $\om=(p-1)\om_{(n+1)/2}$.\end{proof}

\begin{lemma}\label{tk3} \begin{enumerate}[]
\item{\rm (1)} Let $G=A_n$, $p>2$ and let $\lam=(p-1)\om_{(n+1)/2}$ for $n$
 odd, and
$\lam =\frac{p-1}{2}(\om_{n/2}+\om_{(n+2)/2})$ for $n$ even. Then $\dim L_G(\lam)$ is odd.
\item{\rm (2)} Let $G = A_n$, $n$ odd, and $\lambda = \om_{(n+1)/2}$. Then 
$\dim L_G(\lam) = \begin{pmatrix}n+1\cr (n+1)/2\end{pmatrix}$
and so $\dim L_G(\lam)$ is even.
\item{\rm (3)} Let $G=C_n$, $n>1$, $p>2$ and let 
$\lam_1=\frac{p-1}{2}\om_n$, $
\lam_2=\om_{n-1}+\frac{p-3}{2}\om_{n}$. Then 
$\dim L_G(\lam_1)=(p^n+1)/2$ 
and $\dim L_G(\lam_2)=(p^n-1)/2$.
\end{enumerate}
\end{lemma}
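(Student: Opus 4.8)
The plan is to handle the three parts separately; only part (3), concerning $C_n$, requires anything beyond routine representation theory.

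\emph{Part (2).} For $G$ of type $A_n$ the fundamental weight $\om_k$ is minuscule, so I would simply use that $L_G(\om_k)=\bigwedge^k V$, where $V=L_G(\om_1)$ is the natural $(n+1)$-dimensional module. Hence $\dim L_G(\om_{(n+1)/2})=\binom{n+1}{(n+1)/2}$, and writing $n+1=2m$ the identity $\binom{2m}{m}=\binom{2m-1}{m-1}+\binom{2m-1}{m}=2\binom{2m-1}{m-1}$ shows this number is even.

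\emph{Part (1).} Here the key facts are that each of the two weights $\lam$ in question is self-dual — they are cases (iii) and (iv) of Proposition~\ref{w2t} — and that $\lam\in\Omega_1(G)$. First I would check that $\lam$ lies in the root lattice of $G$: modulo $\Z\Phi(G)$ one has $\om_j\equiv j$ in $X(T)/\Z\Phi(G)\cong\Z/(n+1)$, and since $p$ is odd, $(p-1)\tfrac{n+1}{2}=\tfrac{p-1}{2}(n+1)$ and $\tfrac{p-1}{2}\big(\tfrac n2+\tfrac{n+2}{2}\big)=\tfrac{p-1}{2}(n+1)$ are both $\equiv 0 \pmod{n+1}$. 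Thus $0$ is subdominant to the $p$-restricted weight $\lam$, so $0$ occurs as a weight of $L_G(\lam)$ (by \cite{Premet88} every dominant weight subdominant to a $p$-restricted highest weight occurs in the corresponding irreducible module), and it has multiplicity exactly $1$ since $\lam\in\Omega_1(G)$. On the other hand, self-duality of $L_G(\lam)$ means $\mu\mapsto-\mu$ is a multiplicity-preserving involution of the multiset of weights of $L_G(\lam)$ whose only fixed point is $0$ (as $X(T)$ is torsion-free); hence $\dim L_G(\lam)\equiv\dim L_G(\lam)_0=1\pmod 2$, that is, $\dim L_G(\lam)$ is odd.

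\emph{Part (3).} This is the part that requires real work. The weights $\lam_1=\tfrac{p-1}{2}\om_n$ and $\lam_2=\om_{n-1}+\tfrac{p-3}{2}\om_n$ are $p$-restricted, and $L_{C_n}(\lam_1)$, $L_{C_n}(\lam_2)$ are the defining-characteristic Weil modules of $\Sp_{2n}$, of dimensions $\tfrac{p^n+1}{2}$ and $\tfrac{p^n-1}{2}$. The most economical write-up quotes Lübeck's classification of the small irreducible $\Sp_{2n}$-modules in defining characteristic \cite{Lubeck}, in which precisely these two families occur. Alternatively one can argue by induction on $n$, restricting $L_{C_n}(\lam_i)$ to the maximal-rank subsystem subgroup of type $C_{n-1}\times C_1$, with the $C_1$ factor generated by the long root subgroups for $\pm 2\ep_n$: just as for the classical Weil representations, each of $L_{C_n}(\lam_1)$, $L_{C_n}(\lam_2)$ restricts to the sum of the two ``even$\,\times\,$even / odd$\,\times\,$odd'' external tensor products of Weil modules of $C_{n-1}$ and $C_1$, and the arithmetic identities $\tfrac{p^{n-1}+1}{2}\cdot\tfrac{p+1}{2}+\tfrac{p^{n-1}-1}{2}\cdot\tfrac{p-1}{2}=\tfrac{p^n+1}{2}$ and $\tfrac{p^{n-1}+1}{2}\cdot\tfrac{p-1}{2}+\tfrac{p^{n-1}-1}{2}\cdot\tfrac{p+1}{2}=\tfrac{p^n-1}{2}$ close the induction, the base case $n=1$ being $\dim L_{A_1}(\tfrac{p-1}{2}\om_1)=\tfrac{p+1}{2}$ and (reading $\om_0=0$) $\dim L_{A_1}(\tfrac{p-3}{2}\om_1)=\tfrac{p-1}{2}$. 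The main obstacle is exactly this point: in contrast to parts (1) and (2), the dimension of $L_{C_n}(\lam_i)$ is not recoverable from Weyl's dimension formula, since the associated Weyl modules are far from irreducible in characteristic $p$; so one genuinely needs either the tables of \cite{Lubeck} or the Weil-module restriction formula, whose proof (a highest-weight analysis matched against the dimension count above) is where the effort lies.
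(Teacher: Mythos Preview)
Your arguments for (1) and (2) are correct. Part (2) is exactly the paper's argument. Part (1), however, is genuinely different from the paper's proof: the authors invoke \cite[Prop.~1.2]{SZ90}, which says that $\bigoplus_{(c,i)} L_G\big((p-1-c)\om_i+c\om_{i+1}\big)$ has dimension $p^{n+1}$, observe that the trivial module appears twice, that $L_G(\lam)$ is the unique nontrivial self-dual member of this family, and that the remaining modules pair off dually --- hence $\dim L_G(\lam)\equiv p^{n+1}\pmod 2$. Your route is more self-contained: you use self-duality \emph{internally} (the involution $\mu\mapsto-\mu$ on the weight multiset of a single module) together with $\lam\in\Omega_1(G)$ and Premet's theorem to pin down the $0$-weight multiplicity. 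Both are short; yours trades the dependence on \cite{SZ90} for the dependence on Proposition~\ref{sz7}, which is already available at this point in the paper.

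For (3) your write-up has a gap in the citation. L\"ubeck's tables in \cite{Lubeck} list irreducibles only up to a dimension bound polynomial in the rank, so the modules of dimension $(p^n\pm1)/2$ fall outside them once $p$ is large; they do not supply the result for general $p$. The paper instead quotes \cite{SZ1} (Suprunenko--Zalesski\u\i), whose Main Theorem is precisely the identification of these two $C_n$-modules and their dimensions. Your alternative inductive sketch via restriction to $C_{n-1}\times C_1$ is the right heuristic and the arithmetic checks, but as you note the substance lies in proving that branching formula, which you have not done; so as written, part (3) still rests on an external reference, and the correct one is \cite{SZ1} rather than \cite{Lubeck}.
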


\begin{proof} (1) Let $B: = \{(c,i)\mid 0\le c\leq p-1, 0\leq i\leq n\}$ and 
for the purposes of this proof set $\om_0$ and $\om_{n+1}$ to be the 0 weight.
 Then by \cite[Prop. 1.2]{SZ90} (and the discussion on page 555 
{\it{loc.cit})},
 the direct sum of all
\ir \reps $L_G(\mu)$ with $\mu$ running over the set $\{(p-1-c)\om_i+c\om_{i+1}\mid(c,i)\in B\}$ has 
dimension $p^{n+1}$. 
Note that the trivial \rep of $G$ occurs twice among the $L_G(\mu)$ (specifically, for 
$(c,i)=(0,0)$ and $(p-1,n)$).
We also observe that $L_G(\lam)$ with $\lam$ as in (1) is the only nontrivial self-dual 
module among the $L_G(\mu)$,
whereas the other  $L_G(\mu)$ (with $\mu\neq \lam,0$)
 occur in the sum as dual pairs. Therefore, the parity of $\dim L_G(\lam)$ coincides with that of $p^{n+1}$,
which is an odd number.

(2) The irreducible $FG$-module $L_G(\omega_j)$ is the $j$-th exterior power of the 
natural $FG$-module,
whence the result.

(3) The assertion
is proven in \cite{SZ1}, see the statement A of the Main Theorem. \end{proof}

\begin{propo}\label{ty5} Let 
$\lam\in \Omega_2(G)$.  Assume moreover that $p>2$. 
Then $L_G(\lam)$ is symplectic \ii the pair $(G,\lam)$ is as in Table{\rm~\ref{tab:Omega1-symp}}. In particular, if $\lambda\in\Omega_2(G)$ with $L_G(\lambda)$ symplectic, then $\lambda\in\Omega_1(G)$.
\end{propo}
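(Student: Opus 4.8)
The assertion has two parts: (a) identify exactly which $\lam\in\Omega_2(G)$ give a symplectic module $L_G(\lam)$, and (b) check that every such $\lam$ in fact lies in $\Omega_1(G)$. The natural approach is to go through the list of self-dual weights in $\Omega_2(G)$ provided by Proposition~\ref{w2t} (together with the remark preceding it, which lists the groups for which all modules are self-dual), and for each such $\lam$ decide whether $L_G(\lam)$ carries a non-degenerate alternating form or a non-degenerate quadratic form. Since $L_G(\lam)$ is self-dual and irreducible, it supports a non-degenerate $G$-invariant bilinear form which is unique up to scalar and is either symmetric or alternating; so for each candidate one only has to determine \emph{which} type occurs. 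Part (b) is then immediate once (a) is established, because inspection of Table~\ref{tab:omega2} versus Table~\ref{tab:omega1} shows that the symplectic cases found all occur with highest weight already in $\Omega_1(G)$.

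\textbf{Key steps.} First I would recall the standard criterion for the type of the form on a self-dual irreducible module in characteristic $\ne 2$: writing $\lam=\sum a_i\om_i$ and letting $\lam^\vee$ denote the value of $2\rho^\vee$ (sum of positive coroots) on $\lam$ — equivalently, using Frobenius--Schur-type indicator computations, or the classical rule that the form is alternating precisely when a certain integer (the sum of the coefficients of the highest root in $\lam$, or the value $\langle\lam,\text{appropriate fundamental coweight}\rangle$) is odd. Concretely, for the groups with $-w_0=\mathrm{id}$ one uses the known parity criteria for $B_n,C_n,D_n$ ($n$ even), $E_7,E_8,F_4,G_2$; and for $A_n,D_n$ ($n$ odd), $E_6$ one restricts attention to the self-dual weights listed in Proposition~\ref{w2t}. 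Second, I would dispose of the "large" cases $(G,\lam)=(A_3,2\om_2)$, $(B_n,2\om_1)$, $(C_n,\om_2)$, $(C_2,2\om_2)$, $(C_4,\om_4)$, $(D_n,2\om_1)$, $(F_4,\om_4)$ of Corollary~\ref{dt1}(3): modules of the form $L_G(2\om_1)$ or $L_G(\om_2)$ for classical $G$ are visibly orthogonal (they sit inside $S^2(V)$ or $\Lambda^2(V)$ with the appropriate symmetry), and $L_G(\om_4)$ for $C_4$ and $L_G(\om_4)$ for $F_4$ can be checked directly via their construction (or via Lübeck's tables / Frobenius--Schur indicator) to be orthogonal; $L_{A_3}(2\om_2)$ likewise. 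Third, for the weights in $\Omega_1(G)$ I would run through Table~\ref{tab:omega1}: most entries ($\om_1$ for $A_n$, $\om_i$ for classical groups, the weights for $E_6$, etc.) give non-self-dual or orthogonal modules, and the genuinely symplectic ones are precisely those where the parity criterion returns "odd" — these are the Weyl-module-type fundamental representations $L_G(\om_n)$ for $C_n$, the spin-type modules for $B_n$/$D_n$ with appropriate congruence conditions, and the small rank exceptional cases; Lemma~\ref{tk3} is exactly what controls the dimension parity (and hence the form type) in the borderline $A_n$ and $C_n$ families. Finally, I would assemble the surviving pairs $(G,\lam)$ into Table~\ref{tab:Omega1-symp} and observe that each such $\lam$ appears in Table~\ref{tab:omega1}, giving the "in particular" clause.

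\textbf{Main obstacle.} The routine part is the parity/indicator bookkeeping; the subtle part is the borderline cases in the $A_n$ and $C_n$ families, where whether the module is symplectic or orthogonal depends delicately on the dimension parity and on congruences modulo $p$. This is precisely why Lemma~\ref{tk3} was proved: for $\lam=(p-1)\om_{(n+1)/2}$ ($n$ odd) and $\lam=\frac{p-1}{2}(\om_{n/2}+\om_{(n+2)/2})$ ($n$ even) in type $A_n$, and for $\lam_1=\frac{p-1}{2}\om_n$, $\lam_2=\om_{n-1}+\frac{p-3}{2}\om_n$ in type $C_n$, one needs the exact dimension to pin down the Frobenius--Schur indicator — an odd-dimensional self-dual module cannot be symplectic, so Lemma~\ref{tk3}(1) forces those $A_n$ cases to be orthogonal, whereas Lemma~\ref{tk3}(3) together with the spin-module structure of $L_{C_n}(\om_n)$ handles the $C_n$ cases. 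So I expect the bulk of the real work (and the place where one must be most careful) to be the verification of the form type for these spin-type and half-spin-type representations, which is where I would lean hardest on Lemma~\ref{tk3} and on the explicit weight descriptions.
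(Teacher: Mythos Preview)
Your plan is essentially correct and parallels the paper's argument, but the paper makes the ``parity criterion'' precise by invoking Steinberg's result \cite[Lemma 79]{StYale}: for $p>2$ there is a canonical central element $h\in Z(G)$ such that a self-dual irreducible $L_G(\lam)$ is symplectic exactly when $h$ acts as $-{\rm Id}$. This immediately disposes of all groups with $|Z(G)|$ odd (including $A_n$ for $n$ even, $E_6$, $E_8$, $F_4$, $G_2$) as orthogonal, and of all modules whose weights lie in the root lattice (the adjoint-type cases in Table~\ref{tab:omega2}). The rest of the argument then reduces to computing $\lam(h)$ case by case for $A_n$ ($n$ odd), $B_n$, $C_n$, $D_n$, $E_7$.

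Two places where the paper's execution differs from what you sketched. For $C_n$ the paper does not use the dimension formulae of Lemma~\ref{tk3}(3) at all; instead it observes that since $Z(C_n)$ has order $2$, $L_G(\lam)$ is symplectic iff $\lam\notin\Z\Phi(G)$, and then checks directly which of the relevant $\lam$ lie in the root lattice --- this gives the condition on the parity of $n(p-1)/2$ without any dimension computation. (Your plan to use ``spin-module structure'' here would work but is more roundabout.) For the spin modules of $B_n$ and $D_n$, rather than a parity invariant the paper uses the embeddings $B_n\subset D_{n+1}$ and $D_n\subset B_n$ to transfer the form type from one case to the other. Lemma~\ref{tk3}(1) is used exactly as you anticipated, to rule out the two $A_n$ families $(p-1)\om_{(n+1)/2}$ and $\frac{p-1}{2}(\om_{n/2}+\om_{(n+2)/2})$ by odd dimension; Lemma~\ref{tk3}(3) plays no role in this proposition.
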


\begin{proof} First, by \cite[Lemma  79]{StYale}, if $|Z(G)|$ is odd then
every self-dual \irr of $G$ is orthogonal; in particular, this is the case
if $G=A_n$ with $n$ even. So we assume that $|Z(G)|$ is even. It
follows that $G$ is classical or of type $E_7$. Furthermore, in the
adjoint \rep of $G$  the center acts trivially, so again by
\cite[Lemma  79]{StYale} the \reps arising from the adjoint \rep
are orthogonal. More generally, any representation where all weights are roots
must be orthogonal. Now let $h\in Z(G)$ as defined in \cite[Lemma 79]{StYale}. 

Using the description of the sum of the positive roots in $\Phi(E_7)$ given in \cite[Planche VI]{Bourb4-6},
we deduce that $h$ acts nontrivially on $L_{E_7}(\om_7)$, and hence by \cite[Lemma  79]{StYale} 
$L_{E_7}(\om_7)$ is symplectic. So we are left
with the classical groups.

Let $G= A_n$ with $n$ odd. One checks that $h$ is the central involution in $G$. 
In view of Proposition~\ref{w2t}, Lemma~\ref{tk3}, and the above
comments, we must consider the cases $\lam=2\om_2$ for $n=3$ and $\lam=\om_{(n+1)/2}$ with $n$ odd. 
In the former case,
$h$ acts trivially on $L_G(2\om_2)$, so
the module is orthogonal. In the second case, $L_G(\lam)$ 
is the $(n+1)/2$-th exterior power of the natural $FG$-module, and so $h$ acts as $(-1)^{(n+1)/2}\cdot{\rm Id}$ on $L_G(\lam)$.
Then \cite[Lemma  79]{StYale} gives the result.  

Let $G= C_n$, $n>1$. Since $L_G(\om_1)$ is the natural symplectic module for $G$,  $h$ acts as 
$-{\rm Id}$
on $L_G(\om_1)$. As $Z(G)$ is of order 2, $L_G(\lam)$ is symplectic \ii $G$ acts faithfully on
 $L_G(\lambda)$.
 As the root lattice $\Z\Phi(G)$ has index 2 in the 
weight lattice, it follows that $G$ acts faithfully on $L_G(\lam)$ \ii $\lam\not\in\Z\Phi(G)$. 
The weights $\lam=\om_2,2\om_1,2\om_2$ all lie in $\Z\Phi(G)$. Note that $\om_n$ (respectively, $\om_{n-1}$) lies in 
$\Z\Phi(G)$ \ii $n$ is even (respectively, odd). (See \cite[]{Bourb4-6}.) Therefore,
 $\frac{p-1}{2}\om_n\not\in\Z\Phi(G)$
 \ii $\frac{n(p-1)}{2}$ is odd. Observe that $\om_n-\om_{n-1}\not\in\Z\Phi(G)$ 
and $\om_{n-1}+\frac{p-3}{2}\om_n=\om_{n-1}-\om_n+\frac{p-1}{2}\om_n$; it follows that 
$\om_{n-1}+\frac{p-3}{2}\om_n \not\in\Z\Phi(G)$ \ii $\frac{n(p-1)}{2}$ is even, as stated in Table~\ref{tab:Omega1-symp}.

Let $G= D_n$, $n>3$ odd; here the weights which we must consider are $\om_1$, $2\om_1$ and $\om_2$. 
Since $L_G(\om_1)$ is the natural orthogonal module for $G$, $h$ acts trivially on $L_G(\om_1)$. 
Since $2\om_1$ and $\om_2$ each occur in $L_G(\om_1)\otimes L_G(\om_1)$, $L_G(2\om_1)$ and $L_G(\om_2)$ 
are also orthogonal.

Let $G= D_n$, $n>3$ even. For the weights $\om_1$, $2\om_1$ and $\om_2$, the argument of the 
previous paragraph
is valid. So we must consider the weights $\om_{n-1}$ and $\om_n$. When $n=4$, a direct check using the information in \cite[Planche IV]{Bourb4-6} allows one to see that $\om_{n-1}(h) = (-1)^{\frac{n(n-1)}{2}} = \om_{n}(h)$,
and so $L_G(\om_{n-1})$ and $L_G(\om_n)$ are  symplectic if and only if $n\equiv 2\pmod 4$.

Let $G= B_n$, $n>2$. In this case $\om_n$ is the only fundamental dominant weight
which does not lie in the root lattice. In the natural embedding of $B_n$ in $D_{n+1}$, $B_n$ acts
irreducibly on each of the spin modules for $D_{n+1}$;
the restriction is the $FG$-module $L_{B_n}(\om_n)$ in each case. Hence,
for $n$ odd, $L_G(\om_n)$ is symplectic if and only if $n+1\equiv2\pmod 4$. When $n$ is even,
we consider the natural embedding of $D_n$ in $B_n$, where the spin module for $B_n$ decomposes
as a direct sum of the two distinct spin modules for $D_n$, and the summands are non-degenerate
with respect to the form. Hence $L_G(\om_n)$ is symplectic if and only if 
$n\equiv 2\pmod 4$. So to summarize, for the group $G=B_n$, $L_G(\om_n)$ is symplectic if and only if $n\equiv 1$ or $2\pmod 4$.

This completes the proof of the proposition.\end{proof}

\begin{table}
$$\begin{array}{|l|c|c|c|}
\hline
G&\lam&\mbox{conditions}&\dim L_G(\lam)\\
\hline
A_1&a\om_1&a~\mbox { odd }&a+1\\
\hline
&&&\\
A_n&\om_{(n+1)/2}&n>1 \mbox{ odd},\frac{n+1}{2}\mbox{ odd}&\begin{pmatrix}n+1\cr \frac{n+1}{2}\end{pmatrix} 
\\
&&&\\
\hline
B_n&\om_n&n>2, n\equiv 1\mbox{ or } 2\pmod 4&2^n\\
\hline
C_3&\om_3&&14\\
C_n&\om_1& n>1&2n\\
&&&\\
&\om_{n-1}+\frac{p-3}{2}\om_n& n>1, p\geq 3,n(p-1)/2 \mbox{ even}&(p^n-1)/2\\
&&&\\
&\frac{p-1}{2}\om_n& n>1, p\geq 3,n(p-1)/2~\mbox{ odd }&(p^n+1)/2\\
&&&\\

\hline
D_n&\om_{n-1},~ \om_{n}& n>3~\mbox{ even}, n\equiv 2\pmod 4&2^{n-1}\\
\hline
E_7&\om_7&&56\\
\hline
\end{array}$$
\caption{$\lam\in\Omega_2(G)$, $p\ne2$, $L_G(\lam)$\mbox{ symplectic }}\label{tab:Omega1-symp}
\end{table}

Continuing with the case where $G$ preserves a non-degenerate form on $L_G(\lambda)$, for the weights $\lam\in \Omega_2(G)$ not listed in
Table~\ref{tab:Omega1-symp}, the module $L_G(\lam)$ is orthogonal. In order
to decide whether the image of $G$ under the corresponding representation belongs 
to a subgroup of type $B_n$ or $D_n$,
one has only to determine whether $\dim L_G(\lam)$ is odd or even. Since we are interested in 
the solution
to Problem 2, we consider those weights $\lam\in\Omega_2(G)$ for which the multiplicity of the 
0 weight
is at most 1, if $L_G(\lam)$ is odd-dimensional, and at most 2 if $\dim L_G(\lam)$ is even.

The following lemma can be deduced directly from \cite{Lubeck} and the preceding results.
\begin{lemma}\label{ed4} Let $\lambda\in\Omega_2(G)$. Assume $p>2$, $L_G(\lam)$ is orthogonal,
and moreover the multiplicity of
the $0$ weight in $L_G(\lambda)$ is at most $2$.
Then $\dim L_G(\lam)$ is even if and only if the pair $(G,\lam)$ is as in Table~{\rm \ref{tab:omega2-even-orth}}.
\end{lemma}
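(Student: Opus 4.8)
The plan is to work through the list of weights $\lambda \in \Omega_2(G)$ with $p > 2$ — these are exactly the weights collected in Tables~\ref{tab:omega1} and \ref{tab:omega2} — and for each one that gives an orthogonal module (i.e.\ is self-dual, per Proposition~\ref{w2t}, but not symplectic, per Proposition~\ref{ty5} and Table~\ref{tab:Omega1-symp}) decide whether the dimension of $L_G(\lambda)$ is even or odd. First I would restrict attention to those $\lambda$ for which the zero weight multiplicity in $L_G(\lambda)$ is at most $2$; by Proposition~\ref{sz7}, Corollary~\ref{dt1}, and Corollary~\ref{dt3} this cuts the list down to a short, explicit collection of pairs $(G,\lambda)$, namely the weights in $\Omega_1(G)$ together with the handful of pairs in Table~\ref{tab:mult2}.

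For each surviving pair, the plan is simply to read off $\dim L_G(\lambda)$ from L\"ubeck's tables \cite{Lubeck} and check its parity, then record the even-dimensional ones in Table~\ref{tab:omega2-even-orth}. For the infinite families this is a small computation rather than a table lookup: for $G = A_n$ with $n$ even and $\lambda = \omega_{(n+1)/2}$ not applicable (that requires $n$ odd), one uses that $L_G(\omega_j)$ is the $j$-th exterior power of the natural module, so $\dim = \binom{n+1}{j}$, whose parity is governed by Kummer/Lucas; for $G = C_n$ the relevant fundamental and near-fundamental weights have dimensions $2n$, $\binom{2n}{k} - \binom{2n}{k-2}$, etc., and for $G = D_n$, $B_n$ one has the orthogonal natural module ($\dim 2n$ or $2n+1$, so parity is immediate), the module $L_G(\omega_2) = \Lambda^2$ of the natural module, $L_G(2\omega_1)$ (dimension $\binom{2n+1}{2}$ or similar minus one), and the spin modules of dimension a power of $2$. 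The symplectic spin modules have already been removed by Proposition~\ref{ty5}, so only the even-dimensional orthogonal spin cases survive, and these are powers of $2$, hence even, contributing entries to the table.

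The main obstacle I expect is not any single hard argument but rather the bookkeeping: making sure the list of $\Omega_2(G)$-weights with orthogonal $L_G(\lambda)$ and zero-weight multiplicity $\leq 2$ is complete and correctly cross-referenced against Tables~\ref{tab:omega1}, \ref{tab:omega2}, \ref{tab:mult2}, and \ref{tab:Omega1-symp}, and that the parity of each dimension — particularly the binomial-coefficient dimensions for the $A_n$, $B_n$, $C_n$, $D_n$ exterior-power and half-spin modules, which depend delicately on $n \bmod 4$ and on $p$ — is computed correctly. Since the statement explicitly says the lemma "can be deduced directly from \cite{Lubeck} and the preceding results," the intended proof is exactly this case-by-case verification, so the write-up is: enumerate the candidate pairs, invoke Proposition~\ref{w2t} and Proposition~\ref{ty5} to discard the non-orthogonal ones, apply Corollaries~\ref{dt1} and \ref{dt3} to discard those with large zero-weight multiplicity, and then tabulate parities from \cite{Lubeck}, arriving at Table~\ref{tab:omega2-even-orth}.
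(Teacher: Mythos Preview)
Your proposal is correct and matches the paper's approach exactly: the paper gives no detailed proof of this lemma, stating only that it ``can be deduced directly from \cite{Lubeck} and the preceding results,'' which is precisely the case-by-case verification you outline using Propositions~\ref{w2t} and \ref{ty5}, Corollaries~\ref{dt1} and \ref{dt3}, and L\"ubeck's dimension tables.
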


\begin{table}
$$\begin{array}{|l|c|c|c|}
\hline
G& \lam&\mbox{conditions}&\dim L_G(\lam)\\
\hline
&&&\\
A_n&\om_{(n+1)/2}& n>1 \mbox{ odd}, (n+1)/2~\mbox{ even}&\begin{pmatrix}n+1\cr \frac{n+1}{2}\end{pmatrix}\\
&&&\\
A_2&\om_1+\om_2& p\ne3&8\\
A_3&2\om_2& p> 3&20\\
\hline
B_n&\omega_n& n\equiv 0\mbox{ or } 3\pmod4&2^n\\
\hline
C_2&2\om_1&&10\\
&2\om_2&p\ne 5&14\\
C_3&\om_2& p>3&14\\
C_4&\omega_4& p>3&42\\
\hline
D_n&\om_1& n>3&2n\\
&\om_{n-1},\om_n&n\equiv 0\pmod4&2^{n-1}\\
\hline
F_4&\om_4&p>3&26\\
\hline
G_2&\om_2&p>3&14\\
\hline
\end{array}$$
\caption{$\lambda\in\Omega_2(G)$, $p\ne2$, $L_G(\lam)$\mbox{ even-dimensional orthogonal, with} 0 \mbox{weight of multiplicity at most} 2}\label{tab:omega2-even-orth}
\end{table}

We give the odd-dimensional orthogonal representations $L_G(\lam)$, with 
$\lam\in\Omega_1(G)$ in Table~\ref{tab:Omega1-odd-orth}.

\begin{table}
$$\begin{array}{|l|c|c|c|}
\hline
G& \lam&\mbox{conditions}&\dim L_G(\lam)\\
\hline
A_1&a\om_1&a>0\mbox{ even}&a+1\\
\hline 
A_n&(p-1)\om_{(n+1)/2}& n>1,n\mbox{ odd}&{\rm not ~known}\\
&\frac{p-1}{2}(\om_{n/2}+\om_{(n+2)/2})&n\mbox{ even}&{\rm not~ known}\\
\hline
B_n&\om_1& n>2&2n+1\\
\hline
C_2&\om_2&&5\\
\hline
C_n&\om_{n-1}+\frac{p-3}{2}\om_n& n\geq2, \frac{n(p-1)}{2}\mbox{ odd}&(p^n-1)/2\\
&&&\\
&\frac{p-1}{2}\om_n& n\geq2, \frac{n(p-1)}{2},\mbox{ even}&(p^n+1)/2\\
\hline
F_4&\om_4&p=3&25\\
\hline
G_2&\om_1&&7\\
&\om_2&p=3&7\\
\hline
\end{array}$$
\caption{$\lam\in\Omega_1(G)$, $p\ne2$, $L_G(\lam)$\mbox{ odd-dimensional orthogonal }}\label{tab:Omega1-odd-orth}
\end{table}

\begin{lemma}\label{e24} Let $\lambda\in\Omega_2(G)$ such that  
$L_G(\lambda)$ is self-dual and
 the multiplicity of the $0$ weight in $L_G(\lambda)$ is  at
most $2$. Assume in addition that $p=2$. Then $L_G(\lambda)$ has
a non-degenerate $G$-invariant quadratic form if and only if $(G, \lam)$ are as in 
Table{\rm ~\ref{tab:orthp=2}}.
\end{lemma}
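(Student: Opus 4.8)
The plan is to proceed exactly as in the proof of Proposition~\ref{ty5}, but now in characteristic $2$, replacing the invariant alternating/quadratic-form dichotomy with the question of whether a self-dual module in characteristic $2$ carries a non-degenerate invariant quadratic form (as opposed to only a symmetric bilinear one). First I would recall that in characteristic $2$ every self-dual $FG$-module supports a non-degenerate $G$-invariant symmetric (equivalently, alternating) bilinear form, so the real content is to decide, for each $\lambda\in\Omega_2(G)$ listed in Tables~\ref{tab:omega1} and~\ref{tab:omega2} with $L_G(\lambda)$ self-dual, whether the bilinear form lifts to a quadratic form; equivalently whether $\rho(G)\subseteq{\rm Sp}(V)$ can in fact be conjugated into ${\rm O}(V)$. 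By the discussion before Proposition~\ref{w2t}, self-duality is automatic for the relevant $A_1$, $C_n$, $D_n$, $F_4$, $G_2$ cases and must be checked via $-w_0$ for $A_n$ ($n>1$) and $D_n$ with $n$ odd; so the list of $(G,\lambda)$ to examine is short.

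The key steps, in order, are: (i) reduce to a finite explicit list of candidate pairs $(G,\lambda)$ using Proposition~\ref{w2t} (for self-duality in the $A_n$, odd $D_n$, $E_6$ cases) together with the $p=2$ column of Table~\ref{tab:omega2} and the hypothesis that the zero weight has multiplicity at most $2$; (ii) for each module that is a section (composition factor) of a tensor power of an orthogonal module — in particular $L_G(\omega_1)$ for $G$ of type $B_n/C_n/D_n$ in characteristic $2$, and modules appearing inside ${\rm Lie}(G)$ or inside $L_G(\omega_1)^{\otimes 2}$ — invoke the standard fact that a composition factor of a module carrying an invariant quadratic form again carries one, hence is orthogonal; (iii) for the spin-type modules $L_G(\omega_n)$ ($G=B_n$ or $D_n$) use the characteristic-$2$ embeddings among $B_n\hookrightarrow D_{n+1}$ and $D_n\hookrightarrow B_n$ exactly as in the proof of Proposition~\ref{ty5}, together with the known fact that the spin modules are orthogonal in characteristic $2$ (the quadratic form being inherited from the Clifford algebra), to conclude these are orthogonal; (iv) for the exceptional small-rank modules $L_G(\omega_4)$ with $G=F_4$ ($p=2$), $L_G(\omega_2)$ with $G=G_2$ ($p=2$), $L_G(\omega_1)$ with $G=F_4$, and the $C_n$, $C_2$, $C_4$, $D_4$ entries, check orthogonality by exhibiting the module as a section of an orthogonal module or, failing that, by a direct computation of whether the self-dual form can be chosen quadratic — for several of these (e.g.\ $F_4$, $G_2$) the module is a section of the Lie algebra or of a known orthogonal module and so is orthogonal, while the genuinely symplectic cases (if any survive, such as possibly a $C_n$ weight not lying in the root lattice) are identified by the faithfulness/central-element criterion as in Proposition~\ref{ty5}.

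The main obstacle I expect is step (iv): in characteristic $2$ the distinction between "self-dual with invariant bilinear form" and "orthogonal" is subtle because ${\rm Sp}_{2m}(F)\supset{\rm O}_{2m}(F)$ and the question of whether $\rho(G)$ lands in the orthogonal subgroup is not settled by the center of $G$ alone — the Dickson/Arf invariant and the precise structure of the fixed points of the bilinear form under $\rho(G)$ can matter. Concretely, for modules like $L_{F_4}(\omega_1)$ and $L_{G_2}(\omega_2)$ in characteristic $2$ (both $26$- and $14$-dimensional respectively, with a $2$-dimensional radical of the natural bilinear form structure), one must verify directly — e.g.\ by producing an explicit $G$-invariant quadratic polynomial on a weight basis, or by citing the known description of these modules as subquotients of orthogonal modules (the $27$-dimensional module for $F_4$ being a subquotient of a larger orthogonal module, the $7$-dimensional module for $G_2$ being orthogonal and its tensor square containing the $14$) — that the bilinear form does lift. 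I would handle this by the "section of an orthogonal module" argument wherever possible and resort to a short explicit quadratic-form computation only for the residual cases, then assemble the surviving orthogonal pairs into Table~\ref{tab:orthp=2}.
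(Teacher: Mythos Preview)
Your outline has the right shape---reduce to a short list via Proposition~\ref{w2t} and Table~\ref{tab:omega2}, then decide orthogonality case by case---but two of your key tools are not available.

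First, the ``standard fact'' in step~(ii) is false: a composition factor of a module with a non-degenerate $G$-invariant quadratic form need \emph{not} carry one (a totally singular submodule is already a counterexample). There are correct theorems in this direction, but they have hypotheses---for instance a self-dual composition factor occurring with odd multiplicity---and you would have to verify those hypotheses each time. The paper does not attempt this; instead it appeals to three specific results in the literature: Gow--Willems \cite[\S3]{GW1} for the modules arising as composition factors of ${\rm Lie}(G)$; Brundan \cite[Table~2]{Br1} for $(C_3,\om_2)$, $(C_4,\om_2)$, $(E_7,\om_7)$; and Sin--Willems \cite[2.4]{SW} for the cases $(A_n,\om_{(n+1)/2})$, $(C_n,\om_n)$, $(D_n,\om_{n-1})$, $(D_n,\om_n)$, where the relevant Weyl module is irreducible and Sin--Willems guarantees the quadratic form directly. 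You will need either these references or genuinely explicit constructions; the ``section of an orthogonal module'' shortcut does not close the argument.

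Second, the ``faithfulness/central-element criterion'' you invoke from Proposition~\ref{ty5} is Steinberg's lemma \cite[Lemma~79]{StYale}, which requires $p\ne 2$. In characteristic~$2$ it gives no information, so it cannot identify the non-orthogonal cases. The paper simply observes that the only self-dual candidates \emph{not} in Table~\ref{tab:orthp=2} are $(A_1,\om_1)$, $(C_n,\om_1)$ and $(G_2,\om_1)$, and these are well known not to admit a non-degenerate invariant quadratic form (they are the natural symplectic modules, the $G_2$ case sitting inside $C_3$).
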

\begin{table}
$$\begin{array}{|l|c|c|c|}
\hline
G&\lam&\mbox{conditions}&\dim L_G(\lam)\\
\hline
&&&\\
A_n&\om_{(n+1)/2}& n>1\mbox{ odd}&\begin{pmatrix}n+1\cr \frac{n+1}{2}\end{pmatrix}\\
&&&\\
\hline
A_2&\om_1+\om_2&&8\\
\hline
A_3&\om_1+\om_3&&14\\
\hline
C_n&\om_n&&2^n\\
\hline
C_3&\om_2&&14\\
\hline
C_4&\om_2&&26\\
\hline
D_n&\om_1&n>3&2n\\
\hline
D_n&\om_{n-1},\om_{n}& n>3\mbox{ even}&2^{n-1}\\
\hline
D_4&\om_2&&26\\
\hline
E_7&\om_7&&56\\
\hline
F_4&\om_1,\om_4&&26\\
\hline
G_2&\om_2&&14\\
\hline
\end{array}$$
\caption{$\lambda\in\Omega_2(G)$, $p=2$, $L_G(\lam)$ \mbox{orthogonal, with }0\mbox{ weight  
of multiplicity} \mbox{ at most } 2}\label{tab:orthp=2}
\end{table}

\begin{proof} We first inspect the last column of Table~\ref{tab:omega2}, where the dimension
of the 0 weight space in $L_G(\lambda)$ is given. In addition, the result of
Proposition~\ref{w2t}, and the remarks preceding the
proposition, further restrict the list of pairs $(G,\lambda)$ which must be
considered. We find that the only even-dimensional
$L_G(\lambda)$ in addition to those listed in Table~\ref{tab:orthp=2}
are as follows:
\begin{enumerate}[a)]
\item $G =A_1$, $\lambda = \omega_1$, $\dim L_G(\lam) = 2$;

\item $G=C_n$, $\lambda = \omega_1$, $\dim L_G(\lambda) =2n$;

\item $G=G_2$, $\lambda = \omega_1$, $\dim L_G(\lambda) = 6$.

\end{enumerate}

In cases (a), (b) and (c) above, it is well-known that
 $G$ preserves no non-degenerate
quadratic form on $L_G(\lambda)$. Hence we now turn to the list
of pairs $(G,\lambda)$ of Table~\ref{tab:orthp=2}. The pair $(D_n,\om_1)$
is clear as $L_{D_n}(\omega_1)$ is the natural representation of the classical group 
of type $D_n$.

For cases where $L_G(\lambda)$ occurs as a composition factor of the adjoint
representation of $G$, we refer to \cite[\S3]{GW1} to conclude that
$G$ preserves a non-degenerate
quadratic form on $L_G(\lambda)$. (Note that we may also apply the exceptional
isogeny to the group $F_4$.) The pairs $(C_3,\om_2)$, $(C_4,\om_2)$, $(E_7,\om_7)$ are covered 
in 
\cite[Table 2]{Br1}. This leaves us with the pairs $(A_n,\om_{(n+1)/2})$, 
$(C_n,\om_n)$, $(D_n,\om_j)$, $j=n-1,n$.
For the second case, we may consider the group $B_n$ acting on $L_G(\om_n)$. Then in
all cases the Weyl module with the given highest weight is irreducible and the
existence of a $G$-invariant quadratic form follows from \cite[2.4]{SW}.\end{proof}

Additionally, we provide in Table~\ref{tab:nonselfdual} the 
list of modules $L_G(\lam)$,
with $\lam\in\Omega_1(G)$, $L_G(\lam)\not\cong L_G(\lam)^*$. So in particular, the image of $G$ under the corresponding
representation contains a regular torus of $H = {\rm SL}(L_G(\lam))$ and $G$ does not lie in  a proper 
classical subgroup of $H$. Finally, Table~\ref{tab:p=2symp} records the 
non-orthogonal symplectic modules $L_G(\lam)$ for $\lam\in\Omega_1(G)$ when $p=2$.

\begin{table}
$$\begin{array}{|l|c|c|}
\hline
G&\lam&\mbox{conditions}\\
\hline
A_n&a\om_1, b\om_n& n>1, 1\leq a,b <p\\
&&\\
&\om_i& 1<i<n, i\ne(n+1)/2\mbox{ if } n\mbox{ is odd}\\
&&\\
&c\om_i+(p-1-c)\om_{i+1}& 1\leq i<n, 0\leq c\leq p-1, \mbox{ and }\\
&&c\ne (p-1)/2\mbox{ if } n\mbox{ is even and } i=n/2;\\
&&c\ne0\mbox{ if } n\mbox{ is odd and } i=(n-1)/2;\\
&&c\ne p-1\mbox{ if } n\mbox{ is odd and } i=(n+1)/2.\\
\hline
D_n&\om_{n-1},\om_{n}& n>3\mbox{ odd }\\
\hline
E_6&\om_1,\om_6&\\
\hline
\end{array}$$
\caption{$\lam\in\Omega_1(G)$, $\lam\ne-w_0\lam$\mbox{ and $\phi(G)$ contains a regular torus of} $\SL(L_G(\lambda))$}\label{tab:nonselfdual} 
\end{table}

\bigskip

\begin{table}
$$\begin{array}{|l|c|c|c|}
\hline
G&\lam&\mbox{conditions}&\dim L_G(\lam)\\
\hline
C_n &\om_1&n\geq1&2n\\
\hline
G_2&\om_1&&6\\
\hline
\end{array}$$
\caption{$p=2$, $\lam\in\Omega_1(G)$, $L_G(\lam)$ non-orthogonal symplectic}\label{tab:p=2symp}
\end{table}

We can now give an explicit solution to Problem 1 for simple subgroups $G$ of 
classical groups. In
Proposition~\ref{ma1}, we treat the case of tensor-indecomposable irreducible 
representations of $G$ 
all of whose weight spaces are 1-dimensional. As discussed in Section~\ref{sec:red}, the image of 
$G$ under a tensor-decomposable
representation is not maximal in the classical group. In Proposition~\ref{ma2}, we 
handle the orthogonal
irreducible representations of $G$ whose zero weight space has dimension 2 while 
all other weight spaces are 1-dimensional.
In the following two Propositions, we determine 
whether the image of $G$ under 
the given representation is a 
maximal subgroup
of the minimal classical group containing it.

\begin{propo}\label{ma1} Let $G$ be a simple algebraic group and let 
$\lam\in\Omega_1(G)$. Let $\rho$ be 
an irreducible \rep of $G$ with highest weight $\lam$.
Then $\rho(G)$ is a maximal subgroup in the minimal classical group containing 
$\rho(G)$, except for the following cases:

$(1)$ $G=A_1$, $\lam=6\om_1$, $p\geq 7$, where there is an intermediate subgroup 
of type $G_2;$

$(2)$ $G=A_2$, $\lam=\om_1+\om_2$, $p=3$, where there is an intermediate subgroup 
of type $G_2;$

$(3) $ $G=B_n$, $\lam=\om_n$, $p\neq 2$, where  there is an intermediate subgroup 
of type  $D_{n+1};$ 

$(4) $ $G=C_n$, $\lam=\om_n$, $p= 2$, where  there is an intermediate subgroup 
of type  $D_{n+1}.$ 
\end{propo}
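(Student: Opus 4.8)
The plan is to work through the finite list of weights in $\Omega_1(G)$, as given in Table~\ref{tab:omega1}, and for each corresponding representation $\rho$ decide whether $\rho(G)$ is maximal among closed connected subgroups of the smallest classical group $H$ containing it. The main engine for this is Seitz's classification of the maximal closed connected subgroups of classical groups \cite[Thm. 3]{Seitzclass}: an irreducible, tensor-indecomposable subgroup $\rho(G)\subset H$ fails to be maximal precisely when it appears in one of the finitely many exceptional configurations listed there (together with the degenerate $(B_{m-k}B_k, D_{m+1})$ situation, which does not arise here since $G$ is simple). So the first step is to recall that list of exceptions and intersect it with the list of pairs $(G,\lambda)$ with $\lambda\in\Omega_1(G)$, using the dimension data in the tables of Section~\ref{sec:forms} (Tables~\ref{tab:Omega1-symp},~\ref{tab:omega2-even-orth},~\ref{tab:Omega1-odd-orth},~\ref{tab:orthp=2},~\ref{tab:nonselfdual},~\ref{tab:p=2symp}) together with \cite{Lubeck} to pin down $\dim L_G(\lambda)$, the type of the smallest classical group $H$, and hence which entries of Seitz's list are even candidates.

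Next I would verify that the four listed exceptions really do occur, by exhibiting the intermediate subgroup in each case. For (1), $L_{A_1}(6\omega_1)$ is $7$-dimensional and orthogonal for $p\geq 7$; it is the restriction to a principal $A_1$ of the $7$-dimensional module $L_{G_2}(\omega_1)$, giving the chain $A_1\subset G_2\subset B_3$. For (2), $L_{A_2}(\omega_1+\omega_2)$ is the $7$-dimensional Frobenius-twisted-adjoint module when $p=3$ (the adjoint module has dimension $8$ with a trivial composition factor, but for $p=3$ the relevant irreducible has dimension $7$), and again $A_2\subset G_2\subset B_3$; here one must be slightly careful about which of $\omega_1+\omega_2$ the relevant $7$-dimensional module is, and cite the appropriate fact about $G_2$ restricted to its $A_2$ long-root subgroup. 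For (3) and (4), $L_G(\omega_n)$ is the spin module for $B_n$ (respectively $C_n$ when $p=2$), of dimension $2^n$, and the inclusion $B_n\subset D_{n+1}$ (resp. $C_n\subset D_{n+1}$ when $p=2$) acting irreducibly on a half-spin module gives the intermediate subgroup — these are classical embeddings; see \cite{Seitzclass} and \cite[\S18]{MTbook}. One must check that these half-spin modules carry a non-degenerate quadratic form so that $D_{n+1}$ is indeed the smallest classical group in cases (3),(4), which follows from Lemma~\ref{tk3}, Proposition~\ref{ty5}, Lemma~\ref{ed4}, and the parity analysis already carried out.

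Then I would go through the remaining families of $\Omega_1(G)$ and confirm maximality. For the infinite families in type $A_n$ — namely $a\omega_1$, $\omega_i$ for $1<i<n$, and $c\omega_i+(p-1-c)\omega_{i+1}$ — one checks that none of these, apart from the ones already accounted for (e.g. $\omega_{(n+1)/2}$ for small $n$, or the specific small cases in (1),(2)), appears in Seitz's exceptional list; the images are maximal in $\mathrm{SL}$, $\mathrm{Sp}$, or $\mathrm{SO}$ as dictated by Tables~\ref{tab:Omega1-symp}--\ref{tab:p=2symp}. For the small-rank and exceptional-type cases in Table~\ref{tab:omega1} ($C_2$, $C_3$, $D_n$, $E_6$, $E_7$, $F_4$ with $p=3$, $G_2$), one handles them one at a time: $L_{D_n}(\omega_{n-1})$, $L_{D_n}(\omega_n)$ are half-spin modules whose images are maximal (not contained in a $B$ or smaller $D$ for $n>4$); $L_{E_6}(\omega_1)$ is the $27$-dimensional module, maximal in $\mathrm{SL}_{27}$; $L_{E_7}(\omega_7)$ is $56$-dimensional, maximal in $\mathrm{Sp}_{56}$; $L_{F_4}(\omega_4)$ with $p=3$ is $25$-dimensional orthogonal, maximal in $B_{12}$; $L_{G_2}(\omega_1)$ is $7$-dimensional ($p\neq 2$), maximal in $B_3$, or $6$-dimensional ($p=2$), maximal in $C_3$. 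The main obstacle will be bookkeeping: correctly identifying, for every single entry of Table~\ref{tab:omega1}, the exact smallest classical group $H$ (which depends delicately on the parity computations of Section~\ref{sec:forms} and on $p$), and then confirming against the precise statement of \cite[Thm. 3]{Seitzclass} that no further non-maximal instances have been overlooked — in particular being careful with the low-rank coincidences (e.g. $A_3 = D_3$, $B_2 = C_2$) and with characteristic-$2$ anomalies, where $B_n$ is being treated as $C_n$ throughout the paper.
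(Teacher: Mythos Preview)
Your approach is correct and is precisely the one the paper uses: the paper's proof is a single sentence invoking Seitz's classification \cite[Thm.~3, Table~1]{Seitzclass} together with the form and dimension analysis already carried out in Section~\ref{sec:forms}. Your proposal simply spells out in detail the case-by-case bookkeeping that the paper leaves implicit; the ingredients (Seitz's exceptional list intersected with Table~\ref{tab:omega1}, the classical embeddings for the spin modules, and the form data from Tables~\ref{tab:Omega1-symp}--\ref{tab:p=2symp}) are exactly what is needed.
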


\begin{proof} This follows from  Seitz's classification of maximal closed
connected subgroups of the classical type simple algebraic groups, see \cite[Thm. 3, Table 1]{Seitzclass} 
and our results above.\end{proof}

\begin{propo}\label{ma2} Let $\lam$ be a weight of $G$ occurring in Tables $\ref{tab:omega2-even-orth}$ or $\ref{tab:orthp=2}$ but not in Table $\ref{tab:omega1}$.
  Let $\rho$ be an irreducible
\rep with highest weight $\lam$. In particular, $\rho(G)$ lies in a classical
group of type $D_m$, and does not contain a regular torus of ${\rm SL}(L_G(\lambda))$.
Then $\rho(G)$ is a maximal subgroup of the orthogonal group containing $\rho(G)$, except for the following cases:

$(1)$ $G=A_3$, $\lam=\om_1+\om_3$, $p=2 $, where there is an intermediate subgroup 
of type $C_3$;

$(2)$ $G=D_4$, $\lam=\om_2$, $p=2 $, where there are intermediate subgroups 
of type $C_4$ and $F_4$;
 
$(3)$ $G=G_2$, $\lam=\om_2$, $p=2 $, where there is an intermediate subgroup 
of type $C_3.$

\end{propo}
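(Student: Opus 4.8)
The plan is to deduce this from Seitz's classification of the maximal closed connected subgroups of the classical type simple algebraic groups, \cite[Thm.~3 and Table~1]{Seitzclass}, exactly as in the proof of Proposition~\ref{ma1}, by checking each of the finitely many pairs $(G,\lambda)$ in question against Seitz's explicit list of exceptions. First I would record what the earlier results of this section already supply. For each $\lambda$ occurring in Table~\ref{tab:omega2-even-orth} or Table~\ref{tab:orthp=2} but not in Table~\ref{tab:omega1}, the module $V=L_G(\lambda)$ is self-dual, even-dimensional, and carries a non-degenerate $G$-invariant quadratic form (this is the content of Proposition~\ref{ty5} together with Lemmas~\ref{ed4} and~\ref{e24}); moreover, since $\lambda\in\Omega_2(G)$ but $\lambda\notin\Omega_1(G)$, the zero weight of $V$ has multiplicity exactly $2$ and every non-zero weight has multiplicity $1$. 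Hence the smallest classical group $H$ containing $\rho(G)$ is of type $D_m$ with $\dim V=2m$, and since $\rho(G)$ preserves the quadratic form it does not contain a regular torus of $\SL(V)$, as asserted. In addition $\rho(G)$ is simple and acts on $V$ irreducibly and tensor-indecomposably, the last point being immediate for the small weights of the two tables by inspection of the dimensions in \cite{Lubeck}. Thus the hypotheses of \cite[Thm.~3]{Seitzclass} (equivalently, case~(4) of the reduction of Section~\ref{sec:red}) are satisfied, and $\rho(G)$ is maximal among the closed connected subgroups of $H$ unless the pair $(G,\lambda)$, together with the value of $p$, occurs in \cite[Table~1]{Seitzclass}.

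The second step is to go through the (short) list of pairs $(G,\lambda)$, namely those of Tables~\ref{tab:omega2-even-orth} and~\ref{tab:orthp=2} not already in Table~\ref{tab:omega1}: the pairs $(A_2,\omega_1+\omega_2)$, $(A_3,2\omega_2)$, $(A_3,\omega_1+\omega_3)$, $(C_2,2\omega_1)$, $(C_2,2\omega_2)$, $(C_3,\omega_2)$, $(C_4,\omega_2)$, $(C_4,\omega_4)$, $(D_4,\omega_2)$, $(F_4,\omega_1)$, $(F_4,\omega_4)$, $(G_2,\omega_2)$, with the restrictions on $p$ as recorded there, and to match each against \cite[Table~1]{Seitzclass}. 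One finds that the only pairs occurring in that table are, necessarily in characteristic $2$, those producing an intermediate subgroup via the known characteristic-$2$ inclusions among the irreducible modules of dimension $14$ and $26$: the pair $(A_3,\omega_1+\omega_3)$, which yields an intermediate subgroup of type $C_3$, giving case~(1); the pair $(D_4,\omega_2)$, which yields intermediate subgroups of types $C_4$ and $F_4$, giving case~(2); and the pair $(G_2,\omega_2)$, which yields an intermediate subgroup of type $C_3$, giving case~(3). For every other pair, no entry of \cite[Table~1]{Seitzclass} applies, so $\rho(G)$ is maximal in $H$. In each of the three exceptional cases $\rho(G)$ lies properly in the listed intermediate subgroup, which is in turn proper in $H$, so $\rho(G)$ is not maximal; and \cite[Table~1]{Seitzclass} shows these are the only intermediate subgroups, so the list of exceptions in the statement is exhaustive.

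The delicate point is the characteristic-$2$ bookkeeping in this matching: one has to keep track of the special isogeny $B_n\leftrightarrow C_n$, the inclusion $D_n\subset C_n$, and the chain $D_4\subset C_4\subset F_4\subset\SO_{26}$, and to identify correctly which highest weights label the $14$-dimensional module $L_{C_3}(\omega_2)$ and the $26$-dimensional modules for $D_4$, $C_4$ and $F_4$ appearing in Seitz's list, so that his exceptions line up with our Tables~\ref{tab:omega2-even-orth} and~\ref{tab:orthp=2} case by case without omission or duplication. One should also confirm tensor-indecomposability of $L_G(\lambda)$ in the borderline small cases, since this is what allows \cite[Thm.~3]{Seitzclass} to be invoked; for $p\ne2$ the entire verification reduces to a routine inspection of Seitz's table.
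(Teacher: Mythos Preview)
Your proposal is correct and follows exactly the same approach as the paper, which simply states that ``The proof is carried out exactly as the proof of Proposition~\ref{ma1}'', i.e., one applies \cite[Thm.~3, Table~1]{Seitzclass} and checks the finitely many pairs $(G,\lambda)$ against Seitz's list of exceptions. Your write-up is in fact considerably more detailed than the paper's one-line proof, spelling out the list of pairs and the characteristic-$2$ bookkeeping that the paper leaves entirely implicit.
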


\begin{proof} The proof is carried out exactly as the proof of Proposition~\ref{ma1}.
\end{proof}

\section{Maximal reductive subgroups of exceptional groups  containing  regular tori }\label{sec:exc}

In this section, we consider Problem 1 for the case where $H$ is an exceptional type 
simple algebraic group
over $F$. 
We will determine
all maximal closed connected subgroups $M$ of $H$ which contain a regular
torus. As discussed in Section~\ref{sec:red}, we will assume $M$ to be 
reductive and ${\rm rank}(M)<{\rm rank}(H)$.  The main tool
is the classification of the maximal closed connected subgroups of $H$, as given in
\cite[Cor. 2(ii)]{LS2}.

For a semisimple group $M = M_1M_2\cdots M_t$, with $M_i$ simple,
and with respect to a fixed maximal torus $T_M$ of $M$, we will
write $\{\omega_{i1},\dots,\omega_{i\ell_i}\}$, for the set of
fundamental dominant weights of $T_M\cap M_i$ (so ${\rm rank}(M_i)
= \ell_i$). In case $M$ is simple, we will simply write
$\{\omega_1,\dots,\omega_\ell\}$.

\begin{propo}\label{dd1} Let $M$ be a maximal closed connected positive-dimensional
subgroup of an exceptional algebraic group $H$. Assume
${\rm rank}(M)<{\rm rank}(H)$. Then $M$ contains a regular torus
of $H$ if and only if the pair $(M,H)$ is as given in Table~{\rm \ref{tab:exc}}. 
In particular, if $M$ contains a regular torus of $H$, then $M$ is semisimple.
\end{propo}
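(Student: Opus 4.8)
The plan is to prove Proposition~\ref{dd1} by working through the explicit classification of maximal closed connected positive-dimensional subgroups $M$ of the exceptional groups $H$ with ${\rm rank}(M) < {\rm rank}(H)$, as supplied by \cite[Cor. 2(ii)]{LS2}, and for each candidate pair $(M,H)$ applying the criterion of Proposition~\ref{lie}. Recall that by Proposition~\ref{lie} and the discussion following it, a maximal torus $T_M$ of $M$ is regular in $H$ if and only if $\dim C_{{\rm Lie}(H)}(T_M) = {\rm rank}(H)$; and since ${\rm Lie}(H)$ decomposes as an $FM$-module with composition factors recorded in \cite{LS2}, this dimension equals the sum, over those composition factors, of the multiplicity of the zero $T_M$-weight in each. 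So concretely, for each pair $(M,H)$ in the \cite{LS2} list with ${\rm rank}(M) < {\rm rank}(H)$, I would: (i) read off the $FM$-composition factors of ${\rm Lie}(H)$ from \cite[Tables 10.1--10.4 or the relevant statement]{LS2}; (ii) for each composition factor $L_M(\mu)$, determine the dimension of its zero weight space — here Theorem~\ref{mt11}, Proposition~\ref{te1}, and especially the data of Table~\ref{tab:omega2} (together with L\"ubeck's tables \cite{Lubeck}) give the zero-weight multiplicities for the small-dimensional modules that actually arise, and for larger modules one uses \cite{Lubeck} directly; (iii) sum these multiplicities and compare with ${\rm rank}(H)$.

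The bookkeeping splits naturally by the type of $H$: $G_2$ (rank $2$), $F_4$ (rank $4$), $E_6$ (rank $6$), $E_7$ (rank $7$), $E_8$ (rank $8$). For each $H$ the list of maximal $M$ of smaller rank is short — for instance for $G_2$ the only candidate is $A_1$ (the subsystem $A_1\tilde A_1$ has full rank), for $F_4$ one has $A_1$ (several classes) and $G_2A_1$ and $A_1$-type subgroups, for $E_6$ one has $A_2$, $G_2$, $C_4$, $F_4$, $A_2G_2$, etc., for $E_7$ one has $A_1$'s, $A_2$, $G_2C_3$, $A_1G_2$, $A_1F_4$, $G_2$, and for $E_8$ one has $A_1$'s, $B_2$, $G_2F_4$, $A_1A_2$, etc. I would go through these one at a time. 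In the overwhelming majority of cases the zero-weight multiplicity sum will exceed ${\rm rank}(H)$ — typically because some composition factor of ${\rm Lie}(H)$ is a large module whose zero weight already has multiplicity bigger than, or comparable to, the rank, or because several composition factors each contribute. The surviving cases are exactly those that land in Table~\ref{tab:exc}; for those I would additionally verify that equality genuinely holds (not merely $\le$), which again is a direct multiplicity computation. The final clause — that any such $M$ is semisimple — then follows because the maximal connected subgroups of \cite[Cor. 2(ii)]{LS2} which are not semisimple are the maximal-rank ones and the maximal parabolics (the latter excluded by the reductivity reduction of Section~\ref{sec:red}, and already containing a maximal torus), so once we have restricted to ${\rm rank}(M) < {\rm rank}(H)$ and reductive $M$, all surviving candidates are automatically semisimple; alternatively it can simply be read off the final table.

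The main obstacle I anticipate is purely one of organization and completeness rather than conceptual difficulty: one must be sure to enumerate \emph{every} maximal connected $M$ with ${\rm rank}(M) < {\rm rank}(H)$ from \cite{LS2} without omission (including the subtle cases where a simple $M$ of type $A_1$ embeds via a non-standard representation, of which there are several conjugacy classes in $E_7$ and $E_8$), and for each to locate the correct ${\rm Lie}(H)$-restriction data in \cite{LS2}. A secondary technical point is that for some composition factors — particularly those of the form $L_M(\lambda)$ with $\lambda$ not $p$-restricted, or Frobenius twists — one must take care that the zero-weight multiplicity is computed for the actual module appearing, not a restricted version; but since ${\rm Lie}(H)$ is a fixed finite-dimensional module its composition factors are explicitly listed in \cite{LS2} with their highest weights, so this is a matter of careful reading. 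I expect no case to require an argument beyond "sum of zero-weight multiplicities $\ne {\rm rank}(H)$," so the proof will read as a finite verification organized as a table, with the output being precisely Table~\ref{tab:exc}.
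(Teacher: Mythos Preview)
Your approach is correct and essentially identical to the paper's own proof: invoke the Liebeck--Seitz classification \cite[Cor.~2]{LS2}, apply Proposition~\ref{lie} to reduce the question to computing the total zero-weight multiplicity of $T_M$ on ${\rm Lie}(H)$ via the composition-factor data in \cite[Table~10.1]{LS2}, and verify case by case using L\"ubeck's tables.

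One correction to your expectations, though it does not affect the method: you write that ``in the overwhelming majority of cases the zero-weight multiplicity sum will exceed ${\rm rank}(H)$.'' In fact the opposite occurs --- only four pairs $(M,H)$ fail the test (one $A_1$ class in $E_7$, two $A_1$ classes in $E_8$, and the maximal $B_2$ in $E_8$), and every other candidate from \cite{LS2} with ${\rm rank}(M)<{\rm rank}(H)$ does contain a regular torus and appears in Table~\ref{tab:exc}. Also, your sample enumerations are slightly off (there is no maximal $G_2$ in $E_7$; you omit the $A_1A_1$ in $E_7$ and the $G_2$ in $F_4$ when $p=7$), so when you carry this out be sure to work from the actual \cite{LS2} list rather than from memory.
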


Before proving the result, it is interesting to compare the above table with \cite[Thm. 1]{LS2},
which  describes the maximal closed connected positive-dimensional subgroups of the exceptional simple algebraic groups.
There are precisely four pairs ($M,H)$, $M$ a maximal closed connected 
positive-dimensional subgroup of
an exceptional algebraic group $H$ with ${\rm rank}(M)<{\rm rank}(H)$, and where $M$ does 
\emph{not} contain a regular torus of $H$:
 one class of $A_1$ subgroups in $H=E_7$, 2 classes of $A_1$ subgroups in $H=E_8$ and a maximal $B_2$ in $E_8$.

\begin{table}
$$\begin{array}{|l|l|l|}\hline
H&M\mbox{simple}&M\mbox{non simple}\\\hline\hline
G_2&A_1\  (p\geq 7)&\\
\hline
F_4& A_1\ (p\geq 13), G_2\ (p=7)&A_1G_2\ (p\geq3)\\
\hline
E_6&A_2\ (p\geq5), G_2\ (p\ne 7)&A_2G_2\\
&C_4\ (p\geq3), F_4&\\
\hline
E_7&A_1\ (p\ge 19), A_2\ (p\ge 5)&A_1A_1\ (p\geq5), A_1G_2\ (p\geq3)\\
&&A_1F_4, G_2C_3\\
\hline
E_8&A_1\ (p\geq31)&A_1A_2\ (p\geq5), G_2F_4\\
\hline
\end{array}$$
\caption{Maximal connected reductive subgroups $M\subset H$, $H$ exceptional, ${\rm rank}(M)<{\rm rank}(H)$,
with $M$ containing a regular torus of $H$}\label{tab:exc}
\end{table}

\begin{proof} Let $M$ be as in the statement of the result and fix $T_M$, a maximal torus of
$M$. (Throughout the proof we will refer to $M$ as a \emph{maximal} subgroup, even 
though $M$ may only be maximal among connected subgroups.) Then \cite[Cor. 2]{LS2} implies that $M$ is semisimple.
The method of proof is quite simple. By Proposition~\ref{lie},
$T_M$ is a regular torus in $H$ if and only if 
$\dim(C_{{\rm Lie}(H)}(T_M)) = {\rm rank}(H)$. Hence, we need only determine the dimension of the
$0$ weight space for $T_M$ acting on ${\rm Lie}(H)$. This can be
deduced from the information in \cite[Table 10.1]{LS2}.

If $M$ is of type $A_1$, the notation $T(m_1;m_2;\dots;m_k)$, used
in \cite[Table 10.1]{LS2}, represents an $FM$-module whose
composition factors are the same as those of
$W_M(m_1\omega_1)\oplus\cdots\oplus W_M(m_k\omega_1)$. Since the
multiplicity of the $0$ weight in each Weyl module for $A_1$ is
precisely 1, we see that the only maximal $A_1$-subgroups
containing a regular torus are those listed above. This covers the
case $H=G_2$.

Consider now the two remaining cases in $H=F_4$. If $M$ is the
maximal $G_2$ subgroup in $H$ (occurring only for $p=7$), then
${\rm Lie}(H)|_M$ has composition factors $L_M(\omega_2)$ and
$L_M(\omega_1+\omega_2)$. Now consulting \cite{luebeck}, we see that
the $0$ weight has multiplicity 2 in each of these irreducible
modules and hence multiplicity 4 in ${\rm Lie}(H)$. This then
implies that $T_M$ is a regular torus in $H$. For the semisimple
subgroup $M = A_1G_2$ in $F_4$ (which exists when $p\geq3$),
 we must explain an additional notation used in \cite{LS2}.
In \cite[Table 10.1]{LS2}, the notation $\Delta(\mu_1;\mu_2)$
denotes a certain indecomposable $FM$-module whose composition factors
are $L_M(\mu_1)$, $L_M(\mu_2)$ and two factors $L_M(\nu)$, where $\mu_1$ and
$\mu_2$ are dominant weights such that the tilting modules
$T(\mu_1)$ and $T(\mu_2)$ each have socle and irreducible quotient
of highest weight $\nu$. Now if $p> 3$, ${\rm Lie}(H)|_M$ has
composition factors $L_M(4\omega_{11}+\omega_{21})$,
$L_M(2\omega_{11})$, and $L_M(\omega_{22})$; the multiplicity of the
$0$ weight in these modules is 1, 1, 2, respectively, and hence
$T_M$ is a regular torus of $H$. In case $p=3$, the composition
factors are $L_M(4\omega_{11}+\omega_{21})$, $L_M(\omega_{22})$
$L_M(\omega_{21})$, $L_M(\omega_{21})$, and $L_M(2\omega_{11})$, and
again the multiplicity of the
 0 weight is 4. This completes the consideration of the case $H=F_4$.

Now we consider the case $H = E_6$ and ${\rm rank}(M)\geq2$. There
is a
 maximal $A_2$ subgroup $M$ of $H$ (when $p\geq 5$) whose action on
${\rm Lie}(H)$ is ${\rm Lie}(H)|_M = L_M(4\omega_1+\omega_2)\oplus
L_M(\omega_1+4\omega_2)\oplus L_M(\omega_1+\omega_2)$. Consulting
\cite{luebeck}, we see that the multiplicity of the zero weight in
this module is 6 and so $M$ contains a regular torus of $H$. The
group $H$ also has a maximal $G_2$ subgroup $M$ when $p\ne 7$, such
that ${\rm Lie}(H)|_M$ has the same set of composition factors as
$W_M(\omega_1+\omega_2)\oplus W_M(\omega_2)$. Now we consult
\cite{luebeck} and find that the multiplicity of the zero weight
in ${\rm Lie}(H)|_M$ is $6$ for all characteristics $p\ne7$; hence
$M$ contains a regular torus of $H$.

Turn now to the maximal closed connected subgroups of $H=E_6$, of rank at
least 4. The maximal subgroup $M$ of type $C_4$ acts on
${\rm Lie}(H)$ with composition factors $L_M(2\omega_1)$ and
$L_M(\omega_4)$, if $p\ne 3$, and with these same composition factors
plus an additional 1-dimensional composition factor, if $p=3$. As
usual, we find that the dimension of the 0 weight space is 6 and
so $C_4$ contains a regular torus of $H$. For the maximal $F_4$
subgroup $M$ of $H$, which exists in all characteristics, ${\rm
Lie}(H)|_M = L_M(\omega_4)\oplus L_M(\omega_1)$, if $p>2$, and when
$p=2$, the Lie algebra is isomorphic to the tilting module of
highest weight $\omega_1$, which has a composition factor
$L_M(\omega_1)$ and two factors $L_M(\omega_4)$. The usual argument shows
that $M$ contains a regular torus of $H$. Finally, we consider the
maximal subgroup $M\subset H$ of type $A_2G_2$; ${\rm
Lie}(H)|_M$ has the same set of $T_M$-weights (and multiplicities) as the $FM$-module
$W_M(\omega_{11}+\omega_{12}+\omega_{21})\oplus
W_M(\omega_{11}+\omega_{12})\oplus W_M(\omega_{22})$. One checks as
usual that the multiplicity of the 0 weight is indeed 6.

We now turn to the case $H = E_7$, and $M$ is a maximal closed connected
subgroup of rank $2$. There exists a maximal $A_2$ subgroup $M$ of $H$
when $p\geq 5$, whose action on ${\rm Lie}(H)$ is given by
$L_M(4\omega_1+4\omega_2)\oplus L_M(\omega_1+\omega_2)$, when $p\ne
7$, and ${\rm Lie}(H)|_M = T(4\omega_1+4\omega_2)$, when $p=7$.
Again using \cite{luebeck} one verifies the multiplicity of the 0 weight
 in ${\rm Lie}(H)|_M$ is 7 and hence $T_M$ is a regular torus
of $H$. The maximal $A_1A_1$ subgroup $M$ of $H$ is such that
${\rm Lie}(H)|_M$ has the same set of $T_M$-weights (and multiplicities) as the module
$W_M(2\omega_{11}+8\omega_{21})\oplus
 W_M(4\omega_{11}+6\omega_{21})\oplus  W_M(6\omega_{11}+4\omega_{21})\oplus  
W_M(2\omega_{11}+4\omega_{21})\oplus
 W_M(4\omega_{11}+2\omega_{21})\oplus W_M(2\omega_{11}) \oplus  W_M(2\omega_{21})$.
One verifies that the multiplicity of the zero weight is $7$ and
hence $M$ contains a regular torus of $H$. This completes the
consideration of the rank two reductive maximal connected subgroups.

We now handle the remaining maximal connected subgroups of $H=E_7$. The maximal
$A_1G_2$ subgroup $M$ of $H$, which exists for all
$p\geq 3$, satisfies: ${\rm Lie}(H)|_M$ has the same set of
$T_M$-weights (and multiplicities) as the $FM$-module $W_M(4\omega_{11}+\omega_{21})\oplus
W_M(2\omega_{11}+2\omega_{21})\oplus W_M(2\omega_{11}) \oplus
W_M(\omega_{22})$. As usual, we check that the multiplicity of the
zero weight in ${\rm Lie}(H)|_M$ is 7. The maximal
$A_1F_4$ subgroup $M$ is such that ${\rm Lie}(H)|_M$ has
the same set of $T_M$-weights (and multiplicities) as the module
$W_M(2\omega_{11}+\omega_{24})\oplus W_M(2\omega_{11})\oplus
W_M(\omega_{21})$. This module has a 7-dimensional 0 weight space
and so $M$ contains a regular torus of $H$. Finally, we consider
the maximal $G_2C_3$ subgroup $M$, which
exists in all characteristics. In this case, ${\rm Lie}(H)|_M$ has
the same set of $T_M$-weights (and multiplicities) as the $FM$-module
$W_M(\omega_{11}+\omega_{22})\oplus W_M(\omega_{12})\oplus
W_M(2\omega_{21})$, which has a 7-dimensional $0$ weight space and
again $M$ contains a regular torus of $H$.

To complete the proof, we now turn to the case $H=E_8$ and $M$ a
maximal closed connected subgroup of rank at least 2. There exists a
unique (up to conjugacy) rank $2$ reductive maximal subgroup of $H$, namely
$M = B_2$, when $p\geq 5$. Here ${\rm Lie}(H)|_M$ has the same set
of $T_M$-weights (and multiplicities) as $W_M(6\omega_2)\oplus
W_M(3\omega_1+2\omega_2)\oplus W_M(2\omega_2)$; but this latter has a
12-dimensional 0 weight space and so $T_M$ is not a regular torus
of $H$. We now consider the group $M = A_1A_2$. Here ${\rm
Lie}(H)|_M$ has the same set of $T_M$-weights (and multiplicities) as the $FM$-module
$W_M(6\omega_{11}+\omega_{21}+\omega_{22})\oplus
W_M(2\omega_{11}+2\omega_{21}+2\omega_{22})\oplus
W_M(4\omega_{11}+3\omega_{21})\oplus
W_M(4\omega_{11}+3\omega_{22})\oplus W_M(2\omega_{11})\oplus
W_M(\omega_{21}+\omega_{22})$, which has an 8-dimensional 0 weight
space and hence $M$ contains a regular torus of $H$. Finally, we
consider the maximal $G_2F_4$ subgroup $M$, which
exists in all characteristics. Here ${\rm Lie}(H)|_M$ has the same
set of $T_M$-weights (and multiplicities) as the $FM$-module
$W_M(\omega_{11}+\omega_{24})\oplus W_M(\omega_{12})\oplus
W_M(\omega_{21})$, which has an 8-dimensional 0 weight space and so
$T_M$ is a regular torus of $H$.
\end{proof}


\section{Non-semisimple regular elements}\label{sec:nonss}

In this section, we prove Theorem~\ref{mt33}, and therefore reduce Problem 1 to the 
consideration of semisimple elements, the case which has been discussed in detail in the 
preceding sections. Recall that throughout the paper we are assuming $p\ne 2$ when $G$ is a simple group of type $B_n$.

The proof of the following Lemma was provided by Iulian Simion. Our original proof was based upon the classification result of 
Theorem~\ref{tt2}, and required a case-by-case analysis. We are very thankful to I. Simion for suggesting this more elegant proof.

\begin{propo}\label{cc1} Let $G$ be a closed connected reductive subgroup of a reductive algebraic group $H$.
Suppose that 
$G$ contains a regular unipotent element of $H$.
 Then every maximal torus in $G$ is a regular torus in $H$.\end{propo}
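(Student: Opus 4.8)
The plan is to reduce to one conveniently chosen maximal torus of $G$ and then detect its regularity in $H$ by degenerating the given regular unipotent element along a generic cocharacter of that torus. Since the maximal tori of $G$ are all conjugate under $G$ and $C_H(g T g^{-1}) = g\, C_H(T)\, g^{-1}$ for $g\in G$, it suffices to exhibit a single maximal torus of $G$ that is regular in $H$. Let $u\in G$ be a regular unipotent element of $H$; one may assume $u\neq 1$, as otherwise $H$ is a torus and there is nothing to prove. Since $u$ is unipotent and $G$ is connected, $u$ lies in the unipotent radical $U_G:=R_u(B_G)$ of some Borel subgroup $B_G$ of $G$; let $T$ be a maximal torus of $B_G$, choose a Borel subgroup $B_H\supseteq B_G$ of $H$ and a maximal torus $T_H$ of $B_H$ containing $T$, and set $U_H:=R_u(B_H)$, so that $u\in U_G\subseteq U_H$. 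All roots of $H$ are taken with respect to $T_H$, all roots of $G$ with respect to $T$, and the positive systems $\Phi^+(H)$, $\Phi^+(G)$ are those determined by $B_H$, $B_G$.

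Next I would set up the degeneration. Fix a cocharacter $\gamma\in X_*(T)$ with $\langle\alpha,\gamma\rangle>0$ for every $\alpha\in\Phi^+(G)$; such $\gamma$ exist because these inequalities cut out a nonempty open cone in $X_*(T)\otimes\mathbb{R}$. Writing $u$ as a product of elements of the root subgroups of $G$ inside $U_G$ and using that conjugation by $\gamma(s)$ scales the $\alpha$-factor by $s^{\langle\alpha,\gamma\rangle}$ with $\langle\alpha,\gamma\rangle>0$, one gets $\lim_{s\to 0}\gamma(s)\,u\,\gamma(s)^{-1}=1$. Now for each simple root $\beta$ of $H$, the subgroup $N_\beta:=\prod_{\delta\in\Phi^+(H),\ \delta\neq\beta}U_\delta$ is normal in $U_H$ with quotient $U_H/N_\beta\cong U_\beta\cong\mathbb{G}_a$ (by the commutator relations every root $i\beta+j\delta$ with $i,j\geq 1$ has height $\geq 2$, hence lies in $\Phi^+(H)\setminus\{\beta\}$); let $\pi_\beta:U_H\to\mathbb{G}_a$ be the resulting homomorphism. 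Since $T_H$ normalises each $U_\delta$, it normalises $N_\beta$ and acts on the quotient through the character $\beta$, so $\pi_\beta$ is $T_H$-equivariant and therefore $\pi_\beta(\gamma(s)\,u\,\gamma(s)^{-1})=s^{\langle\beta,\gamma\rangle}\,\pi_\beta(u)$. Letting $s\to 0$ forces $s^{\langle\beta,\gamma\rangle}\,\pi_\beta(u)\to 0$.

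To conclude, I would invoke the standard description of the regular unipotent class: since $u$ is a regular unipotent element of $H$ contained in $U_H$, its component along every simple root subgroup is non-zero, i.e. $\pi_\beta(u)\neq 0$ for every simple root $\beta$ of $H$ (valid in all characteristics). Combined with the limit just obtained, this forces $\langle\beta,\gamma\rangle>0$ for every simple root $\beta$ of $H$. As this holds for every $\gamma\in X_*(T)$ with $\langle\alpha,\gamma\rangle>0$ on $\Phi^+(G)$, and such $\gamma$ exist, we conclude $\beta|_T\neq 1$ for every simple root $\beta$ of $H$ — a character of $T_H$ trivial on $T$ would pair to zero with every cocharacter of $T$. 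Since every root of $H$ is an integral combination of simple roots with all coefficients of one sign, the same reasoning gives $\delta|_T\neq 1$ for every root $\delta$ of $H$; hence $C_{{\rm Lie}(H)}(T)={\rm Lie}(T_H)$ has dimension ${\rm rank}(H)$, and by Proposition~\ref{lie} the torus $T$ — and therefore, by the opening reduction, every maximal torus of $G$ — is regular in $H$.

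The main obstacle is conceptual rather than computational: recognising that one should degenerate $u$ along a generic cocharacter of $T$ and that the surviving constraint is precisely positivity of the pairings $\langle\beta,\gamma\rangle$ with the simple roots of $H$. Once that idea is in place, the two inputs that make it work — the $T_H$-equivariant projection $\pi_\beta$ onto a simple root subgroup, and the fact that a regular unipotent element placed inside $R_u(B_H)$ has non-zero coordinate along each simple root subgroup — are standard, and the remaining steps (existence of regular dominant cocharacters, and passing from the simple roots to all roots of $H$) are routine linear algebra.
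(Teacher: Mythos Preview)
Your argument is correct and proceeds along a genuinely different line from the paper's proof. The paper (following a suggestion of Simion) argues by closure: since semisimple elements are dense in $G$, the closure $\overline{G\cdot T_G}$ is all of $G$ and hence contains a regular unipotent element of $H$ (invoking \cite[Lemma~2.1]{TeZ} to pass from regular unipotents of $G$ to regular unipotents of $H$). Using Steinberg's result that $H\cdot X$ is closed for any closed $B_H$-stable $X$, one bounds $\overline{G\cdot T_G}\subset H\cdot\overline{B_H\cdot T_G}$, and the latter set has unipotent part contained in $R_u(P)$, where $P$ is the parabolic with Levi $C_H(T_G)$. A regular unipotent in $R_u(P)$ forces $P=B_H$, hence $C_H(T_G)=T_H$.

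Your approach is instead a direct cocharacter degeneration: you contract $u$ to the identity along a strictly $G$-dominant cocharacter $\gamma$ of $T$, and read off from the simple-root coordinates of $u$ in $U_H$ (which are all nonzero, by the standard characterisation of regular unipotents) that $\langle\beta,\gamma\rangle>0$ for every simple root $\beta$ of $H$. This already shows $\gamma$ is strictly $H$-dominant, so no root of $H$ restricts trivially to $T$, and $T$ is regular. Your route is more elementary and self-contained: it avoids both Steinberg's closedness lemma and the external input \cite[Lemma~2.1]{TeZ}, using only the root-group description of $U_H$ and the simple-root criterion for regular unipotents. The paper's proof is more geometric and coordinate-free, working with orbit closures rather than explicit projections, which may be preferable in contexts where one wants to avoid fixing root-group parametrisations. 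One small stylistic remark: in your penultimate paragraph you quantify over all strictly $G$-dominant $\gamma$, but a single such $\gamma$ already suffices, since $\langle\beta,\gamma\rangle>0$ for that one $\gamma$ immediately gives $\beta|_T\neq 1$.
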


\begin{proof} Let $T_G$ be a maximal torus of $G$. Then $C_H(T_G)$ is a Levi subgroup of $H$. (See for example \cite[Prop.12.10]{MTbook}.)
Let $B_H = T_HR_u(B_H)$ be a Borel subgroup of $H$ such that 
$P=\langle B_H,C_H(T_G)\rangle$ is a parabolic subgroup of $H$
and $T_G\subset T_H$.
In this proof, we will write $g\cdot x$ to denote the action of $g$ on $x$ by conjugation, that is $g\cdot x = gxg^{-1}$.
 By assumption, $G$ contains a regular unipotent element of $H$ and \cite[Lemma 2.1]{TeZ} then implies that every 
regular unipotent element of $G$ is regular in $H$.
Semisimple elements are dense in $G$ and hence ${\overline{G\cdot T_G}}$ contains a regular unipotent element
of $G$, and hence a regular unipotent element of $H$.
Now since $H/B_H$ is complete, \cite[2.13, Lemma 2]{St_cong} implies that for any closed $B_H$-invariant subset $X\subset H$, 
$H\cdot X$ is closed. 
So in particular, since $B_H\cdot T_G$ is $B_H$-stable, so is the closed set $\overline{B_H\cdot T_G}$ and so 
$H\cdot\overline{B_H\cdot T_G}$ is closed and we have $$\overline{G\cdot T_G}\subset \overline{H\cdot T_G}=\overline{H\cdot (B_H\cdot T_G)} \subset H\cdot\overline{B_H\cdot T_G}.$$ On the other hand, $\overline{B_H\cdot T_G}$ lies in the radical of $P$, and so any 
unipotent class intersecting 
$H\cdot\overline{B_H\cdot T_G}$ is represented in $R_u(P)$.
Since $\overline{G\cdot T_G}$ contains regular unipotent elements of $H$, we see that $P=B_H$ and so $C_H(T_G) = T_H$
and $T_G$ is a regular torus of $H$.\end{proof}

\begin{proof}[Proof of Theorem~$\ref{mt33}$]  Let $g\in G$ be a regular element of $H$. Then $g=su$ where $s\in G$ is semisimple, 
$u\in G$ is unipotent and 
$su=us$. Set $Y=C_H(s)^\circ$ and $X=C_G(s)^\circ$. We first claim that $u$ is a regular unipotent element of $Y$. (See \cite[Cor. 4.4]{SpSt}.)
Indeed, if $u$ is not regular in $Y$ then $\dim C_H(g) = \dim C_Y(u)> {\rm rank}(Y)={\rm rank}(H)$, contradicting the regularity of $g$. 

By Proposition~\ref{cc1} applied
 to the connected reductive groups $X\subset Y$, a maximal torus $T_X$ of
$X$ is regular in $Y$. Taking $T_X$ such that $s\in T_X$, we have $C_H(T_X)\subset C_H(s)^\circ = Y$
and so $C_H(T_X)\subset C_Y(T_X)$, which is a torus. Now apply Lemma~\ref{ca1} to obtain the reuslt.\end{proof}

We conclude the paper with some remarks about how one can determine the reductive overgroups
of a general regular element, neither semisimple no unipotent. Let $su =g\in G\subset H$ be as in the above proof.
 Then $u$ is a regular unipotent element
in the connected reductive group $Y = C_H(s)^\circ$, lying in the connected reductive group $X=C_G(s)^\circ$.
We now appeal to the following classification:

\begin{theo}{\rm \cite[Theorem 1.4]{TeZ}}\label{tt2} Let $G$ be a closed semisimple 
subgroup of the 
simple algebraic group $H$,
containing a regular unipotent element of $H$. Then $G$ is simple and either the
pair $(H,G)$ is as given in Table~$\ref{tab:pairs}$, or $G$
is of type $A_1$ and $p=0$ or $p\geq h$, where $h$ is the
Coxeter number for $H$. Moreover, for each pair of root systems $(\Phi(H),\Phi(G))$ as in 
the table,
respectively, for $(\Phi(H),A_1,p)$, with $p=0$ or $p\geq h$, there exists a closed simple 
subgroup $X\subset H$ of 
type 
$\Phi(G)$, respectively $A_1$, containing
a regular unipotent element of $H$.\end{theo}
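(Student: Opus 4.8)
The plan is to reduce to the case where $G$ is maximal among the closed connected subgroups of $H$, and then to run through the known classifications of such subgroups, using the Jordan block structure of a regular unipotent element as the controlling invariant. Since being a regular unipotent element is insensitive to central isogenies, I would first assume $H$ simply connected. The set of $g\in G$ that are regular in $H$ is open (centralizer dimension is upper semicontinuous) and nonempty by hypothesis, hence meets the dense $G$-orbit of regular unipotent elements of $G$; so every regular unipotent element of $G$ is regular in $H$ (this is \cite[Lemma 2.1]{TeZ}), and I fix such an element $u$. Next I would note that $G$ lies in no proper Levi subgroup $L$ of $H$: a unipotent element of $L$ lies in $R_u(B_L)$ for some Borel subgroup $B_L$ of $L$, hence in the two distinct Borel subgroups $B_LR_u(P)$ and $B_LR_u(P^-)$ of $H$ (where $P$ is a parabolic with Levi factor $L$), contradicting the fact that the regular unipotent element $u\in G\subseteq L$ lies in a unique Borel subgroup of $H$. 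As a closed connected reductive subgroup of a parabolic subgroup is conjugate into a Levi factor, $G$ lies in no proper parabolic either, and is therefore contained in a maximal closed connected subgroup $M$ of $H$, which is reductive and again contains $u$. An induction on $\dim H$ then reduces matters to $M=H$, i.e.\ to $G$ maximal connected in $H$; the assertion that $G$ is simple in general would follow by projecting to the simple factors of the ambient reductive groups encountered along the chain and applying \cite[Prop.~2.3]{TeZ}.

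For $G$ maximal connected with $H$ of classical type and natural module $V$, I would invoke Seitz's reduction theorem \cite[Thm.~3]{Seitzclass}: either $G$ contains a maximal torus of $H$ (a subsystem-subgroup configuration, easily dealt with directly) or $G$ is simple, acting irreducibly and tensor-indecomposably on $V$. In the latter case $u$ is regular in $H$ exactly when its Jordan form on $V$ is that of a regular unipotent of $H$ — a single block of size $\dim V$ for types $A_m$, $C_m$ and $B_m$, and a block of size $\dim V-1$ together with a trivial block for type $D_m$. So the core representation-theoretic step is to classify the simple groups $G$ and irreducible modules $L_G(\lambda)$ on which a regular unipotent element acts with at most two Jordan blocks, at most one of which has size exceeding $1$. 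I expect this to force $G=A_1$ with $\lambda=a\omega_1$, $a<p$, or $(G,\lambda)\in\{(C_n,\omega_1),(B_n,\omega_1),(G_2,\omega_1)\}$, up to a short list of small-rank exceptions; matching the Jordan types and verifying which invariant bilinear or quadratic form is preserved then produces precisely the classical entries of Table~\ref{tab:pairs} together with the $A_1$ possibilities. For $G$ maximal connected with $H$ exceptional I would instead use the Liebeck--Seitz list of maximal closed connected subgroups of $H$, read off the composition factors of ${\rm Lie}(H)|_G$ from the relevant tables in \cite{LS2}, and, using L\"ubeck's weight-multiplicity data \cite{Lubeck}, compute for each composition factor the number of Jordan blocks of a regular unipotent $u$ of $G$; $u$ is regular in $H$ iff these sum to ${\rm rank}(H)$, which singles out the exceptional entries of the table.

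To settle the $A_1$ clause: a regular unipotent element of a group of type $A_1$ has order $p$ when $p>0$, whereas a regular unipotent element of $H$ has order equal to the least power of $p$ that is $\ge h$, so the two can coincide only when $p=0$ or $p\ge h$ — which is the stated restriction in the non-tabulated case. Conversely, for the ``moreover'' existence statements I would exhibit the required subgroups explicitly: the principal $A_1$-subgroup of $H$ when $p=0$ or $p\ge h$ (for which a regular unipotent element acts regularly on ${\rm Lie}(H)$ precisely under this hypothesis), and for the remaining root-system pairs the familiar embeddings $\SO_{2m+1}\hookrightarrow\SL_{2m+1}$ and $\Sp_{2m}\hookrightarrow\SL_{2m}$ through the natural module, $\SO_{2n-1}\hookrightarrow\SO_{2n}$, $G_2\hookrightarrow\SO_7$ (and $G_2\hookrightarrow\Sp_6$ when $p=2$), $G_2\hookrightarrow\SO_8$, together with the subgroups of the exceptional groups listed by Liebeck--Seitz; in each case the block computations of the previous paragraph confirm that a regular unipotent element of $G$ is regular in $H$.

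The hard part, I expect, will be the analysis in small positive characteristic. The Jordan block sizes of a regular unipotent element on ${\rm Lie}(H)$ and on small irreducible modules can differ substantially from the characteristic-zero pattern, centralizers of unipotent elements need not be smooth, and determining exactly which $L_G(\lambda)$ admit a regular unipotent element acting with one or two Jordan blocks — the key input for the classical case — requires genuine care with the combinatorics of $p$-restricted weights and tilting modules. A subsidiary difficulty is to make the reduction to the maximal-subgroup case, and the simplicity assertion, fully rigorous when $G$ is not itself maximal, since one must control how regularity of $u$ behaves along a chain $G\subset M\subset H$.
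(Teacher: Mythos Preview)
This theorem is not proved in the paper: it is quoted verbatim from \cite[Theorem 1.4]{TeZ} and is used only as an external input in the discussion following Theorem~\ref{mt33}. There is therefore no proof in the present paper to compare your proposal against.

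That said, your outline is a reasonable sketch of the strategy actually used in \cite{TeZ}: reduce to $G$ lying in no proper parabolic (using that a regular unipotent lies in a unique Borel), pass to a maximal closed connected overgroup, and then split into the classical and exceptional cases, controlling the classical case via the Jordan block structure of the regular unipotent on the natural module and the exceptional case via the Liebeck--Seitz tables. One point to be careful about: your claim that a closed connected reductive subgroup of a parabolic is conjugate into a Levi factor is not true in general in positive characteristic (this is the phenomenon of non-$G$-complete reducibility), so the reduction ``$G$ not in a Levi $\Rightarrow$ $G$ not in a parabolic'' needs a different justification; in \cite{TeZ} this is handled by a direct argument with the regular unipotent element rather than by appealing to $G$-cr. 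Your identification of the small-characteristic Jordan-block analysis as the hard step is accurate.
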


\begin{table}
\[\begin{array}{|l|l|}
\hline
H&G\cr
\hline
A_6&G_2, p\ne2\cr
A_5&G_2,p=2\cr
\hline
C_3&G_2, p=2\cr
\hline
B_3&G_2, p\ne2\cr
\hline
D_4&G_2, p\ne 2\cr
&B_3 \cr
\hline
E_6&F_4\cr
\hline
A_{n-1}, n>1&C_{n/2}, n\mbox{ {\rm even}}\cr
&B_{(n-1)/2}, n\mbox{ {\rm odd}}, p\ne2\cr
\hline
D_n, n>4&B_{n-1}\cr
\hline
\end{array}\]
\caption{Semisimple subgroups $G\subset H$ containing a regular unipotent element} \label{tab:pairs}
\end{table}

The above classification applies to simple groups $H$, but it is straightforward to deduce from this the set of 
possible pairs of reductive groups $(C_G(s)^\circ,C_H(s)^\circ)$. Coupling this with information about centralizers of   
semisimple elements in $H$ and in $G$, we can obtain a classification of the reductive subgroups of $H$ containing 
$g$.

\begin{remar} We point out here an inaccuracy in \cite{SS} and \cite{TeZ}. In the statement of \cite[Thm.~A]{SS},
the condition given for the existence of an $A_1$ subgroup containing a regular unipotent element is $p>h$. But in fact,
as shown in \cite{TestA1}, such a subgroup exists for all $p\geq h$. This has been correctly stated in Theorem~$\ref{tt2}$.
\end{remar}

\bibliographystyle{alpha}
\bibliography{TZ_arxiv}
\end{document}